\def\subrangle#1{\stackengine{5pt}{}{$\!\scriptstyle #1$}{U}{l}{F}{F}{L}}
\newcommand{\norm}[1]{\left\lVert#1\right\rVert}
\newcommand{\T}{{\bf \Theta}}
\newcommand{\A}{\mathbb{A}}
\newcommand{\s}{\mathbb{S}}
\newcommand{\C}{\mathbb{C}}
\newcommand{\V}{\mathbb{V}}
\newcommand{\Heis}{\mathsf{H}}
\newcommand{\End}{\textnormal{End}}
\newcommand{\G}{\mathbb{G}}
\newtheorem{theorem}{Theorem}[section]
\newtheorem{theorem*}{Theorem}
\newtheorem{corollary*}[theorem*]{Corollary}
\newtheorem{corollary}[theorem]{Corollary}
\newtheorem{lemma}[theorem]{Lemma}         
\newtheorem*{lemma*}{Lemma}         
\newtheorem{proposition}[theorem]{Proposition}
\newtheorem*{proposition*}{Proposition}
\theoremstyle{definition}
\newtheorem{remark}[theorem]{Remark}
\newtheorem{definition}[theorem]{Definition}              
\numberwithin{equation}{section}
\title[Theta correspondence via $C^*$-algebras]{Theta correspondence via group $C^*$-algebras}
\author[Goffeng]{Magnus Goffeng}
\address{\normalfont{Centre for Mathematical Sciences, Lund University \\
Box 118, SE-221 00 Lund, Sweden}}
\email{magnus.goffeng@math.lth.se}
\author[Mesland]{Bram Mesland}
\address{\normalfont{Mathematisch Instituut, Universiteit Leiden \\
Postbus 9512, 2300 RA Leiden, Netherlands}}
\email{b.mesland@math.leidenuniv.nl}
\author[\c{S}eng\"un]{Mehmet Haluk \c{S}eng\"un}
\address{\normalfont{School of Mathematical and Physical Sciences\\
University of Sheffield,
Hounsfield Road, Sheffield, S3 7RH, UK}}
\email{m.sengun@sheffield.ac.uk}
\begin{document}

\begin{abstract} 
We prove that the well-known explicit construction of the local theta correspondence by Li has a simple interpretation in terms of group $C^*$-algebras. In particular, we deduce that in two standard cases where Li's method work, local theta correspondence arises from a continuous functor.  Moreover, using results from a companion paper, we treat global theta correspondence using $C^*$-algebraic methods. As a byproduct, we exhibit that Rallis inner product formula can be interpreted as a certain natural inclusion being an isometry. 
\end{abstract}

\maketitle
\tableofcontents
\allowdisplaybreaks

\section{Introduction}
Let $(G',G)$ be a \emph{dual pair}, that is, a pair of reductive subgroups of a symplectic group ${\rm Sp}(W)$ such that they are each others' centralizers. Following Roger Howe, we say that an admissible irreducible representation $\pi$ of $G$ maps to an admissible irreducible representation $\theta(\pi)$ of $G'$ if the $G'\times G$-representation $\theta(\pi) \otimes \pi$ is a quotient of a certain distinguished representation of the metaplectic cover of ${\rm Sp}(W)$ called the \emph{oscillator representation}. Theta correspondence, also known as Howe duality, is the fact that the rule $\pi \mapsto \theta(\pi)$ sets up a bijection between subsets of admissible duals of $G'$ and $G$. This correspondence can be set up both over local fields as well as globally -- over the adeles of a number field. 

Since its inception in the mid-70s, theta correspondence became a fundamental area of research in both representation theory and the theory of automorphic forms. In this paper, we will place both the local and the global theta correspondence in the context of $C^*$-algebras and Rieffel induction which opens up the study of theta correspondence potentially to new tools and techniques. 

Key to our approach is an explicit construction  of $\theta(\pi)$, discovered by Jian-Shu Li in \cite{Li-89},  for unitary representations $\pi$ of $G$ when, roughly put, $G$ was much smaller than $G'$. Using his explicit construction, Li was able to prove that in this case, all unitary representations $\pi$ of $G$ entered the theta correspondence and that $\theta(\pi)$ was also unitary. Thus the theta correspondence gives rise to an embedding of unitary duals $\widehat{G} \hookrightarrow \widehat{G'}$. Over the years, the explicit construction of Li has been extended and modified by various researchers. For example, it is known that for a general dual pair, Li's method can be used to prove that if a tempered representation $\pi$ of the smaller group enters the theta correspondence, then $\theta(\pi)$ is unitary. 

In our paper, we show that Li's method is simply an instance of the general induction procedure for representations of $C^*$-algebras introduced by Marc Rieffel \cite{Rieffel-74} in the early 1970's. We will call this {\em Rieffel induction} for convenience. 

Let briefly describe the Rieffel induction procedure, while leaving the details for later. Consider two $C^*$-algebras $A$ and $B$. Say we have a right {\em Hilbert $C^*$-module} $X$ over $B$, meaning $X$ is a right $B$-module carrying a $B$-valued inner product similar to a Hilbert space for $B=\C$. If $X$ is equipped with a left action $\alpha:A\to \End_B^*(X)$ as adjointable $B$-linear operators on $X$ then $X$ is called an $(A,B)${\em -correspondence}. Such a correspondence $X$ induces a continuous tensor product functor 
$${\rm Ind}_B^A(X) : (\pi,\mathcal{H}_\pi)\mapsto (\alpha\otimes_B 1, X\otimes_B \mathcal{H}_\pi),$$
from the representations of $B$ to the representations of $A$ via internal tensor product. 

\subsection{A blueprint} \label{blueprint}
Before we present our main results, let us give a schematic overview of how we employ Rieffel induction to capture theta correspondence. 

Recall that given a locally compact group, one can associate a $C^*$-algebra $C^*(G)$ (called the {\it maximal $C^*$-algebra of $G$}) which captures the unitary representations of $G$. By taking a suitable quotient, one obtains the $C^*$-algebra $C^*_r(G)$ (called the {\it reduced $C^*$-algebra of $G$}) that captures the unitary representations of $G$ weakly contained in the regular representation of $G$. Since we will be dealing with reductive groups exclusively, we will refer to such representations simply as \emph{tempered} representations.   

Say we are given two groups $G',G$ with an isometric action of the product group $G' \times G$ on an inner product space $X_0$. Given $\xi,\eta\in X_0$, let us denote the associated matrix coefficient function on $G$ by
$$\langle \xi,\eta\rangle\subrangle{G}:=[g\mapsto \langle \xi,g.\eta\rangle\subrangle{X_0}]$$
Assume that these matrix coefficient functions satisfy the following
\begin{itemize} \label{innerjkednad}
\item[(A1)] $ \langle \xi,\eta\rangle\subrangle{G} \in C^*(G)$,
\item[(A2)] $\langle \xi,\xi \rangle\subrangle{G} \geq 0$  
\end{itemize}
where positivity is meant in the sense of $C^*$-algebras. Then there is an associated $C^*(G)$-valued inner product in which we can complete $X_0$ to a Hilbert $C^*$-module $X$ over $C^*(G)$. The action of $G'$ on $X_0$ leads to a $(C^*(G'),C^*(G))$-correspondence structure on $X$, which in turn defines a Rieffel induction functor ${\rm Ind}_{G}^{G'}(X)$ from unitary representations of $G$ to unitary representations of $G'$. If in (A1) we assumed the matrix coefficients to land in $C^*_r(G)$ then we would end up with a functor from the tempered representations of $G$ to the unitary representations of $G'$. 

If $(\pi, \mathcal{H}_\pi)$ is a unitary representation of $G$, then ${\rm Ind}_{G}^{G'}(X,\pi)$ is a unitary representation of $G'$ afforded on the space $X\otimes_B \mathcal{H}_\pi$ with the inner-product
\begin{equation} \label{Li-resemble}
\langle \xi \otimes v , \eta \otimes w \rangle := \int_G  \langle \xi,g.\eta\rangle\subrangle{X_0}  \langle v ,g.w \rangle\subrangle{\mathcal{H}_\pi} dg
\end{equation}
for $\xi,\eta \in X_0$ and $v,w \in \mathcal{H}_\pi$. Readers who are familiar with Li's work will surely recognise that this is the integral of matrix coefficients that lies at the centre of Li's method! Assumption (A1) ensures that the integral in (\ref{Li-resemble}) converges for any $\pi$, whereas (A2) ensures that the form defined in (\ref{Li-resemble}) is positive semi-definite. These issues of convergence and positivity are key to making Li's method work.

\subsection{Main results}
The connection between local theta correspondence and Rieffel induction was first described in \cite{mesland-sengun-ER}. However, the paper \cite{mesland-sengun-ER} only treated the very special case of equal rank dual pairs over non-archimedean local fields (where one could explain various extra features that the theta correspondence enjoyed using $C^*$-algebraic Morita equivalence machinery). Our treatment in this paper is much more general in scope. Moreover, we also treat global theta correspondence using results from our recent paper \cite{GMS-24}. 

\subsubsection{Local results} 
Consider a dual pair $(G',G)$ over a local field. Let $\s$ denote the subspace of smooth vectors of the Hilbert space that affords the oscillator representation of $G'\times G$. The basic idea is to follow the blueprint provided in Section \ref{blueprint} above, taking $X_0$ to be $\s$. The key challenge is the verification of the assumptions (A1) and (A2). As we signalled already, this is directly related to Li's method. In fact, verification of assumptions (A1) and (A2) is more or less equivalent to proving that Li's method works! Indeed, we verify these assumptions in two cases where  Li's construction is known to work; one is the theta lifting of tempered irreducible representations of the smaller group in a general type I dual pair, and the other is Li's original case, that is, theta lifting of unitary irreducible representations of the smaller group in a stable range dual pair. 

The following is Theorem \ref{main-result-one} of the paper.
\begin{theorem*}
\label{thm1}
Let $(G',G)$ be a Type I dual pair over a local field with $G$ the smaller group. The completion $\T^{r}$ of the $G'\times G$-space $\s$ as a $C^*_r(G)$-Hilbert module with the inner product \eqref{innerjkednad} defines a 
$(C^*(G'),C^*_r(G))$-correspondence. The associated Rieffel induction functor ${\rm Ind}_{G}^{G'}(\T^r)$ captures the theta lifting of tempered irreducible representations of $G$. More precisely, if $\pi$ is an irreducible tempered representation of $G$ then we have an isomorphism
$$\theta(\pi^*) \simeq {\rm Ind}_{G}^{G'}(\T^r, \pi)$$
of $G'$-representations. 
\end{theorem*}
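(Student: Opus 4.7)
The plan is to follow the blueprint of Section \ref{blueprint} verbatim with $X_0 := \s$, the smooth part of the oscillator representation, and then match the resulting Rieffel induced module with Li's explicit construction of $\theta(\pi^*)$. Concretely, I would proceed in three stages: verification of conditions (A1) and (A2) with $C^*(G)$ replaced by $C^*_r(G)$, construction of the $(C^*(G'),C^*_r(G))$-correspondence structure on the completion, and identification of the induced Hilbert space with the Howe/Li lift.

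For condition (A2), the key observation is that $\s$ is the space of smooth vectors of a unitary $G$-representation, so for any $\xi \in \s$ the matrix coefficient $g \mapsto \langle \xi, g.\xi\rangle\subrangle{\s}$ is a positive definite function on $G$; such functions define positive elements of $C^*(G)$, and positivity is preserved by the canonical surjection $C^*(G)\twoheadrightarrow C^*_r(G)$. Condition (A1) is the substantive input and is precisely where the ``tempered case of Li's method'' enters. Using the known decay estimates for oscillator matrix coefficients on $G$ (valid because $G$ is the smaller member of a Type I dual pair), one checks that for $\xi,\eta\in \s$ the function $\langle\xi,\eta\rangle\subrangle{G}$ lies in the Harish-Chandra Schwartz space of $G$, hence in $C^*_r(G)$. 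This estimate is the content of the convergence half of Li's argument in the tempered setting.

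With (A1) and (A2) in hand, I would complete $\s$ in the $C^*_r(G)$-valued inner product to obtain the right Hilbert $C^*_r(G)$-module $\T^r$. Because $G'$ centralises $G$ in $\Sp(W)$, the oscillator $G'$-action on $\s$ is $G$-equivariant and therefore preserves the $C^*_r(G)$-valued inner product. Standard arguments for group $C^*$-algebras (integrating the unitary $G'$-action against $C_c(G')$ and checking adjointability against the inner product) then promote this to a nondegenerate $*$-homomorphism $\alpha\colon C^*(G')\to \End^*_{C^*_r(G)}(\T^r)$, giving the desired correspondence structure.

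Finally, to identify ${\rm Ind}_{G}^{G'}(\T^r,\pi)$ with $\theta(\pi^*)$ I would unpack the internal tensor product. By construction, $\T^r\otimes_{C^*_r(G)}\mathcal{H}_\pi$ is the Hausdorff completion of $\s \otimes_{\mathrm{alg}} \mathcal{H}_\pi$ in the sesquilinear form
\[
\langle \xi\otimes v,\eta\otimes w\rangle = \int_G \langle\xi,g.\eta\rangle\subrangle{\s}\langle v,g.w\rangle\subrangle{\mathcal{H}_\pi}\,dg,
\]
which is exactly the integral of matrix coefficients underlying Li's realisation of the theta lift. Quotienting $\s\otimes\mathcal{H}_\pi$ by the radical of this form and completing yields, by Li's theorem for tempered representations in a Type I pair, a Hilbert space carrying an irreducible unitary $G'$-representation isomorphic to $\theta(\pi^*)$ (the appearance of $\pi^*$ rather than $\pi$ comes from the conjugate-linear slot in the matrix coefficient). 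The residual $G'$-action on the completion, coming from $\alpha\otimes_B 1$ through the correspondence, agrees with the $G'$-action on Li's quotient because both are induced from the original oscillator action on $\s$.

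The main obstacle is the precise matching in the last step: one must verify that Rieffel's Hausdorff completion with respect to the $C^*_r(G)$-valued inner product produces the same quotient space as Li's direct quotient by the radical of a scalar-valued form, and that the resulting $G'$-representation is irreducible and coincides with $\theta(\pi^*)$. Showing these two null spaces agree is where the results from \cite{Li-89} and its tempered-case extensions do the real work, and where the $C^*$-algebraic framework is simply packaging a known analytic input.
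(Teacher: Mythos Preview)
Your overall architecture is exactly that of the paper: build the $C^*_r(G)$-valued inner product on $\s$, complete, integrate the commuting $G'$-action, and then identify the internal tensor product with Li's Hermitian-form quotient via a dense-subspace argument. The last step you flag as the ``main obstacle'' is in fact routine: once the scalar form on $\s\otimes V_\pi^\infty$ is shown to equal the localisation of the $C^*_r(G)$-valued form (a one-line unfolding), a standard Hilbert-module lemma (Lemma~\ref{lem: dense subspace} in the paper) gives that the completion of $(\s\otimes V_\pi^\infty)/\mathcal N_\pi$ is exactly $\T^r\otimes_{C^*_r(G)}V_\pi$, with no separate comparison of null spaces needed.

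There is, however, a genuine gap in your verification of (A2). You argue that the diagonal matrix coefficient $\phi(g)=\langle\xi,\omega(g)\xi\rangle$ is a positive definite function, hence a positive element of $C^*(G)$, and then push down to $C^*_r(G)$. Two problems: first, in the general Type~I (non--stable-range) situation the coefficients of $\omega|_G$ lie only in the Harish--Chandra Schwartz space $\mathcal S(G)\subset C^*_r(G)$ and are \emph{not} known to be in $L^1(G)$, so $\phi$ need not define an element of $C^*(G)$ at all---this is precisely why the theorem is stated for $C^*_r(G)$ and why the stable-range case (where one does have $L^1$ decay) is treated separately. Second, ``positive definite function'' means that $\phi$ determines a positive \emph{linear functional} on $C^*(G)$, which is a different statement from $\phi$ being a positive \emph{element}; the passage from one to the other is not the tautology your sentence suggests.

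The paper handles positivity differently: it tests $\langle x,x\rangle_G$ against every tempered irreducible $\pi$, computes $\langle v,\pi(\langle x,x\rangle_G)v\rangle=(x\otimes v,x\otimes v)_\pi$, and then invokes the non-negativity of Li's form in the tempered range (Harris--Li--Sun). Your positive-definiteness idea can in fact be salvaged by working directly in $C^*_r(G)$: for $f\in C_c(G)$ one checks that $\langle f,\lambda(\phi)f\rangle_{L^2(G)}=\langle\omega(f\!*\!f^*)x,x\rangle=\|\omega(f)^*x\|^2\ge 0$, which bypasses Harris--Li--Sun entirely. But as written, routing through $C^*(G)$ does not go through.
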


We also prove the analogue of the above theorem for all unitary representations and dual pairs in the \emph{stable range}, see Theorem \ref{main-result-two}. We discuss the connection of Theorem \ref{thm1} to recent work on the \emph{big theta lift} by Loke--Przebinda \cite{Loke-Przebinda}, and compare the big theta lift of $\pi$ to $\theta(\pi)$ in Subsection \ref{bigh-theta}.

\subsubsection{Global results} Theorem \ref{thm1} shows that the local theta correspondence is implemented by Rieffel induction. By using results from the companion paper \cite{GMS-24} we show that the same holds for the global theta correspondence. For a number field $F$ with adele ring $\A$, let $\s_\A$ denote the space of smooth vectors of the (global) oscillator representation of $G'(\A)\times G(\A)$. The following can be found in the paper as Theorem \ref{main-result-four}.

\begin{theorem*}
\label{thm2}
Let $(G',G)$ be a Type I dual pair over a number field $F$ with $G$ the smaller group. The local $C^*$-correspondences in the previous theorem can be bundled up to give a $( C^*(G'(\A)),C^*_r(G(\A)) )$-correspondence $\T^{r}_\A$. The associated Rieffel induction functor ${\rm Ind}_{G(\A)}^{G'(\A)}(\T^r_\A)$ that captures the theta lifting of tempered irreducible representations of $G(\A)$. More precisely, if $\pi$ is an irreducible tempered representation of $G(\A)$ then we have an isomorphism
$$\theta(\pi^*) \simeq {\rm Ind}_{G(\A)}^{G'(\A)}(\T^r_\A, \pi)$$
of $G'(\A)$-representations. 
\end{theorem*}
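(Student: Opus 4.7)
The plan is to obtain the global statement by factorizing everything into local data and invoking Theorem \ref{thm1} place-by-place, with the restricted tensor product machinery from \cite{GMS-24} doing the bookkeeping.

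First, I would construct $\T^{r}_\A$ explicitly. At each place $v$ of $F$, Theorem \ref{thm1} supplies a $(C^*(G'_v),C^*_r(G_v))$-correspondence $\T^{r}_v$ with a dense subspace $\s_v$ of smooth vectors of the local oscillator representation and the inner product \eqref{innerjkednad}. At almost every non-archimedean place the local data is unramified, so $\s_v$ contains a distinguished spherical vector $\varphi^0_v$ of unit length for the $C^*_r(G_v)$-valued inner product. I would then form the restricted tensor product $\bigotimes{}'_v\T^{r}_v$ relative to the family $\{\varphi^0_v\}$ using the construction of \cite{GMS-24}, which produces a $(C^*(G'(\A)),C^*_r(G(\A)))$-correspondence. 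Here one uses that $C^*_r(G(\A))$ is the minimal tensor product of the local reduced algebras, while the left action of $C^*(G'(\A))$ is assembled from the local left actions and the maximal completion; the relevant compatibilities were established in the companion paper. The dense subspace built from factorizable tensors $\bigotimes_v \xi_v$ with $\xi_v=\varphi^0_v$ for almost every $v$ matches the adelic Schwartz/smooth space $\s_\A$, and the global $C^*_r(G(\A))$-valued inner product on factorizable vectors becomes the tensor product of the local inner products.

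Next I would verify the compatibility of Rieffel induction with restricted tensor products. Let $\pi=\bigotimes{}'_v\pi_v$ be an irreducible tempered representation of $G(\A)$, factorized by Flath's theorem, with each $\pi_v$ an irreducible tempered representation of $G_v$ and $\pi_v$ unramified at almost every $v$. Applying the tensor-product result from \cite{GMS-24} gives
\[
{\rm Ind}_{G(\A)}^{G'(\A)}(\T^{r}_\A,\pi)\;\simeq\;\bigotimes{}'_v\,{\rm Ind}_{G_v}^{G'_v}(\T^{r}_v,\pi_v),
\]
where the restricted tensor product on the right is taken with respect to the spherical vectors induced by $\varphi^0_v$ and the unramified vectors of $\pi_v$. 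By Theorem \ref{thm1} applied at each place, each factor on the right is isomorphic to $\theta_v(\pi_v^*)$, so the whole right-hand side is $\bigotimes{}'_v\theta_v(\pi_v^*)$.

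Finally, I would identify this with the global theta lift $\theta(\pi^*)$. For type I dual pairs the global theta lift of a tempered irreducible representation factorizes as the restricted tensor product of its local theta lifts; this is the standard Howe factorization combined with the Rallis inner product formula, and in the present $C^*$-framework the isometric compatibility alluded to in the abstract is precisely what ensures the global inner product defined via theta kernels agrees with the adelic integral of matrix coefficients used to assemble $\T^{r}_\A$. Combining this with the preceding isomorphism yields the desired identification $\theta(\pi^*)\simeq {\rm Ind}_{G(\A)}^{G'(\A)}(\T^{r}_\A,\pi)$.

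The main obstacle is the construction and properties of the adelic correspondence $\T^{r}_\A$: one must carefully control the restricted tensor product of Hilbert $C^*$-modules over algebras of mixed maximal/reduced type, show that the global oscillator space embeds with dense range, and prove that Rieffel induction commutes with this tensor product on tempered factorizable inputs. The results of \cite{GMS-24} are designed for exactly this purpose; granting them, the remaining work is local (Theorem \ref{thm1}) plus the classical Howe--Rallis factorization of the global theta lift.
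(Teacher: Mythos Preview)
Your approach is essentially the paper's: construct $\T^r_\A$ as the restricted tensor product of the local $\T^r_v$ relative to the distinguished vectors $\phi_v$ of Proposition~\ref{findingfixed}, invoke the compatibility results of \cite{GMS-24} to obtain a $(C^*(G'(\A)),C^*_r(G(\A)))$-correspondence, use \cite[Proposition~4.5]{GMS-24} to factorize the induced module, and apply Theorem~\ref{thm1} place-by-place. Two points deserve correction. First, the final identification $\bigotimes'_v\theta_v(\pi_v^*)\simeq\theta(\pi^*)$ does \emph{not} require the Rallis inner product formula; it is the elementary local-global compatibility of the abstract theta correspondence (Section~\ref{loca-global-compatibility-2}), which follows directly from the factorization of the global oscillator representation. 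Invoking RIPF here is both unnecessary and somewhat circular, since the paper later presents RIPF as a \emph{consequence} of this framework. Second, you omit the exceptional case where one member of the pair is a metaplectic group: then $G'(\A)$ (or $G(\A)$) is not a restricted product of the local groups but a quotient of one, and the paper handles this separately by tensoring with $C^*(G(\A))$ over $C^*(\mathbb{G})$ or by noting that the unitary action factors through the quotient. Finally, a small precision: the distinguished vector $\phi_v$ is not of ``unit length'' in the scalar sense but satisfies $\langle\phi_v,\phi_v\rangle_{G_v}=p_v$, the projection onto $K_v$-invariants, which is the correct compatibility condition from \cite[Definition~4.1]{GMS-24}.
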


We also prove the analogue of the above theorem for all unitary representations and dual pairs in the stable range, see Theorem \ref{main-result-two}.

Finally, we present another viewpoint brought on by the result that theta correspondence is a Rieffel induction; namely a rephrasing of Rallis celebrated inner product formula \cite{Rallis-84} (see Theorem \ref{RIPF} below). Let $(\pi,V_\pi)$ is a cuspidal automorphic representation of $G(\A)$, that is, $\pi$ is an irreducible subrepresentation of the regular representation of $G(\A)$ on $L^2_0(G_F \backslash G(\A))$. Given a cuspidal automorphic form $f \in V_\pi$ and $\phi \in \s_\A$, there is an associated automorphic form $\theta_\phi(f)$ on $G'(\A)$ defined by 
$$g \mapsto \int_{G_F \backslash G(\A)} \left( \sum_{x \in X_F} \omega(gh)(\phi)(x) \right )  f(h) dh.$$
The Rallis inner product formula calculates the $L^2$-inner-product 
$$\langle \theta_{\phi_1}(f_1), \theta_{\phi_2}(f_2) \rangle$$
when it is well-defined. Here $f_2,f_2 \in V_\pi$ and $\phi_1,\phi_2 \in \s_\A$. 

We show that the Rallis inner product formula is equivalent to the following result via an argument of a few lines. See 
Proposition \ref{rallisasisom}.

\begin{corollary*}
\label{cor3}
Let $(G',G)$ be a dual pair in the stable range over a number field $F$ with $G$ the smaller group. The $G'(\A)$-equivariant map
$$
\mathcal{Z} : \T_\A \otimes_{C^*(G(\A))} V_{\pi} \longrightarrow L^2_0(G'_F \backslash G'(\A)),$$
defined by extending the rule $\mathcal{Z} (\phi \otimes f ) :=  \theta_{\phi}(f)$ with linearity and density, is an isometric embedding.
\end{corollary*}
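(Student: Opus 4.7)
The plan is to directly match the inner product on the Rieffel induced module with the $L^2$-inner product on the right, and then recognize the resulting equality as precisely the Rallis inner product formula invoked in Theorem \ref{RIPF}. On simple tensors, the $C^*(G(\A))$-balanced inner product on $\T_\A\otimes_{C^*(G(\A))}V_\pi$ produced by Rieffel induction is, by formula \eqref{Li-resemble} applied in the global setting,
\begin{equation*}
\langle \phi_1\otimes f_1,\phi_2\otimes f_2\rangle \;=\; \int_{G(\A)} \langle \phi_1,h.\phi_2\rangle\subrangle{\s_\A}\,\langle f_1,\pi(h)f_2\rangle\subrangle{V_\pi}\,dh,
\end{equation*}
which converges absolutely because (A1) holds globally in the stable range by \cite{GMS-24}. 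On the other side, the standard unfolding of $\langle \theta_{\phi_1}(f_1),\theta_{\phi_2}(f_2)\rangle_{L^2(G'_F\backslash G'(\A))}$ — plug in the definition of $\theta_{\phi_i}(f_i)$, move the sum over $X_F$ inside, collapse $G'_F\backslash G'(\A)$, and fold up the $G(\A)$-integrals against the cuspidal $f_i$ — yields exactly the same integral of a product of matrix coefficients. The equality of these two expressions is the stable-range Rallis inner product formula, i.e.\ Theorem \ref{RIPF}.

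Before concluding, I would dispatch three routine formal checks. First, well-definedness of $\mathcal{Z}$ on the $C^*(G(\A))$-balanced tensor product: the identity $\mathcal{Z}(\phi\cdot a\otimes f)=\mathcal{Z}(\phi\otimes \pi(a) f)$ for $a\in C_c(G(\A))$ follows from a change of variables $h\mapsto hh_0$ in the integral defining $\theta_\phi(f)$, and extends by density to $a\in C^*(G(\A))$. Second, $G'(\A)$-equivariance: the action $g.(\phi\otimes f)=(\omega(g)\phi)\otimes f$ on the left matches the right regular action on $L^2_0(G'_F\backslash G'(\A))$, since by construction $\theta_{\omega(g)\phi}(f)(g')=\theta_\phi(f)(g'g)$. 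Third, that the image lands in $L^2_0$: cuspidality of $\pi$ together with the stable range hypothesis is known to force $\theta_\phi(f)$ to be cuspidal.

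Once the isometry is established on algebraic simple tensors, sesquilinearity propagates it to finite sums, passage to the $C^*(G(\A))$-balanced quotient gives an isometry on $\s_\A\otimes_{C^*(G(\A))}V_\pi$, and continuity extends $\mathcal{Z}$ uniquely to the Hilbert module completion $\T_\A\otimes_{C^*(G(\A))}V_\pi$.

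The only substantive obstacle sits not in the $C^*$-algebraic formalism but in the analytic content of Theorem \ref{RIPF}, namely the absolute convergence and the unfolding step in the stable range. Everything else amounts to matching the Rieffel inner product with the right-hand side of Rallis' formula, which is the "few line" argument advertised; the corollary thereby becomes a clean reinterpretation of Rallis' identity as the statement that a canonical map between Hilbert spaces is an isometric embedding.
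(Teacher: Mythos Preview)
Your approach is essentially identical to the paper's: unwind the Rieffel inner product on $\T_\A\otimes_{C^*(G(\A))}V_\pi$ to get $\langle \pi(\Psi_{\phi_1,\phi_2})f_1,f_2\rangle$, and recognise this as one side of the Rallis inner product formula (Theorem~\ref{RIPF}), so that $\mathcal{Z}$ being an isometry is literally equivalent to Rallis' identity. The paper does exactly this in the few lines surrounding equations \eqref{LHS}--\eqref{RIPF_alt}.

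One caveat: your third ``routine check'' asserts that in the stable range $\theta_\phi(f)$ is automatically cuspidal. This is not true in general; in the Rallis tower only the \emph{first occurrence} lift is cuspidal, while higher-up (still stable-range) lifts are square-integrable but typically non-cuspidal. The paper's own Proposition~\ref{rallisasisom} accordingly works with target $L^2(G'_F\backslash G'(\A))$ and simply \emph{assumes} square-integrability of $\theta_\phi(f)$, rather than asserting cuspidality. This does not affect the isometry argument itself, but you should replace that claim with the square-integrability assumption the paper makes.
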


\subsection{Structure of the paper}
The paper is organized as follows. In Section \ref{Cstar}, we recall the relevant notions from $C^*$-algebra theory and Rieffel induction. The relevant notions from representation theory are recalled in Section \ref{sec:thetta}, where we review the oscillator representation, dual pairs, the theta correspondence and the necessary results of Li leading up to the results required to prove Theorem \ref{thm1}. 

Section \ref{sec:localda} contains the main construction of the local Hilbert $C^*$-module as phrased in Theorem \ref{thm1}. In the stable range case, we show in Subsection \ref{right-module} that a similar construction produces not only a $(C^*(G'),C^*_r(G))$-bimodule but in fact a $(C^*(G'),C^*(G))$-bimodule. In Subsection \ref{bigh-theta} we provide a novel characterization of when the big theta lift of $\pi$ coincides with $\theta(\pi)$ starting from recent work on the  big theta lift by Loke--Przebinda \cite{Loke-Przebinda}. We give a more precise description of the left action of $C^*(G')$ in Section \ref{local-Rallis}. In Theorem \ref{left-surject}, we show that over non-archimedean local fields when $(G',G)$ is an ortho-symplectic or unitary pair, the left action contains all $C^*_r(G)$-compact operators on the Hilbert module and in the stable range case, the left action contains all $C^*(G)$-compact operators. This has consequences for preserving type I representations. Finally, in Section \ref{globalmodule} we study the global situation leading up to the results summarized in Theorem \ref{thm2} and in Section \ref{sec:ralla} we study the connection to Rallis' inner product formula as described in Corollary \ref{cor3}.

\subsection{Acknowledgments}  Part of the research in this paper was carried out within the online research community ``Representation Theory and Noncommutative Geometry'' sponsored by the American Institute of Mathematics. The second author gratefully acknowledges the invaluable support of the EPSRC New Horizons grant EP/V049119/1 which provided the much needed research time to develop this project. We thank Hongyu He for a helpful correspondence and Tomasz Przebinda for his support and several very helpful conversations. 


\section{Background on $C^*$-algebras} 
\label{Cstar}
A $C^*$-algebra $A$ is an involutive Banach algebra whose involution $*$ satisfies the $C^*$-identity 
$$\norm{a^*a}=\norm{a}^{2},$$
for every $a \in A$. This identity leads to an intimate relationship between algebraic and topological properties of a $C^*$-algebra. As a result $C^*$-algebras form a rigid class of involutive Banach algebras enjoying a rich theory. 

If $\mathcal{H}$ is a Hilbert space, then the algebra $\mathbb{B}(\mathcal{H})$ of bounded operators on $\mathcal{H}$ is a $C^*$-algebra under the operator norm with taking adjoint as the involution. A representation of a $C^*$-algebra $A$ on a Hilbert space $\mathcal{H}$ is a homomorphism $A \to \mathbb{B}(\mathcal{H})$ of $C^*$-algebras. It is a foundational fact that every $C^*$-algebra admits an injective representation on a Hilbert space.

\subsection{The group $C^{*}$-algebra} Given a locally compact Hausdorff topological group $G$ with Haar measure, we let $C_c(G)$ denote the space of compactly supported functions. This is a $*$-algebra in the convolution product and involution given by
\begin{equation}
\label{eq: convolution-algebra}
f_{1}*f_{2}(t):=\int_{G}f_{1}(s)f_{2}(s^{-1}t)\mathrm{d}t,\,\, f^{*}(s):=\overline{f(s^{-1})}.
\end{equation} 
The $*$-algebra $C_{c}(G)$ is dense (in the $L^{1}$-norm) in the Banach $*$-algebra of integrable functions $L^{1}(G)$. 

It is well-known that there is a bijection between the unitary representations of $\widehat{G}$ of $G$ and the non-degenerate $*$-representations of $L^1(G)$: given a (strongly continuous) unitary representation $(\pi,V_\pi)$ of $G$ by unitary operators on a Hilbert space $V_{\pi}$, we obtain a $*$-representation of $L^1(G)$ (still denoted $\pi$) by integrating
\begin{equation}
\label{eq: integrated-form} 
\pi(f) := \int_G f(s) \pi(s) \mathrm{d}s,
\end{equation}
where $f \in L^1(G)$. The {\em (maximal) $C^*$-algebra} of $G$, denoted $C^*(G)$, is the enveloping $C^*$-algebra of $L^1(G)$, that is, we complete $L^1(G)$ with respect to the norm 
$$\norm{f}_{C^*(G)} := {\rm sup}_{\pi \in \widehat{G}} \norm{\pi(f)}.$$
The supremum is finite and $\norm{f}_{C^*(G)}\leq \norm{f}_{L^1(G)}$ since $*$-representations are contractive. As $*$-representations of $L^1(G)$ uniquely extend to $C^*(G)$, we have a bijection between unitary representations of $G$ and non-degenerate $*$-representations of $C^*(G)$. This bijection respects unitary equivalence and leads to an equivalence of categories
$${\rm URep}(G) \longrightarrow {\rm Rep}(C^*(G))$$
where ${\rm URep}(G)$ denotes the category of unitary representation of $G$.

As is well-known, the unitary dual $\widehat{G}$ can be topologised using the notion of {\em weak containment} which is based on uniform approximation of matrix coefficients on compact subsets of $G$. On the $C^*(G)$-representations side, weak containment has a very simple description: $\sigma$ is weakly contained in $\pi$ if and only if $\ker\pi$ is contained in $\ker\sigma$. The bijection between $\widehat{G}$ and the spectrum $\widehat{C^*(G)}$ of $C^*(G)$, arising from the the above categorical equivalence, is  a homeomorphism of topological spaces (see, e.g. Proposition 8.B.3 of \cite{Bekka-delaHarpe}):
$$\widehat{G} \xleftrightarrow{\text{homeom.}} \widehat{C^*(G)}.$$

We end this subsection by recalling the {\em reduced} $C^*$-algebra $C^*_r(G)$ of $G$. It is the quotient of $C^*(G)$ given by the image of the regular representation
$$\lambda : C^*(G) \longrightarrow \mathbb{B}(L^2(G))$$
obtained from integrating the regular representation of $G$ on $L^2(G)$. The reduced $C^{*}$-algebra sees all the {\em tempered} representations of $G$; these are the representations of $G$ which are weakly contained in the regular representation $\lambda$. We have a categorical equivalence 
$${\rm TRep}(G) \longrightarrow {\rm Rep}(C^*_r(G))$$
where ${\rm TRep}(G)$ denotes the category of tempered representation of $G$. Just like before, we have a homeomorphism
$$\widehat{G}_t \xleftrightarrow{\text{homeom.}} \widehat{C^*_r(G)}$$
between the sets of irreducibles on both sides.

\subsection{Hilbert $C^{*}$-modules} \label{Hilbert_modules}
In his foundational paper \cite{Rieffel-74}, Rieffel introduced a notion of induction for representations of $C^*$-algebras. In the case of $C^*$-algebras of groups, this induction theory captures and generalises Mackey's theory of induction for representations of groups. We will quickly summarise this theory in the setting of group $C^*$-algebras. All details can be found in the first sections of \cite{Raeburn-Williams}.

\begin{definition}
\label{def: Hilbert-module}
Let $B$ be a $C^*$-algebra. A \emph{Hilbert $C^{*}$-module} over $B$ is a complex vector space $X$ that is a right $B$-module equipped with a map $\langle {\cdot}, {\cdot} \rangle : X {\times} X \to B$ such that for $x,y\in X$, $b\in B$ and $\lambda,\mu\in\mathbb{C}$ we have
\begin{enumerate}
\item[(i)] $\langle x, \lambda y + \mu z \rangle = \lambda \langle x, y \rangle + \mu \langle x, z \rangle$,
\item[(ii)] $\langle x, y{\cdot}b \rangle = \langle x, y \rangle b$,
\item [(iii)] $\langle x,y \rangle^* = \langle y,x \rangle$, 
\item[(iv)] $\langle x,x \rangle =0$ if and only if $x=0$,
\item[(v)] $\langle x,x \rangle \geq 0$,
\item[(vi)] $X$ is complete with respect to the norm $||x||:=||\langle x,x \rangle ||^{1/2}$. 
\end{enumerate}
for all $x,y,z \in X$, $\lambda, \mu \in \C$ and $b \in B$. We say that $X$ is {\em full} if $\langle X,X \rangle$ spans a dense subspace in $B$.
\end{definition}
The following Lemma is useful in order to construct $C^{*}$-modules in practice.
\begin{lemma}[\cite{Raeburn-Williams}, \textrm{Lemma 2.16}] 
\label{lem: pre-inner-product}
Let $B$ be a $C^{*}$-algebra, $\mathcal{B}\subset B$ a dense $*$-subalgebra and $\mathcal{X}$ a right $\mathcal{B}$-module. Suppose there is a pairing
\[\mathcal{X}\times\mathcal{X}\to \mathcal{B}, \quad (x,y)\mapsto \langle x,y\rangle\]
satisfying conditions $(i)-(iii)$ of Definition \ref{def: Hilbert-module} and let
\[\mathcal{N}:=\left\{x\in\mathcal{X}:\langle x,x\rangle=0\right\}.\]
If for all $x\in\mathcal{X}$ we have that $\langle x,x\rangle\geq 0$ in the $C^{*}$-algebra $B$, then $\|x\|:=\|\langle x,x\rangle\|_{B}^{1/2}$ is a norm on $\mathcal{X}/\mathcal{N}$ and the completion $X$ of $\mathcal{X}/\mathcal{N}$ in this norm is a Hilbert $C^{*}$-module over $B$.
\end{lemma}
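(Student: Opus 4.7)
The plan is to follow the standard construction of a Hilbert $C^*$-module from a pre-inner product: first establish a $C^*$-algebra-valued Cauchy--Schwarz inequality, then pass the whole structure to the completion by continuity. Concretely, I would begin by proving that for all $x,y \in \mathcal{X}$,
\[
\langle x,y\rangle^{*}\langle x,y\rangle \;\le\; \|\langle x,x\rangle\|_{B}\,\langle y,y\rangle \quad \text{in } B.
\]
The route is the familiar quadratic trick: for a real parameter $t$, expand
\[
0 \;\le\; \bigl\langle x\cdot(t\langle x,y\rangle) - y,\; x\cdot(t\langle x,y\rangle) - y\bigr\rangle
\]
using axioms (i)--(iii) -- note that $t\langle x,y\rangle\in\mathcal{B}$, so the module action is defined -- dominate $\langle x,x\rangle \le \|\langle x,x\rangle\|_{B}\cdot 1_{B}$ inside $B$, and optimize at $t=\|\langle x,x\rangle\|_{B}^{-1}$ (treating $\langle x,x\rangle=0$ separately). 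Taking norms then yields the scalar bound $\|\langle x,y\rangle\|_{B}^{2}\le \|\langle x,x\rangle\|_{B}\,\|\langle y,y\rangle\|_{B}$.

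Everything else is essentially routine bookkeeping. The Cauchy--Schwarz inequality forces $\langle x,y\rangle = 0$ whenever $x\in\mathcal{N}$, so $\mathcal{N}$ is a $\mathcal{B}$-submodule and the pairing descends well-defined to $\mathcal{X}/\mathcal{N}$. Homogeneity and definiteness of $\|x\|=\|\langle x,x\rangle\|_{B}^{1/2}$ on the quotient are built in, while the triangle inequality falls out of expanding $\langle x+y, x+y\rangle$, dominating the cross terms via Cauchy--Schwarz, and invoking the triangle inequality in $B$. The module-norm estimate $\|x\cdot b\|\le \|x\|\,\|b\|_{B}$ for $b\in\mathcal{B}$ follows from $b^{*}\langle x,x\rangle b \le \|\langle x,x\rangle\|_{B}\,b^{*}b$ combined with the $C^{*}$-identity in $B$.

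These continuity estimates guarantee that the right action of $\mathcal{B}$ on $\mathcal{X}/\mathcal{N}$ and the pairing $\langle\cdot,\cdot\rangle$ are jointly continuous with respect to the norm just constructed, so both extend by density to the Banach space completion $X$. Density of $\mathcal{B}\subset B$ further extends the action to a contractive right action of $B$. Axioms (i)--(iii) and (v) of Definition~\ref{def: Hilbert-module} persist under these continuous extensions (positivity survives because the positive cone of $B$ is norm closed), axiom (iv) is forced by the quotient-plus-completion construction, and (vi) is automatic. The main obstacle is the Cauchy--Schwarz step, executed with care that the intermediate parameters $t\langle x,y\rangle$ remain in $\mathcal{B}$ rather than merely in $B$; once that is in hand the lemma reduces to a density argument.
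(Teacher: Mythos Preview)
The paper does not give its own proof of this lemma; it simply quotes it from Raeburn--Williams, Lemma~2.16. Your proposal is correct and is precisely the standard argument one finds there (or in Lance): the $C^{*}$-valued Cauchy--Schwarz inequality via the quadratic expansion, followed by passage to the quotient and completion by continuity.
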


Now let $X$ be a Hilbert $C^{*}$-module over $B$. We let $\End_B ^*(X)$ denote the algebra of $B$-module endomorphisms $T:X\to X$ that are \emph{adjointable} in the sense that there exists $T^{*}:X\to X$ such that $\langle Tx,y\rangle=\langle x,T^{*}y\rangle$.  The vector space $\End_B^*(X)$ is a (proper) subspace of the space of bounded $B$-module endomorphisms and is a $C^*$-algebra under the operator norm that one derives from that on $X$. We denote by
\[\mathcal{U}(X)=\mathcal{U}(\End_B ^*(X)):=\{u\in \End_B ^*(X): u^{*}u=uu^{*}=1\},\]
the unitary group of $\End_B ^*(X)$. 

Given $x,y \in X$, we have the {\em rank one operator} $T_{x,y}$ defined by $T_{x,y}(z)=x \langle y,z\rangle$ for every $z \in X$. The linear span of the rank one operators form a two-sided ideal of $\End_B ^*(X)$ and the norm closure of this ideal is a $C^{*}$-algebra known as the ideal of {\em $B$-compact operators}\footnote{Note that these operators are not compact in the sense of Banach space theory.}, denoted $\mathbb{K}_B(X)$.
 
\subsection{Internal tensor product} \label{C*-correspondence}  
Let $X$ be a Hilbert $C^{*}$-module over $B$ and $\pi : B \to \mathbb{B}(V_{\pi})$ a $*$-representation of $B$ as bounded operators on a Hilbert space $V_{\pi}$. Consider the algebraic tensor product of vector spaces $X \otimes^{\mathrm{alg}} V_{\pi}$. By \cite[Proposition 4.5]{Lance} the right sesquilinear form 
\begin{equation} \label{localisation-inner-product} \langle x \otimes v, x' \otimes v'\rangle := \langle v, \pi(\langle x,x' \rangle)v' \rangle\subrangle{V_\pi},
\end{equation}
is positive and its radical 
$$N_{\pi}:=\left\{\xi \in X \otimes^{\mathrm{alg}} V_{\pi} \mid \langle \xi, \xi \rangle = 0 \right\}$$
is equal to the \emph{balancing subspace}
\begin{equation}
\label{balancing-ideal}
I_{\pi}:=\mathrm{span}\left\{xb \otimes v - x\otimes \pi(b)(v):x \in X, v \in V_{\pi}, b \in B\right\}.
\end{equation}
The completion of $(X \otimes^{\mathrm{alg}} V_{\pi})/N_\pi$ with respect to the inner product \eqref{localisation-inner-product} is a Hilbert space that we denote by $X \otimes_{B}V_{\pi}$ and is commonly called the {\em internal tensor product} of $X$ and $V_{\pi}$ over $B$. An operator $T\in\End_B ^*(X)$ induces an operator $T\otimes 1\in\mathbb{B}(X\otimes_{B}V_{\pi})$ via
\[(T\otimes 1)(x\otimes v):=Tx\otimes v.\]
This gives a $*$ homomorphism $\End_B ^*(X)\to \mathbb{B}(X\otimes_{B}V_{\pi})$. The following Lemma is well-known to experts but we include it for completeness, as it will play a crucial role in the proof of Theorem \ref{main-result-one}.

\begin{lemma}
\label{lem: dense subspace}
Let $X$ be a Hilbert $C^{*}$-module over a $C^{*}$-algebra $B$, $\pi:B\to \mathbb{B}(V_{\pi})$ a $*$-representation. Suppose that $\mathcal{X}\subset X$ and $\mathcal{V}_{\pi}\subset V_{\pi}$ are dense subspaces and let 
\begin{equation} \label{smooth-radical} 
\mathcal{N}_{\pi}:=N_{\pi}\cap (\mathcal{X}\otimes^{\textnormal{alg}}\mathcal{V}_{\pi})
\end{equation}
be the radical of the inner product \eqref{localisation-inner-product} restricted to $\mathcal{X}\otimes \mathcal{V}_{\pi}$. 
Then $(\mathcal{X}\otimes^{\textnormal{alg}} \mathcal{V}_{\pi})/\mathcal{N}_{\pi}$ is a nondegenerate inner product space and the map
\[(\mathcal{X}\otimes^{\textnormal{alg}} \mathcal{V}_{\pi})/\mathcal{N}_{\pi}\to X\otimes_{B} V_{\pi}, \quad [x\otimes v]_{\mathcal{N}_{\pi}} \mapsto x\otimes v,\]
is a well defined inner-product preserving injection with dense range. 
\end{lemma}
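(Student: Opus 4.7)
The plan is to decompose the verification into three pieces: (i) the quotient $(\mathcal{X}\otimes^{\mathrm{alg}}\mathcal{V}_\pi)/\mathcal{N}_\pi$ is a genuine (nondegenerate) inner-product space; (ii) the tautological map to $X\otimes_B V_\pi$ is well-defined and isometric, hence injective; and (iii) its range is dense. The first two steps are essentially bookkeeping, while density will require a small analytic estimate.

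For (i), the key observation is that the sesquilinear form \eqref{localisation-inner-product} is defined by a purely algebraic recipe, so its restriction to $\mathcal{X}\otimes^{\mathrm{alg}}\mathcal{V}_\pi$ equals the evaluation of the same formula. Consequently $\mathcal{N}_\pi$ is exactly the radical of the restricted positive semi-definite form, and the Cauchy--Schwarz inequality (available since the form is positive semi-definite) forces any element of $\mathcal{N}_\pi$ to be orthogonal to all of $\mathcal{X}\otimes^{\mathrm{alg}}\mathcal{V}_\pi$. The form therefore descends to a well-defined nondegenerate inner product on the quotient. For (ii), the composition $\mathcal{X}\otimes^{\mathrm{alg}}\mathcal{V}_\pi \hookrightarrow X\otimes^{\mathrm{alg}}V_\pi \to X\otimes_B V_\pi$ annihilates $\mathcal{N}_\pi\subset N_\pi$ and so factors through the $\mathcal{N}_\pi$-quotient; it preserves inner products because both quotient inner products come from the same algebraic formula, and then injectivity is automatic from isometry plus the nondegeneracy established in (i).

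The substantive step is (iii). I would employ the standard estimate
\[\|x\otimes v\|_{X\otimes_B V_\pi}^2 \;=\; \langle v,\pi(\langle x,x\rangle)v\rangle_{V_\pi} \;\leq\; \|\langle x,x\rangle\|\,\|v\|^2 \;=\; \|x\|_X^2\,\|v\|_{V_\pi}^2,\]
which is valid because $\pi(a)\leq \|a\|\cdot 1$ for every positive $a\in B$. Given $x\in X$ and $v\in V_\pi$, I choose sequences $x_n\in\mathcal{X}$ and $v_n\in\mathcal{V}_\pi$ converging in norm, split
\[x\otimes v - x_n\otimes v_n \;=\; (x-x_n)\otimes v + x_n\otimes(v-v_n),\]
and apply the estimate to each summand to conclude $x_n\otimes v_n\to x\otimes v$ in $X\otimes_B V_\pi$. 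Since the elementary tensors $x\otimes v$ with $x\in X$, $v\in V_\pi$ span a dense subspace of the internal tensor product by construction, this yields density.

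The main pitfall I expect to guard against sits in step (i): one must resist treating $\mathcal{X}\otimes^{\mathrm{alg}}\mathcal{V}_\pi$ as a topological subspace of $X\otimes_B V_\pi$ and applying completion arguments, and instead stay strictly at the algebraic level of semi-definite forms until the passage to the quotient has been made. Once this is kept straight, no further obstacles arise, and the three steps combine to give precisely the conclusion of the lemma.
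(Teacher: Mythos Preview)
Your proposal is correct and follows essentially the same approach as the paper: both arguments deduce nondegeneracy and injectivity from $\mathcal{N}_\pi = N_\pi \cap (\mathcal{X}\otimes^{\mathrm{alg}}\mathcal{V}_\pi)$, and both obtain density by approximating elementary tensors via the estimate $\|x\otimes v\|\leq \|x\|_X\|v\|_{V_\pi}$. The only cosmetic difference is that the paper uses explicit $\varepsilon$-choices while you phrase the approximation with sequences.
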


\begin{proof}
It is clear that $(\mathcal{X}\otimes^{\textnormal{alg}} \mathcal{V}_{\pi})/\mathcal{N}_{\pi}$ is a non-degenerate inner product space. Moreover the map 
\[(\mathcal{X}\otimes^{\textnormal{alg}} \mathcal{V}_{\pi})/\mathcal{N}_{\pi}\to (X\otimes^{\textnormal{alg}} V_{\pi})/N_{\pi}, \quad [x\otimes v]_{\mathcal{N}_{\pi}} \mapsto [x\otimes v]_{N_{\pi}},\]
is well defined and injective since $\mathcal{N}_{\pi}=N_{\pi}\cap (X\otimes^{\textnormal{alg}} V_{\pi})$, and clearly preserves the inner product. Since $(X\otimes^{\textnormal{alg}} V_{\pi})/N_{\pi}$ injects into $X\otimes_{B}V_{\pi}$, so does $(\mathcal{X}\otimes^{\textnormal{alg}} \mathcal{V}_{\pi})/\mathcal{N}_{\pi}$. To see that it has dense range, let $x\in X$ and $v\in V_{\pi}$.  Choose $y\in\mathcal{X}$ with $\|x-y\|_{X}\leq \frac{\varepsilon}{2\|v\|}$ and $w\in\mathcal{V}_{\pi}$ with $\|v-w\|\leq \frac{\varepsilon}{2\|y\|}$. Then, since the norm on $X\otimes_{B}V_{\pi}$ satisfies $\|x\otimes v\|\leq \|x\|\|v\|$ we have
\begin{align*}
\|x\otimes v-y\otimes w\|&\leq \|x\otimes v-y\otimes v\|+\|y\otimes v-y\otimes w\|\\
&=\|(x-y)\otimes v\|+\|y\otimes(v-w)\|\leq \|x-y\|\|v\|+\|y\|\|v-w\|< \varepsilon,
\end{align*}
which shows that elementary tensors $x\otimes v\in X\otimes_{B}V_{\pi}$ can be approximated by elementary tensors $y\otimes w$ with $y\in\mathcal{X}$ and $w\in\mathcal{V}_{\pi}$. Since the elementary tensors span a dense subspace, a standard argument shows that our map has dense range.
\end{proof}
This Lemma shows that the internal tensor product can be constructed using merely dense subspaces of the module and Hilbert space involved. The following Corollary presents an alternative viewpoint on this construction. It will be used in relation to big theta lifts as discussed below in Subsection \ref{bigh-theta}. 

In the setting of Lemma \ref{lem: dense subspace}, suppose further that $\mathcal{B}\subset B$ is a dense $*$-subalgebra and that $\mathcal{X}\cdot \mathcal{B}\subset \mathcal{X}$ and $\pi(\mathcal{B})\cdot \mathcal{V}_{\pi}\subset \mathcal{V}_{\pi}$. The balanced algebraic tensor product $\mathcal{X}\otimes_{\mathcal{B}}^{\mathrm{alg}}\mathcal{V}_{\pi}$ is the quotient $(\mathcal{X}\otimes_{\mathcal{B}}^{\mathrm{alg}}\mathcal{V}_{\pi})/\mathcal{I}_{\pi}$, where $\mathcal{I}_{\pi}$ is as in \eqref{balancing-ideal},
\[\mathcal{I}_{\pi}:=\mathrm{span}\left\{xb \otimes v - x\otimes \pi(b)(v):x \in \mathcal{X}, v \in \mathcal{V}_{\pi}, b \in \mathcal{B}\right\}.\]
A straightforward computation shows that $\mathcal{I}_{\pi}$ is a subset of the radical $\mathcal{N}_{\pi}$ defined in (\ref{smooth-radical}). 

\begin{corollary} \label{bigtheta-apply}
Let $X$ be a Hilbert $C^{*}$-module over a $C^{*}$-algebra $B$, $\pi:B\to \mathbb{B}(V_{\pi})$ a $*$-representation. Suppose that $\mathcal{B}\subset B$ is a dense $*$-subalgebra, and $\mathcal{X}\subset X$ and $\mathcal{V}_{\pi}\subset V_{\pi}$ are dense subspaces satisfying $\mathcal{X}\cdot \mathcal{B}\subset \mathcal{X}$ and $\pi(\mathcal{B})\cdot \mathcal{V}_{\pi}\subset \mathcal{V}_{\pi}$. Then the internal tensor product $X\otimes_{B}V_{\pi}$ is equal to the Hilbert space completion of the quotient of $\mathcal{X}\otimes_{\mathcal{B}}^{\mathrm{alg}}\mathcal{V}_{\pi}$ by the subspace $\mathcal{N}_{\pi}/\mathcal{I}_{\pi}$, where $\mathcal{N}_{\pi}$ is as in (\ref{smooth-radical}).
\end{corollary}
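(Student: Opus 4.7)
My plan is to deduce this corollary from Lemma~\ref{lem: dense subspace} essentially by an application of the third isomorphism theorem. The main observation already provided in the setup is that $\mathcal{I}_\pi \subset \mathcal{N}_\pi$, so the pre-inner product defined on $\mathcal{X} \otimes^{\mathrm{alg}} \mathcal{V}_\pi$ via \eqref{localisation-inner-product} automatically descends to a positive semi-definite sesquilinear form on the balanced quotient $\mathcal{X} \otimes_{\mathcal{B}}^{\mathrm{alg}} \mathcal{V}_\pi = (\mathcal{X} \otimes^{\mathrm{alg}} \mathcal{V}_\pi)/\mathcal{I}_\pi$. First I would check this descent by a one-line computation, noting that for $b\in\mathcal{B}$ one has $\langle xb\otimes v - x\otimes \pi(b)v,\,y\otimes w\rangle = \langle v, \pi(b^*\langle x,y\rangle)w\rangle - \langle v, \pi(b^*)\pi(\langle x,y\rangle)w\rangle = 0$ by multiplicativity of $\pi$, so $\mathcal{I}_\pi$ sits inside the radical $\mathcal{N}_\pi$.

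Next I would identify the radical of the descended form on $\mathcal{X}\otimes_{\mathcal{B}}^{\mathrm{alg}} \mathcal{V}_\pi$ as exactly the image of $\mathcal{N}_\pi$ under the quotient map $\mathcal{X}\otimes^{\mathrm{alg}}\mathcal{V}_\pi \twoheadrightarrow \mathcal{X}\otimes^{\mathrm{alg}}_{\mathcal{B}}\mathcal{V}_\pi$, i.e.\ the subspace $\mathcal{N}_\pi/\mathcal{I}_\pi$. Then by the third isomorphism theorem the further quotient satisfies
\[
\bigl(\mathcal{X} \otimes_{\mathcal{B}}^{\mathrm{alg}} \mathcal{V}_\pi\bigr)\big/\bigl(\mathcal{N}_\pi/\mathcal{I}_\pi\bigr) \;\cong\; \bigl(\mathcal{X} \otimes^{\mathrm{alg}} \mathcal{V}_\pi\bigr)\big/\mathcal{N}_\pi
\]
as non-degenerate inner product spaces, the pairing on the right being the one from Lemma~\ref{lem: dense subspace}.

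Finally I would invoke Lemma~\ref{lem: dense subspace} directly: it provides an inner-product preserving injection of $(\mathcal{X}\otimes^{\mathrm{alg}}\mathcal{V}_\pi)/\mathcal{N}_\pi$ into $X\otimes_B V_\pi$ with dense range. Composing with the isomorphism above shows that the Hilbert space completion of $(\mathcal{X}\otimes_{\mathcal{B}}^{\mathrm{alg}}\mathcal{V}_\pi)/(\mathcal{N}_\pi/\mathcal{I}_\pi)$ coincides with $X\otimes_B V_\pi$, as desired. The only mild subtlety, and the closest thing to an obstacle, is the bookkeeping to ensure that the quotient-of-a-quotient description matches the radical quotient; but this is handled cleanly by the isomorphism theorem once $\mathcal{I}_\pi\subset\mathcal{N}_\pi$ is verified, and no analytic estimates beyond those already present in Lemma~\ref{lem: dense subspace} are needed.
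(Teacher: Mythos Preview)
Your proposal is correct and follows essentially the same route as the paper's own proof: observe that $\mathcal{I}_\pi\subset\mathcal{N}_\pi$ so the form descends to $\mathcal{X}\otimes^{\mathrm{alg}}_{\mathcal{B}}\mathcal{V}_\pi$ with radical $\mathcal{N}_\pi/\mathcal{I}_\pi$, then use the third isomorphism theorem to identify the further quotient with $(\mathcal{X}\otimes^{\mathrm{alg}}\mathcal{V}_\pi)/\mathcal{N}_\pi$, and conclude via Lemma~\ref{lem: dense subspace}. The paper phrases the isomorphism-theorem step as exhibiting the natural map $\mathcal{X}\otimes^{\mathrm{alg}}_{\mathcal{B}}\mathcal{V}_\pi\to(\mathcal{X}\otimes^{\mathrm{alg}}\mathcal{V}_\pi)/\mathcal{N}_\pi$ as a surjection with kernel $\mathcal{N}_\pi/\mathcal{I}_\pi$, but this is the same content.
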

\begin{proof}

The inner product \eqref{localisation-inner-product} descends to $\mathcal{X}\otimes_{\mathcal{B}}\mathcal{V}_{\pi}$ and its radical there equals $\mathcal{N}_{\pi}/\mathcal{I}_{\pi}$. Thus the map
\[\mathcal{X}\otimes^{\mathrm{alg}}_{\mathcal{B}}\mathcal{V}_{\pi}\to \mathcal{X}\otimes^{\mathrm{alg}}\mathcal{V}_{\pi}/\mathcal{N}_{\pi},\quad x\otimes v\mapsto [x\otimes v]_{\mathcal{N}_{\pi}},\]
is well-defined and surjective with kernel $\mathcal{N}_{\pi}/\mathcal{I}_{\pi}$. The result now follows from Lemma \ref{lem: dense subspace}.
\end{proof}
\subsection{Induction of representations}
\label{module-integrated-form} 
Let $A$ be a $C^*$-algebra and assume that there is a homomorphism $\alpha: A \to \End_B ^*(X)$ where $X$ is a Hilbert module over $B$ as above. In such a situation, $X$ is called a {\em $C^*$-correspondence} for $(A,B)$, or simply an {\em $(A,B)$-correspondence}. Using the $*$-homomorphism $\alpha$ and $X$, we can induce representations of $B$ to $A$ via the internal tensor product with a representation $\pi:B\to \mathbb{B}(V_{\pi})$. The action 
$$a(x \otimes v) := \alpha(a)(x) \otimes v$$ 
of $A$ on the space $X\otimes V_{\pi}$ gives rise to representation of $A$ on the Hilbert space $X \otimes_B V_{\pi}$ which we will denote $\textnormal{\small Ind}_{B}^{A}(X, \pi),$
and refer to as the $A$-representation {\em induced from $\pi$ via $X$}.

This induction procedure gives us a functor $$\textnormal{\small Ind}_{B}^{A}(X):{\rm Rep}(B)\to {\rm Rep}(A),\quad\pi\mapsto \textnormal{\small Ind}_{B}^{A}(X, \pi)$$ (see e.g. \cite[Proposition 2.69]{Raeburn-Williams}) from the category ${\rm Rep}(B)$ of non-degenerate representations of $B$ with bounded intertwining operators to the corresponding category ${\rm Rep}(A)$ of $A$. It respects unitary equivalence and direct sums. Moreover, if one equips both categories with the Fell topology (which is based on the notion of weak containment), the above defined induction functor is continuous.

Let $G,H$ be locally compact groups and let $X$ be a Hilbert module over $C^*(H)$. Assume that there is a strongly continuous group homomorphism $\alpha: G \to \mathcal{U}(X)$. Just like in the case of a unitary representation on a Hilbert space, we can integrate to a homomorphism $\alpha:C^{*}(G)\to \End_{C^*(H)} ^*(X)$ (see e.g. \cite{Raeburn-Williams}, Proposition C.17]). Now, the induction construction above gives us a functor 
$${\rm Ind}_{C^*(H)}^{C^*(G)}(X) : {\rm Rep}(C^*(H)) \longrightarrow {\rm Rep}(C^*(G)).$$
 
Recalling the categorical equivalence between the unitary representations of a group and representations of its $C^*$-algebra that we discussed above, we can view $X$ as giving a functor 
\begin{equation}\label{induction-group} {\rm Ind}_H^G(X) : {\rm URep}(H) \longrightarrow {\rm URep}(G),
\end{equation}
for the categories of unitary representations.

In the case where one has a Hilbert module $X$ over $C_r^*(H)$ and a strongly continuous group homomorphism $\alpha: G \to \mathcal{U}(\End_{C^*_r(H)} ^*(X))$, we similarly can view $X$ as giving a functor 
\begin{equation}\label{induction-group-temp} {\rm Ind}_H^G(X) : {\rm TRep}(H) \longrightarrow {\rm URep}(G).
\end{equation}

Our main results (see \ref{main-result-one}, \ref{main-result-two}) will be that local theta correspondence actually arises from functors like these in favourable cases. 
\subsection{Morita equivalence}
Two $C^*$-algebras are Morita equivalent if their categories of representations are equivalent. This notion can be made precise in the language of $C^{*}$-correspondences.
\begin{definition}Let $A,B$ be two $C^*$-algebras. We say that $A$ is {\em Morita equivalent} to $B$ if there exists a $C^{*}$-correspondence $X$ for $(A,B)$ such that $X$ is full Hilbert $C^{*}$-module over $B$ and $\alpha:A \xrightarrow{\sim} \mathbb{K}(X)$ is an isomorphism. 
\end{definition}
It is well-known that Morita equivalence defines an equivalence relation. The module $X$ above implements this equivalence via the internal tensor product and is called an $(A,B)$-{\em equivalence bimodule}. In other words the induction functor ${\rm Ind}_{B}^{A}(X)$ is invertible, so that we obtain an equivalence of categories of representations of $A$ and $B$.

\section{Theta correspondence}
\label{sec:thetta}
In this section, we review the theory of theta correspondence, taking the opportunity to set notations. We follow Howe's original account in \cite{Howe-79} closely. 

\subsection{The oscillator representation}
Assume in this subsection that $F$ is a local field of characteristic $0$. Let $W$ be a symplectic vector space over $F$ with the form $\langle {\cdot},{\cdot}\rangle$. 

The {\em Heisenberg group} $\Heis(W)$ is defined as $W\oplus F$ with the multiplication rule
$$(w,t){\cdot}(w',t') := (w+w', t+t'+\tfrac{1}{2}\langle w,w'\rangle).$$

We fix a non-trivial unitary character $\chi : F \to U(1)$. By the Stone-von Neumann Theorem, there exists, up to unitary equivalence, a unique irreducible unitary representation of $\Heis(W)$ with central character $\chi$. We denote this representation $\rho_\chi$ and call it the {\em Heisenberg representation} (of $\Heis(W)$ attached to $\chi$).  

Let $W=X\oplus Y$ is a decomposition of $W$ into maximal isotropic subspaces, then one can realize $\rho_\chi$ on the Hilbert space $L^2(X)$ via the familiar formulas
\begin{eqnarray}\label{schrodinger-formulas} \rho_\chi((x,0,0))(\phi)(x') &:=& \phi(x+x'),\\
\rho_\chi((0,y,0))(\phi)(x') &:=& \chi(\langle x',y \rangle)\phi(x'),\\
\rho_\chi((0,0,a))(\phi)(x') &:=& \chi(a)\phi(x'),
\end{eqnarray}
for all $x,x' \in X$, $y \in Y$ and $a \in F$. This is the so-called {\em Schr\"odinger model} of the Heisenberg representation.

The space of smooth vectors in this representation is given by the space of Schwartz functions on $X$. This is the space of locally constant functions with compact support in the non-archimedean case, and the usual space of smooth functions of rapid decay in the archimedean case.

The group ${\rm Sp}(W)$ of isometries of the symplectic space $W$ acts on the Heisenberg group $\Heis(W)$ as automorphisms via the rule $g{\cdot}(w,t):=(gw,t)$. Let $\mathcal{M}p(W)$ denote the {\em metaplectic group} of $W$. The metaplectic group fits into an exact sequence
$$1 \to \C^1 \to \mathcal{M}p(W) \to {\rm Sp}(W) \to 1.$$

There is a unique, up to equivalence, unitary representation $\omega_\chi$ of $\mathcal{M}p(W)$ on the Hilbert space of $\rho_\chi$ satisfying the covariance property
\begin{equation} \label{covariance} \omega_\chi(\bar{g})\rho_\chi(h)\omega_\chi(\bar{g}^{-1})= \rho_\chi(g{\cdot}h)
\end{equation}
for all $h \in \Heis(W)$ and $g \in {\rm Sp}(W)$ with lift $\bar{g} \in \mathcal{M}p(W)$. This representation is called the {\em oscillator representation}. 

Note that the smooth vectors of $\omega_\chi$ agree with those of $\rho_\chi$ so that the associated smooth representations also act on the same space (which would be a space of Schwartz functions in the Schr\"odinger model as we mentioned earlier).\newline

Assume now that $F$ is a number field with ring of integers $\mathcal{O}$. For a place $v$ of $F$, let $F_v$ denote the local field obtained by the completion of $F$ at $v$. Let $\mathcal{O}_v$ denote the ring of integers of $F_v$. We let $\A$ denote the ring of adeles of $F$. Let $W$ be a symplectic vector space over $F$. As in the local case, we form the attached Heisenberg group $\Heis(W)$. Fixing a standard symplectic basis $\{ e_1, \hdots, e_n, f_1, \hdots, f_n \}$, we consider the $\mathcal{O}$-lattice 
$$L := \langle e_1, \hdots, e_n, f_1, \hdots, f_n \rangle\subrangle{\mathcal{O}}$$ 
inside $W$. For each finite place $v$, we have the $\mathcal{O}_v$-lattice $L_v=L\otimes_\mathcal{O} \mathcal{O}_v$ inside the $F_v$-vector space  $W_v=W \otimes_F F_v$. Notice that 
$$\Heis^o(W_v) := L_v \oplus \mathcal{O}_v$$
is a compact open subgroup of $\Heis(W_v)$. We form the adelic Heisenberg group as the restricted product
$$\Heis_\A :=  \sideset{}{'}\prod_v (\Heis(W_v) : \Heis^o(W_v))$$
of the local Heisenberg groups $\Heis(W_v)$ with respect to the collection compact open subgroups $\Heis^o(W_v)$. As usual, $\Heis(W)$ embeds 
into $\Heis_\A$ as a discrete subgroup.

Let us fix a global character $\chi : \A \to \C^1$ that is trivial on the subgroup $F$. Let $\chi_v : F_v \to \C^1$ denote the local components of $\chi$. Note that for all but finitely many places $v$, we have that $\chi_v$ is trivial on $\mathcal{O}_v$. At each such place $v$, there is a unique vector 
$\phi_v$ in the representation $\rho_{\chi_v}$ that is fixed by $\rho_{\chi_v}(\Heis^o(W_v))$. This vector is always smooth. 

Let us describe the vectors $\phi_v$ in the Schr\"odinger model. Consider the polarization 
$$W_v=X_v \oplus Y_v = \langle e_1, \hdots, e_n \rangle\subrangle{F_v} \oplus \langle f_1, \hdots, f_n \rangle\subrangle{F_v}$$
and realize $\rho_{\chi_v}$ on $L^2(X_v)$. The set 
\begin{equation} \label{lattice} \mathcal{X}_v=\langle e_1, \hdots, e_n \rangle\subrangle{\mathcal{O}_v}
\end{equation}
is an $\mathcal{O}_v$-lattice in $X_v$ and one can check directly from the formulas (\ref{schrodinger-formulas}) that we can take 
$$\phi_v=\chi_{\mathcal{X}_v},$$
to be the characteristic function of the lattice $\mathcal{X}_v$. Using the vectors $\phi_v$, we take the restricted tensor product of the local Heisenberg representations $\rho_{\chi_v}$ to obtain a unitary representation $\rho_\chi$ of $\Heis_\A$. 

Using the standard symplectic basis that we fixed, we identify ${\rm Sp}(W)$ with the $F$ points of the algebraic group ${\rm Sp}={\rm Sp}_{2n}$. Let $J_v$ be the stabilizer of $\Heis^o(W_v)$ inside ${\rm Sp}(W_v)$. Then $J_v$ is compact open for all but finitely many places $v$ and adelic group 
${\rm Sp}_\A$ is given by the restricted product
 $${\rm Sp}_\A =  \sideset{}{'}\prod_v ({\rm Sp}(W_v) : J_v).$$
Assembling the local actions together, the adelic group ${\rm Sp}_\A$ acts on the adelic Heisenberg group $\Heis_\A$ and following the same arguments as in the local setting, we deduce that there is an global metaplectic group $\mathcal{M}p_\A$ satisfying
$$1 \to \C^1 \to \mathcal{M}p_\A \to {\rm Sp}_\A \to 1.$$
and a unitary representation $\omega_\chi$ on the space of $\rho_\chi$ satisfying the analogous covariance property as in (\ref{covariance}).

The group $\mathcal{M}p_\A$ is not a restricted product, but a quotient of one. For all places away from $2$, the subgroups $J_v$ have a unique lifting to $\mathcal{M}p_v$. Using this lifting, we view $J_v$ as a subgroup of $\mathcal{M}p_v$. It turns out that $\mathcal{M}p_\A$ is a quotient of  $\sideset{}{'}\prod_v (\mathcal{M}p(W_v) : J_v)$
by a central subgroup.

It follows from the covariance property (\ref{covariance}), for each $J_v \leq \mathcal{M}p(W_v)$, the vector $\phi_v$ above is fixed under $\omega_{\chi_v}(J_v)$. Using these vectors, we can take the restricted tensor product $\sideset{}{'}\prod_{\phi_v} \omega_{\chi_v}$ of the local oscillator representations to obtain a representation of $\sideset{}{'}\prod_v (\mathcal{M}p(W_v) : J_v)$. The pullback of the oscillator representation $\omega_\chi$ of $\mathcal{M}p_\A$ back to $\sideset{}{'}\prod_v (\mathcal{M}p(W_v) : J_v)$ via the quotient map agrees with 
$\sideset{}{'}\prod_{\phi_v} \omega_{\chi_v}$. 

For later use, we summarize the salient features of the above discussion in the proposition below.

\begin{proposition}
\label{findingfixed}
The smooth oscillator representation $\omega$ of $\mathcal{M}p(W_v)$ admits a fixed vector for $J_v$ in all but finitely many places. In the Schr\"odinger model, the fixed vector is given by the characteristic function of the lattice $\mathcal{X}_v$ given in (\ref{lattice}).
\end{proposition}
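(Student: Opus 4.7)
The plan is to first establish an $\Heis^o(W_v)$-fixed vector in the Heisenberg representation $\rho_{\chi_v}$ via a direct computation in the Schr\"odinger model, and then transfer it to the oscillator representation $\omega_{\chi_v}$ using the covariance relation \eqref{covariance}. Throughout, I would restrict to the cofinite set of places $v$ at which $\chi_v$ is trivial on $\mathcal{O}_v$ with conductor exactly $\mathcal{O}_v$, at which $J_v$ is compact open, and at which the canonical lifting $J_v \hookrightarrow \mathcal{M}p(W_v)$ is available (in particular $v \nmid 2$).

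First, I would substitute $\phi_v = \chi_{\mathcal{X}_v}$ into the three formulas \eqref{schrodinger-formulas} and check invariance under $\Heis^o(W_v) = L_v \oplus \mathcal{O}_v$ termwise. Translation invariance under $L_v \cap X_v = \mathcal{X}_v$ is immediate from the definition of $\chi_{\mathcal{X}_v}$; the character twist by $\chi_v(\langle x', y \rangle)$ for $y \in L_v \cap Y_v$ is trivial on the support of $\phi_v$ because $\langle \mathcal{X}_v, L_v \cap Y_v \rangle \subset \mathcal{O}_v$ and $\chi_v|_{\mathcal{O}_v} = 1$; and the central action is trivial for the same reason.

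Second, I would argue that $\C \phi_v$ is precisely the space of $\Heis^o(W_v)$-fixed vectors in $L^2(X_v)$. Invariance under the $L_v \cap X_v$-translations forces any such fixed vector to descend to a function on $X_v / \mathcal{X}_v$; invariance under the character twists by elements of $L_v \cap Y_v$ forces the support to lie in $\mathcal{X}_v$, this step being where the assumption that $\chi_v$ has conductor $\mathcal{O}_v$ is essential. Together these two conditions pin down $\phi_v$ up to scalar. Now the covariance relation enters: for any $g \in J_v$, which stabilizes $\Heis^o(W_v)$, and any lift $\bar g \in \mathcal{M}p(W_v)$, the vector $\omega_{\chi_v}(\bar g)\phi_v$ is again $\Heis^o(W_v)$-fixed, and hence equals $c(\bar g)\phi_v$ for some scalar $c(\bar g) \in \C^1$. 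The assignment $\bar g \mapsto c(\bar g)$ defines a character on the preimage of $J_v$ in $\mathcal{M}p(W_v)$.

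The main obstacle is then to exhibit a lifting of $J_v$ into $\mathcal{M}p(W_v)$ on which this character is trivial. This is precisely the role played by the canonical splitting of $J_v$ coming from the theory of the Weil representation (available away from $v \mid 2$): the splitting is normalized so that the character $c$ becomes trivial, making $\phi_v$ invariant under the lifted $J_v$. Once this standard input is invoked, the remaining verifications are routine computations in the Schr\"odinger model and the proposition follows.
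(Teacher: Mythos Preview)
Your proposal is correct and follows essentially the same route as the paper. The paper does not give a separate proof of this proposition; it is explicitly presented as a summary of the preceding discussion, which (i) verifies via the Schr\"odinger formulas that $\chi_{\mathcal{X}_v}$ is the unique $\Heis^o(W_v)$-fixed vector and (ii) invokes the covariance relation \eqref{covariance} together with the unique lifting of $J_v$ into $\mathcal{M}p(W_v)$ away from $2$ to conclude $J_v$-invariance of $\phi_v$. Your write-up in fact makes explicit a point the paper glosses over, namely that covariance a priori only yields invariance up to a character $c$ of the lifted $J_v$, which then has to be seen to be trivial for the canonical splitting; this is a welcome clarification rather than a departure from the paper's argument.
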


\subsection{Dual groups} \label{dual groups} Let $G'$ and $G$ be algebraic reductive subgroups of the algebraic group ${\rm Sp}$  associated to the symplectic space $W$ over a field $F$ such that $G'$ and $G$ are each others' centralizers in ${\rm Sp}$. Such pairs $(G',G)$ are called {\em dual pairs}. These are classified by Howe; a table describing the classification can be conveniently found in \cite[Table 1, p.552]{Sakellaridis-17}. We will call $G$ the \emph{smaller group} if the (Langlands) dual group of $G$ is smaller than $G'$, following \cite[p.552]{Sakellaridis-17}.

Let $(G',G)$ be a dual pair over a number field $F$. Let $v$ be a place of $F$. Since $G'_v=G'(F_v)$ and $G_v=G(F_v)$ commute with each other inside ${\rm Sp}(W_v)$, there is a natural homomorphism $G'_v \times G_v \rightarrow {\rm Sp}(W_v)$. For all classes of dual pairs but one (see Section \ref{exceptional}), Kudla has constructed explicit splittings $\iota_v$
$$\xymatrix{ & \mathcal{M}p(W_v) \ar[d] \\ 
G'_v {\times} G_v \ar@{.>}[ur]^{\iota_v} \ar[r] & {\rm Sp}(W_v)}
$$
We pull-back the local oscillator representation $\omega_{\chi_v}$ of $\mathcal{M}p(W_v)$ that we discussed earlier to $G'_v {\times} G_v$ via these splittings. 

For finite places $v$, we fix the compact open subgroups $K'_v:=G'(\mathcal{O}_v)$ and $K_v=G(\mathcal{O}_v)$. 
For all but finitely many $v$, we have that 
\begin{equation} 
\label{compact-inclusion} 
\iota_v(K'_v) \subseteq J_v, \qquad  \iota_v(K_v) \subseteq J_v,
\end{equation} 
where $J_v$ is the compact open subgroup of ${\rm Sp}(W_v)$ viewed as a compact open subgroup of $\mathcal{M}p(W_v)$ thanks to the canonical splitting mentioned above. The inclusions \eqref{compact-inclusion} allow for a global splitting 
\begin{equation} \label{global-splitting} G'(\A) \times G(\A) \longrightarrow \sideset{}{'}\prod_v (\mathcal{M}p(W_v) : J_v) \to \mathcal{M}p_\A
\end{equation}
via which we obtain the global oscillator representation $\omega_\chi$ of $G'(\A) \times G(\A)$.

\subsubsection{The exceptional case} \label{exceptional} We mentioned above that Kudla's splittings work for all but one class of dual pairs; in this exceptional case, the dual pair is of the form $(G',G)=({\rm Sp}(V'),O(V))$ with $V'$ a symplectic space and $V$ a quadratic space of {\em odd} dimension over $F$. One still can split the orthogonal groups $O(V_v)$, however, the symplectic groups ${\rm Sp}(V'_v)$ admit no splitting. So for all places $v$, we instead set 
$$G'_v:={\rm Mp}(V'_v),$$
the unique double cover of ${\rm Sp}(V'_v)$. This enlargement allows for a splitting and the local oscillator representation $\omega_{\chi_v}$ of $\mathcal{M}p(W_v)$ now pulls back to a representation of $G'_v \times G_v$.

Just as before, the standard compact subgroup $K'_v$ of  ${\rm Sp}(V'_v)$ admits a unique splitting into ${\rm Mp}(V'_v)$ for all but finitely many $v$. We define the global group $G'(\A) $ as a quotient of $\sideset{}{'}\prod_v ({\rm Mp}(V'_v) : K'_v)$ by a certain central subgroup. 
Then $G'(\A)$ is a double cover of ${\rm Sp}(V')(\A)$ and there is a global splitting of $G'(\A) \times G(\A)$ just like in (\ref{global-splitting}).

\subsection{Theta correspondence} 
\label{loca-global-compatibility-2}  
Let $F$ be local field. Let $(G',G)$ be a dual pair over $F$ and let 
$\omega$ be the oscillator representation of $G'\times G$, associated to a non-trivial character of $F$, as discussed above. The local theta correspondence, or local Howe duality, is the fact that the rule
\begin{equation} 
\label{recipe} 
\sigma \leftrightarrow \pi \qquad \textrm{iff} \qquad  {\rm Hom}_{G'\times G}(\omega^\infty, \sigma\otimes \pi) \not= 0
\end{equation}
sets up a bijection between the subsets 
\[\{ \sigma \in {\rm Irr}(G') \mid {\rm Hom}_{G'}(\omega^\infty, \sigma) \not= 0 \}\quad\textnormal{ and}\quad  \{ \pi \in {\rm Irr}(G)  \mid {\rm Hom}_G(\omega^\infty, \pi) \not= 0 \},\] 
of the admissible duals of $G'$ and $G$. This was originally conjectured by Howe in \cite{Howe-79} who also proved it in the archimedean case. In the non-archimedean set-up, it was proven by Waldspurger \cite{Waldspurger-90} when the residue characteristic $p$ of $F$ is not equal to 2. Remaining cases were completed much later by Gan and Takeda \cite{Gan-Takeda-16} and by Gan and Sun \cite{Gan-Sun-17}. If two representations correspond to each other under this bijection, we say that {\em they are each other's theta lifts}.

For the global version, we fix a number field $F$. Let $(G',G)$ be a dual pair over $F$ and let 
$\omega$ be the global oscillator representation of $G'(\A) \times G(\A)$, associated to  a character $\chi : \A \to \C^1$ that is trivial on the subgroup $F$, as discussed above. The global theta correspondence is defined using the same rule (\ref{recipe}) on the admissible duals of the global groups $G'(\A)$, $G(\A)$ and the global oscillator representation $\omega$.

There is a compatibility between the local correspondences and the global one as we describe now. Let $\pi$ be an irreducible admissible representation of $G(\A)$. 
Let $\sideset{}{'}\bigotimes \pi_v$ be the factorization of $\pi$. Assume that $\pi$ enter the global theta correspondence and with corresponding irreducible admissible representation $\sigma$ of $G(\A)$. The local representations $(\pi_v,\mathcal{H}_v)$ are \emph{unramified} (that is $\mathcal{H}_v^{K_v}$ is one dimensional) for all but finitely many places. Let $\sigma_v$ denote the irreducible admissible representation of $G_v$ corresponding to $\pi_v$. If $\pi_v$ is unramified, then $\sigma_v$ is known to be unramified as well. Thus we can take the restricted products of the $\sigma_v$'s. The basic result is that the restricted product $\sideset{}{'}\bigotimes \sigma_v$ gives the factorization of $\sigma$.

\subsection{Type I dual pairs} 
\label{dual-pairs}
We will describe the irreducible reductive dual pairs $(G',G)$ of Type I. Since we will only consider irreducible pairs, we will omit this adjective in the rest of the paper for convenience. 

Let $D$ be a division algebra over $F$ with involution $\iota$. Let $V'$ and $V$ be two $D$-modules equipped with non-degenerate right $\iota$-sesquilinear forms $({\cdot}, {\cdot})'$ and $({\cdot}, {\cdot})$. Assume that one of the forms is $\iota$-hermitian and the other is $\iota$-skew-hermitian. We consider vector space 
$$W:={\rm Hom}_D(V',V).$$ 
Given $w \in W$, we define $w^* \in {\rm Hom}_D(V,V')$ by the rule 
\begin{equation} \label{adjoint}(w(v'),v)=(v', w^*(v))'.
\end{equation}
Then the formula
$$\langle w',w \rangle = {\rm tr}_{D/F} \left (w'w^* \right )$$ 
turns $W$ into a symplectic vector space. As before, we put ${\rm Sp}(W)$ for the isometry group of $W$. We let ${\rm Mp}(W)$ denote the {\em metaplectic group}, the unique non-trivial central double cover of ${\rm Sp}(W)$. We set 
$$G'=G'(V')=\begin{cases} \textrm{the metaplectic group ${\rm Mp}(V')$ if $V'$ is symplectic and {\rm dim}(V) is odd,} \\ 
\textrm{the isometry group of $V'$, otherwise.}
\end{cases}
$$
We define $G=G(V)$ similarly by switching the roles of $V'$ and $V$. The group $G'$ and $G$ act on $W$ via post-multiplication and pre-multiplication by the inverse, respectively. Via these actions, they embed in ${\rm Sp}(W)$. 

\subsection{Stable range} \label{stable-range-def} We say that the dual pair $(G',G)$ is in the {\em stable range}, with $G$ the smaller member, if $V'$ has an isotropic subspace whose dimension is greater than or equal to that of $V$. For later use, we record here the following, see \cite[Corollary 3.3]{Li-89}.

\begin{lemma}
\label{L1-exception} 
Let $(G',G)$ be a Type I dual pair over a local field in the stable range with $G$ the smaller member. Let $\omega$ be the oscillator representation associated to the pair $(G',G)$. If $(G',G)$ is not the ortho-symplectic pair $(O_{2n,2n},Sp_{2n})$ where $O_{2n,2n}$ is the orthogonal group of the split quadratic form in $4n$ variables, then $\omega_{| G}$ has matrix coefficients in $L^{1}(G)$.
\end{lemma}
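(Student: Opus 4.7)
The plan is to follow Li's strategy in \cite[Corollary 3.3]{Li-89}. First I would realize the oscillator representation in a Schr\"odinger model adapted to the stable range structure. By hypothesis $V'$ contains an isotropic subspace $V_0$ with $\dim V_0 \geq \dim V$; choosing a complementary isotropic $V_0^*$ yields a decomposition $V'= V_0\oplus V_1\oplus V_0^*$ with $V_1$ non-degenerate (possibly trivial). This induces a compatible polarization $W=X\oplus Y$ of the symplectic space on which $\omega$ is realized on the Schwartz space $\mathcal{S}(X)$, understood in the non-archimedean or archimedean sense as appropriate.

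Next, I would reduce the $L^1$-integrability question on $G$ to an estimate on a positive Weyl chamber $A^+$ of a maximal split torus, via the Cartan decomposition $G=KA^+K$ and the Weyl integration formula
\[\int_G f(g)\,dg=\int_K\int_{A^+}\int_K f(k_1 a k_2)\,J(a)\,dk_1\,da\,dk_2,\]
where the Jacobian $J(a)$ is a product of root factors with polynomial growth in the hyperbolic coordinates of $a$. Since matrix coefficients of $\omega$ between Schwartz vectors are bi-$K$-continuous, it suffices to bound $|\langle\omega(a)\phi,\psi\rangle|$ for $a\in A^+$ and $\phi,\psi\in\mathcal{S}(X)$, and to verify that this bound integrated against $J(a)$ is finite.

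The heart of the proof is an explicit estimate of the form
\[|\langle\omega(a)\phi,\psi\rangle|\leq C_{\phi,\psi}\prod_i|a_i|^{-d_i/2},\]
where $a_1,\ldots,a_r$ are the split eigenvalues of $a\in A^+$ and the exponents $d_i$ are determined by $\dim V$, $\dim V'$ and the type of the form. These bounds come from the explicit Schr\"odinger-model formulas for the action of the split torus, in which $\omega(a)$ acts by a combination of dilation together with a Fourier-like kernel whose effective support shrinks as $a$ moves deep into $A^+$. The stable range assumption $\dim V_0\geq \dim V$ produces $d_i$ strictly exceeding the corresponding root multiplicities appearing in $J(a)$, yielding $L^1$-integrability. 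In the excluded split orthogonal pair $(O_{2n,2n},Sp_{2n})$, the equality $\dim V_0=\dim V$ combined with the maximally split quadratic form causes one exponent to match the Jacobian contribution exactly, forcing (at best) a logarithmic divergence.

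The main obstacle will be the case-by-case verification of the exponents $d_i$ for each family of dual pairs (orthogonal, symplectic, unitary, and quaternionic) and the bookkeeping needed to check that stable range translates into \emph{strict} inequalities between the decay exponents and the root multiplicities. The exclusion of $(O_{2n,2n},Sp_{2n})$ is not an artefact of the method but a genuine boundary phenomenon at the edge of stable range: any attempt to prove the lemma without this exception must fail. For the purposes of the present paper one can simply invoke \cite[Corollary 3.3]{Li-89} directly, but the sketch above indicates why the stated assumptions are the natural ones.
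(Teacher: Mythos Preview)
Your proposal is correct and in fact goes further than the paper: the paper does not prove this lemma at all but simply records it with the attribution ``see \cite[Corollary 3.3]{Li-89}''. What you have written is a faithful outline of Li's actual argument (Schr\"odinger model adapted to the stable-range polarization, reduction via the Cartan decomposition and Weyl integration formula, and explicit torus-decay estimates compared against the root multiplicities in the Jacobian), together with the correct identification of why the split pair $(O_{2n,2n},Sp_{2n})$ sits at the boundary and must be excluded. Since you already observe that for the purposes of this paper one may invoke \cite[Corollary 3.3]{Li-89} directly, your proposal subsumes the paper's treatment; the added sketch is informative but not required here.
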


\subsection{A construction of Jian-Shu Li}
We now explain an explicit construction due to J.-S. Li which lies at the heart of our approach to the local theta correspondence.

Let $(G',G)$ be a Type I dual pair over a local field. Let $\mathbb{S}$ denote the subspace of smooth vectors for the oscillator representation $\omega$ of $G'\times G$. Given an irreducible unitary representation $\pi$ of $G$, one formally introduces a right sesquilinear form $({\cdot}, {\cdot})_\pi$ on $\mathbb{S} \otimes V_{\pi}^\infty$ as follows: if $\Phi = x \otimes y$ and $\Psi = v\otimes w$, then 
\begin{align} \label{Li-form} (\Phi, \Psi)_\pi &:= \int_G \big \langle \Phi,  (\omega\otimes\pi)(s)\Psi \big \rangle \mathrm{d}s \\
\label{Li-form-2} &= \int_G \langle x,\omega(s)(y) \rangle \langle  v,\pi(s)(w)\rangle \mathrm{d}s
\end{align}
Assuming that the integral \eqref{Li-form-2} is convergent (this will be the case if $\omega_{|G}$ is integrable, for example), it is easy to see that this form is Hermitian and $G'$-invariant with respect to the natural action $\omega \otimes {\bf 1}$ of $G'$.   Let $\mathcal{N}_{\pi}$ denote the radical of $({\cdot},{\cdot})_\pi$. Then $\mathcal{N}_{\pi}$ is stabilised by $G'$ and thus the quotient space  
$$\left ( \mathbb{S} \otimes V_{\pi}^\infty \right )/\mathcal{N}_{\pi}$$
affords a $G'$-representation that we will denote by $L(\pi)$. If furthermore $(\ref{Li-form-2})$ is non-negative, then $L(\pi)$ is, after completion, a unitary $G'$-representation.

\begin{theorem}[Li \cite{Li-89}]
Assume that the Hermitian form $({\cdot}, {\cdot})_\pi$ defined in (\ref{Li-form-2}) is convergent and non-negative. If the unitary $G'$-representation $L(\pi)$ is non-zero and irreducible, then $L(\pi)$ is isomorphic to the theta lift $\theta(\pi^*)$ of the contragradient $\pi^*$ of $\pi$.  
\end{theorem}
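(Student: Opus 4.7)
The plan is to verify that $L(\pi)$ satisfies the defining property of $\theta(\pi^*)$ from equation (\ref{recipe}) and then invoke Howe duality together with the irreducibility of $L(\pi)$. Concretely, I aim to produce a nonzero $G' \times G$-equivariant map $\s \to L(\pi) \otimes V_{\pi^*}^\infty$ directly from the quotient $q : \s \otimes V_\pi^\infty \twoheadrightarrow L(\pi)$, thereby exhibiting $L(\pi)$ as an irreducible $G'$-representation matching $\pi^*$ in the theta correspondence, which by Howe's uniqueness must be $\theta(\pi^*)$.

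The heart of the argument is showing that the diagonal action of $G$ on $\s \otimes V_\pi^\infty$ via $\omega|_G \otimes \pi$ descends to the \emph{trivial} action on the quotient $L(\pi)$; equivalently, $\omega(s_0) x \otimes \pi(s_0) v - x \otimes v \in \mathcal{N}_\pi$ for every $s_0 \in G$, $x \in \s$, and $v \in V_\pi^\infty$. Pairing with an arbitrary $y \otimes w$ via $({\cdot},{\cdot})_\pi$ and using unitarity of $\omega$ and $\pi$ yields
\begin{equation*}
(\omega(s_0) x \otimes \pi(s_0) v, y \otimes w)_\pi = \int_G \langle x, \omega(s_0^{-1}s) y\rangle \langle v, \pi(s_0^{-1}s) w\rangle \mathrm{d}s,
\end{equation*}
and the substitution $s \mapsto s_0 s$ together with left invariance of Haar measure reduces the right-hand side to $(x \otimes v, y \otimes w)_\pi$, as required. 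An analogous but simpler computation shows that $({\cdot},{\cdot})_\pi$ is $G'$-invariant for the $\omega \otimes 1$ action, using the fact that $\omega|_{G'}$ and $\omega|_G$ commute, so that $L(\pi)$ is genuinely a $G'$-representation.

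With $G$-triviality of $L(\pi)$ in hand, the tensor-Hom adjunction recasts $q$ as a map $\tilde q : \s \to {\rm Hom}(V_\pi^\infty, L(\pi))$, $x \mapsto [v \mapsto q(x \otimes v)]$. The identity $q(\omega(s) x \otimes v) = q(x \otimes \pi(s^{-1}) v)$ following from the previous step shows that $\tilde q$ is $G$-intertwining for the contragredient action on the target, and $\tilde q \neq 0$ since $L(\pi) \neq 0$. Because $x \in \s$ is by definition $G$-smooth, $\tilde q(x)$ is a $G$-smooth vector in the target, which by admissibility of $\pi$ lies in $L(\pi) \widehat\otimes V_{\pi^*}^\infty$. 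This produces the desired nonzero element of ${\rm Hom}_{G' \times G}(\s, L(\pi) \otimes \pi^*)$, and the conclusion $L(\pi) \simeq \theta(\pi^*)$ now follows from Howe duality and the assumption that $L(\pi)$ is irreducible.

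The main obstacle I anticipate is the identification in the last paragraph of the $G$-smooth vectors of ${\rm Hom}(V_\pi^\infty, L(\pi))$ with $L(\pi) \widehat\otimes V_{\pi^*}^\infty$, i.e.\ verifying that $\tilde q(x)$ has the tensor-product form demanded by the formulation of theta correspondence in (\ref{recipe}). In the non-archimedean setting this is routine from admissibility of $\pi$ and $K$-finiteness of $x \in \s$; in the archimedean setting one needs a Casselman--Wallach-style continuity argument, closely tied to the convergence hypothesis on $({\cdot},{\cdot})_\pi$ assumed in the theorem. Once this technical point is settled, the remainder of the argument is formal.
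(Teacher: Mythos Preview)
The paper does not give its own proof of this statement; immediately after the theorem it cites \cite[Section 6]{Li-89}, \cite[Section 7]{He-00}, and \cite[Section 16.5]{Gan-Ichino-14}. Your argument follows the standard strategy found in those references: establish via Haar-measure invariance that the diagonal $G$-action on $\s \otimes V_\pi^\infty$ becomes trivial on the quotient $L(\pi)$, then use tensor--Hom adjunction together with admissibility of $\pi$ to produce a nonzero element of ${\rm Hom}_{G'\times G}(\s, L(\pi)\otimes \pi^*)$, and conclude by Howe duality and irreducibility of $L(\pi)$.

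Your identification of the one genuine technical point---showing that $\tilde q(x)$ lands in $L(\pi)\widehat\otimes V_{\pi^*}^\infty$ rather than merely in ${\rm Hom}(V_\pi^\infty,L(\pi))$---is exactly right. In the non-archimedean case your sketch is complete: $x$ is fixed by some compact open $K\subset G$, so $\tilde q(x)$ factors through the (finite-dimensional) $K$-coinvariants of $V_\pi^\infty$ and hence has finite rank. In the archimedean case the cited references typically work at the level of Harish-Chandra modules (replacing smooth vectors by $K$-finite ones), which avoids the completed-tensor-product issue altogether; alternatively one invokes Casselman--Wallach globalization as you suggest. Either way, once this point is handled the rest of your argument is correct and essentially coincides with the literature.
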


The theorem is proved in \cite[Section 6]{Li-89}. See also \cite[Section 7]{He-00} and \cite[Section 16.5]{Gan-Ichino-14}. We will consider two settings in which Li's construction is known to work.

\subsection{Li's method for the stable range} \label{Li-results}
For dual pairs in the stable range, the local theta correspondence preserves unitarity as proven by Li (\cite[Theorem A]{Li-89}). This is where Li's method originated. See also \cite{Soudry-89}.

\begin{theorem}\label{Li-unitary}(Li \cite{Li-89})  Let $(G',G)$ be a Type I stable range dual pair over a local field with $G$ the smaller member. Assume that $(G',G)$ is not the pair $(O_{2n,2n},Sp_{2n})$. Let $\pi$ be an irreducible unitary representation of $G$. If $G$ is the metaplectic group, then we assume that $\pi$ does not factor through the symplectic group\footnote{In this case, if $\pi$ factors through the symplectic group, it is easy to see that $\Theta(\pi^\infty)$ is zero.}. 

We have
\begin{enumerate} 
\item the form $({\cdot}, {\cdot})_\pi$ is convergent and non-negative,
\item $L(\pi)$ is nonzero and irreducible. Hence $L(\pi) \simeq \theta(\pi^*)$.
\end{enumerate}
In particular, all of the unitary dual of $G$ enters the theta correspondence and unitary is preserved under the theta correspondence.
\end{theorem}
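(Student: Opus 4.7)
The plan is to verify the two claims in order, keeping in mind that the $C^*$-algebraic framework of the paper reduces (1) and (2) to verifying assumptions (A1) and (A2) from the blueprint in Subsection \ref{blueprint}, applied to the smooth oscillator module $X_0 = \mathbb{S}$.

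First I would address convergence. For $\xi,\eta \in \mathbb{S}$, the matrix coefficient $g \mapsto \langle \xi, \omega(g)\eta\rangle$ is continuous, and by Lemma \ref{L1-exception} (applicable since we excluded the pair $(O_{2n,2n},Sp_{2n})$) it lies in $L^1(G)$. Since $\pi$ is unitary, its matrix coefficients are bounded by the product of the vector norms. Thus the integrand in \eqref{Li-form-2} is the product of an $L^1$ function and a bounded continuous function, hence $L^1$, and the integral converges absolutely for all $\Phi, \Psi \in \mathbb{S} \otimes V_\pi^\infty$. In $C^*$-algebraic language, this establishes (A1) with the matrix coefficients landing in $L^1(G) \subset C^*(G)$.

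The main obstacle is positivity (A2), i.e.\ showing $\langle \xi, \xi\rangle_G \geq 0$ in $C^*(G)$ (equivalently, that $(\Phi,\Phi)_\pi \geq 0$ for every unitary $\pi$). I would follow Li's original strategy: exploit the stable range assumption to write $V' = V'_+ \oplus V'_0 \oplus V'_-$ with $V'_\pm$ a dual pair of isotropic subspaces and $\dim V'_+ \geq \dim V$, and choose a polarization of $W = \mathrm{Hom}_D(V',V)$ whose Lagrangian contains $\mathrm{Hom}_D(V'_+,V)$. In the resulting Schr\"odinger model, the action of $G$ can be made very explicit on that factor, and one disintegrates the integral over $G$ using a change of variables of the Weil-representation kernel. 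The upshot, due to Li, is that the bilinear form \eqref{Li-form-2} can be rewritten as an honest Hilbert-space inner product on the image of a suitable integral operator, manifestly making it positive semi-definite. At the $C^*$-level, one can reinterpret this as factoring $g \mapsto \langle \xi, \omega(g)\xi\rangle$ through the GNS construction of some positive functional, which is exactly the content of being positive as an element of $C^*(G)$.

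With (A1) and (A2) verified, the Rieffel-induction formalism of Section \ref{Cstar} immediately produces a Hilbert space $X \otimes_{C^*(G)} V_\pi$ carrying a unitary $G'$-action; this is precisely the completion of $L(\pi)$. Finally, for (2) I would argue non-vanishing by producing explicit test vectors: pick a highest-weight-like $\xi \in \mathbb{S}$ for which $\omega(g)\xi$ has support concentrated near the identity, so that the integral in \eqref{Li-form} inherits non-vanishing from the non-vanishing of $\langle v, \pi(g)v\rangle$ near $g=e$. Irreducibility of $L(\pi)$ then follows from Howe duality: $L(\pi)$ is by construction a $G'$-quotient of $\mathbb{S}\otimes V_\pi^\infty$, so its irreducible quotients must be of the form $\theta(\sigma)$ for some $\sigma$ appearing in $\pi^*\otimes\cdots$, and Howe's bijection forces $L(\pi) \simeq \theta(\pi^*)$. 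The preservation of unitarity is now automatic, since $L(\pi)$ is unitary by construction. The hardest step, as indicated, is the positivity in (A2), which in this proof plan is the only place where the stable range hypothesis is essentially used.
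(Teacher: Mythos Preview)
The paper does not prove this theorem: it is stated as Li's result with a citation to \cite{Li-89} and then used as an input. In fact the logical flow in the paper runs opposite to your proposal: Proposition~\ref{construct-1} (positivity of the $C^*(G)$-valued inner product, i.e.\ assumption (A2)) is deduced \emph{from} Theorem~\ref{Li-unitary}, so invoking the Rieffel formalism of Section~\ref{Cstar} to establish Li's theorem would be circular. You do seem aware of this, since you sketch Li's own polarization argument for (A2) rather than appealing to the module $\T$; that sketch (decompose $V'=V'_+\oplus V'_0\oplus V'_-$ with $\dim V'_+\geq \dim V$ and use the mixed Schr\"odinger model) is indeed the correct outline of what Li does, and your convergence argument via Lemma~\ref{L1-exception} is exactly right.

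The genuine gap is in your treatment of part~(2). Your non-vanishing sketch (``pick $\xi$ with $\omega(g)\xi$ concentrated near $e$'') is too vague to constitute a proof; in Li's paper this requires an explicit computation in the mixed model. More importantly, your irreducibility argument is circular. You write that Howe's bijection forces $L(\pi)\simeq\theta(\pi^*)$, but Howe duality only tells you that the maximal $\pi$-isotypic \emph{smooth} quotient of $\omega^\infty$ has the form $\Theta(\pi)\otimes\pi$ with $\Theta(\pi)$ admitting a unique irreducible quotient; it says nothing directly about the unitary completion $L(\pi)$, and in particular does not rule out $L(\pi)$ being reducible. The identification $L(\pi)\simeq\theta(\pi^*)$ is precisely the content of the theorem stated just before Theorem~\ref{Li-unitary}, whose hypothesis is that $L(\pi)$ is already known to be nonzero and irreducible. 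Li proves irreducibility directly, again exploiting the mixed model and the stable range hypothesis; you cannot shortcut this step by appealing to Howe duality.
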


\subsection{Li's method for tempered representations} Various researchers, notably Przebinda \cite{Przebinda-91, Przebinda-93} and He \cite{He-00, He-03}, see also \cite{Gan-Ichino-14, barbasch_etal_23} for more recent treatments, have worked on extending the scope of Li's method beyond the stable range. We will present one such result which is certainly well-known to the experts in the field.

\begin{theorem} \label{tempered-reps} Let $(G',G)$ be an irreducible Type I dual pair over a local field with $G$ smaller group. Let $\pi$ be a tempered irreducible representation of $G$. Then 
\begin{enumerate}
\item the form $({\cdot}, {\cdot})_\pi$ is convergent and non-negative,
\item $L(\pi)$ is non-zero iff $\pi$ enters the theta correspondence,
\item if $L(\pi)$ is nonzero, then it is irreducible. Hence $L(\pi) \simeq \theta(\pi^*)$.
\end{enumerate} 
In particular, the tempered representations of $G$ entering the theta correspondence are sent to unitary representations.
\end{theorem}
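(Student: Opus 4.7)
The plan is to follow Li's original strategy from \cite{Li-89}, extended to the tempered setting by Przebinda \cite{Przebinda-91,Przebinda-93} and He \cite{He-00,He-03}; see also \cite{Gan-Ichino-14,barbasch_etal_23}. The three claims of the theorem split cleanly into: (a) convergence of the defining integral, (b) positivity of the resulting form, and (c) identification of $L(\pi)$ with $\theta(\pi^*)$ via Howe duality. I will handle these in order.

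For convergence of \eqref{Li-form-2}, the key input is a pair of matrix coefficient estimates combined via H\"older's inequality. Since $\pi$ is tempered, the matrix coefficient $s \mapsto \langle v, \pi(s) w\rangle$ lies in $L^{2+\varepsilon}(G/Z_G)$ for every $\varepsilon > 0$, by the Cowling--Haagerup--Howe characterization of temperedness. On the other hand, since $G$ is the smaller member of an irreducible Type I dual pair, the quantitative decay estimates of Howe and Li ensure that smooth matrix coefficients $s \mapsto \langle x, \omega(s) y\rangle$ of $\omega|_G$ lie in $L^{2-\varepsilon}(G/Z_G)$ for some $\varepsilon > 0$ depending on the pair. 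Multiplying and integrating then gives absolute convergence on $\mathbb{S} \otimes V_\pi^\infty$.

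For positivity, the strategy is to recognize the form as a pairing with a positive element of a group $C^*$-algebra. Concretely, the matrix coefficient $\langle x, \omega(\cdot) x\rangle$ is a positive-definite function on $G$, being a diagonal matrix coefficient of a unitary representation; combined with the integrability from the previous step, one shows it defines a positive element of $C^*_r(G)$. Since $\pi$ is tempered it factors through $C^*_r(G)$, and for $\Phi = x \otimes v$ one then reads
\[
(\Phi, \Phi)_\pi = \langle v, \pi(\langle x, \omega(\cdot) x\rangle) v\rangle \geq 0.
\]
For a general element $\Phi = \sum_i x_i \otimes v_i$, one considers the matrix $[\langle x_i, \omega(\cdot) x_j\rangle] \in M_n(C^*_r(G))$, positive by exactly the same argument, and pairs it with the vector $(v_1,\dots,v_n)$ under $\mathrm{id}\otimes \pi$. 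The quotient by the radical then yields a pre-unitary $G'$-module whose completion is $L(\pi)$.

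Part (2) is then essentially formal: $L(\pi) \neq 0$ iff the form is non-trivial, iff $\mathrm{Hom}_G(\omega^\infty, \pi^*) \neq 0$, iff $\pi$ enters the theta correspondence. Part (3) follows from Howe duality \cite{Howe-79,Waldspurger-90,Gan-Takeda-16,Gan-Sun-17}, which gives a unique irreducible quotient $\theta(\pi^*)$ of $\Theta(\pi^*)$; one verifies that the natural map $\Theta(\pi^*) \twoheadrightarrow L(\pi)$ factors through $\theta(\pi^*)$ by showing that the radical of the form contains the maximal proper $G'$-stable subspace of $\Theta(\pi^*)$. The main obstacle, compared with the stable range case of Theorem \ref{Li-unitary}, is positivity: Lemma \ref{L1-exception} gives $L^1$ matrix coefficients in the stable range, allowing direct manipulation, whereas in the tempered case one must genuinely invoke the $C^*$-algebraic realization sketched above, which is precisely the content that Theorem \ref{main-result-one} of the paper will later encode abstractly.
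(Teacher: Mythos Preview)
Your positivity argument for part (1) takes a genuinely different route from the paper's and is worth highlighting. The paper cites \cite[Theorem A.5]{Harris-Li-Sun} as a black box for non-negativity of $(\cdot,\cdot)_\pi$, and only later (Proposition \ref{tempered-hilbert-module}) deduces from this that $\langle x,x\rangle_G \geq 0$ in $C^*_r(G)$. You run the implication the other way: a diagonal matrix coefficient of a unitary representation is a positive-definite function on $G$; once one knows it lies in $C^*_r(G)$ (via the inclusion $\mathcal{S}(G)\subset C^*_r(G)$, though you phrase this through $L^{2-\varepsilon}$), the faithful regular representation sends it to a positive convolution operator on $L^2(G)$, so it is already a positive element of $C^*_r(G)$, and hence $\pi$ of it is a positive operator for any tempered $\pi$. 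The matrix version for general $\Phi=\sum x_i\otimes v_i$ works the same way. This is self-contained and would in fact let one prove Proposition \ref{tempered-hilbert-module} without invoking Harris--Li--Sun. Do note, though, that your closing remark inverts the paper's logical dependencies: Theorem \ref{main-result-one} \emph{uses} Theorem \ref{tempered-reps}, not conversely.

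Parts (2) and (3), however, contain real gaps. Your claim that (2) is ``essentially formal'' is incorrect. The forward direction ($L(\pi)\neq 0 \Rightarrow \pi$ occurs in $\omega$) is easy, but the converse is not: nothing a priori prevents $\mathrm{Hom}_G(\omega^\infty,\pi)\neq 0$ while the hermitian form $(\cdot,\cdot)_{\pi^*}$ vanishes identically. Ruling this out is exactly why the paper invokes the dichotomy principle, citing \cite{Harris-Li-Sun} and \cite[Proposition 16.1]{Gan-Ichino-14}. Likewise in (3), asserting that ``the radical contains the maximal proper $G'$-stable subspace of $\Theta(\pi^*)$'' merely restates the irreducibility of $L(\pi)$; it does not prove it. The actual input is the semistable-range irreducibility theorem \cite[Theorem 1.1]{He-00}, which the paper cites. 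A smaller point on convergence: the $L^{2-\varepsilon}$ bound for oscillator matrix coefficients you invoke is not uniformly available across all Type I pairs with $G$ smaller (it can fail near equal rank), whereas membership in the Harish-Chandra Schwartz algebra $\mathcal{S}(G)$---which always pairs against tempered matrix coefficients---is what the paper uses and covers every case.
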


\begin{proof} It follows from Li's analysis of growth of matrix coefficients of the oscillator representation $\omega$ that the matrix coefficients $\omega_{\mid G}$ lie in the Harish-Chandra Schwartz algebra of $G$. As such, they can be integrated against matrix coefficients of tempered representations. Thus the integral in (\ref{Li-form-2}) converges for any tempered irreducible representation $\pi$ of the smaller group $G$. Non-negativity is more subtle but follows from \cite[Theorem A.5]{Harris-Li-Sun}. Irreducibility of $L(\pi)$ follows from \cite[Theorem 1.1]{He-00}. Indeed, tempered representations of the small group are in the range of \emph{semistability} in the terminology of \cite{He-00}. The claim that $L(\pi)$ is non-zero iff $\pi$ enters the theta correspondence follows essentially from the dichotomy principle. One can find an argument therefore for unitary duals in \cite[Theorem B.41]{Harris-Li-Sun} but the argument works generally. See also \cite[Proposition 16.1]{Gan-Ichino-14}.
\end{proof}


\section{The local oscillator bimodule}
\label{sec:localda}

\subsection{General dual pairs}
Let $(G',G)$ be a Type I dual pair over a local field with $G$ the smaller member. We consider the smooth oscillator representation $\omega$ of $G'\times G$ realized on the space of smooth vectors that we will denote by $\s$. Let $\mathcal{S}(G)$ denote the Harish-Chandra Schwartz algebra of $G$. Recall from the proof of Theorem \ref{tempered-reps} that the matrix coefficients of $\omega$ when viewed as a $G$-representation lie in $\mathcal{S}(G)$. In particular, $\omega_{\mid G}$ is a tempered $G$-representation. 

We equip $\s$ with a {\em right} $\mathcal{S}(G)$-module structure as follows:  for  $x \in \s$
\begin{equation} 
\label{eqright-module} 
x {\cdot} b := \int_G b(s)\omega(s^{-1})(x)  \ \mathrm{d}s, \qquad b \in \mathcal{S}(G).
\end{equation}
Note that $x {\cdot} b$ is well-defined and belongs to $\s$ since $\omega_{\mid G}$ is tempered. Next, we equip $\s$ with an $\mathcal{S}(G)$-valued right linear form
\begin{equation} 
\label{right-inner-product} 
\langle x,y \rangle_G (s) := \langle x, \omega(s)(y) \rangle, \qquad x,y \in \s, \ s \in G
\end{equation}
It is easy to check that this form is Hermitian and compatible with the right $\mathcal{S}(G)$-module structure given above: \[\langle x,y \rangle_{G}^{*}=\langle y, x \rangle\subrangle{G},\quad \langle x, y {\cdot} b  \rangle\subrangle{G} = \langle x,y \rangle\subrangle{G} b,\quad x,y\in\s, \ b\in\mathcal{S}(G).\] Recall that $\mathcal{S}(G)$ is a dense subalgebra of the reduced group $C^*$-algebra $C^*_r(G)$ of $G$. 

\begin{proposition} 
\label{tempered-hilbert-module} 
When equipped with the right module structure \eqref{eqright-module} and the form \eqref{right-inner-product}, the space $\s$ becomes a nondegenerate right pre-Hilbert module over $C^*_r(G)$.
\end{proposition}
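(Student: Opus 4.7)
The plan is to verify axioms (i)--(v) of Definition \ref{def: Hilbert-module}; completeness (vi) is what the ``pre'' qualifier drops. A preliminary observation is that the form \eqref{right-inner-product} genuinely takes values in $\mathcal{S}(G) \subset C^*_r(G)$, since the matrix coefficients of $\omega|_G$ lie in the Harish-Chandra Schwartz algebra, as recalled just above the proposition.

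Axioms (i)--(iii) are routine consequences of the unitarity of $\omega$. Linearity (i) is built into the definition of the form. The Hermitian property (iii) is a one-line check using $\omega(s^{-1})^{*}=\omega(s)$:
\[
\langle x, y\rangle_G^{*}(s) = \overline{\langle x, \omega(s^{-1}) y\rangle} = \langle y, \omega(s) x\rangle = \langle y, x\rangle_G(s).
\]
For the compatibility $\langle x, y \cdot b\rangle_G = \langle x, y\rangle_G \cdot b$ in (ii), one inserts \eqref{eqright-module} and applies Fubini together with unimodularity of $G$ to identify the result with the convolution product of $\mathcal{S}(G)$ inside $C^*_r(G)$. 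At this stage one also verifies that \eqref{eqright-module} defines a right $\mathcal{S}(G)$-action preserving the smooth subspace $\s$, which is a standard consequence of the smoothness of the representation $\omega$.

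The main step, and the only one requiring substantive input, is positivity (v). Since positivity in a $C^{*}$-algebra is detected by its irreducible representations, it suffices to prove $\pi(\langle x, x\rangle_G) \geq 0$ for every irreducible representation $\pi$ of $C^*_r(G)$, equivalently every irreducible tempered representation $\pi$ of $G$. For such a $\pi$ and any smooth vector $w \in V_{\pi}^\infty$, the integrated form \eqref{eq: integrated-form} yields
\[
\langle w, \pi(\langle x, x\rangle_G)\, w\rangle = \int_G \langle x, \omega(s) x\rangle\, \langle w, \pi(s) w\rangle\, \mathrm{d}s,
\]
which is precisely Li's form $(x \otimes w, x \otimes w)_{\pi}$ of \eqref{Li-form-2}. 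By Theorem \ref{tempered-reps}(1) this is non-negative. Density of $V_{\pi}^\infty$ in $\mathcal{H}_\pi$ combined with boundedness of the operator $\pi(\langle x, x\rangle_G)$ extends the inequality to all $w \in \mathcal{H}_\pi$, and hence $\pi(\langle x, x\rangle_G) \geq 0$.

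Non-degeneracy (iv) is then immediate: if $\langle x, x\rangle_G$ vanishes in $C^*_r(G)$ then, being represented by a continuous function, it vanishes identically on $G$, and evaluating at $s = e$ yields $\|x\|^{2}_{\mathcal{H}_\omega} = 0$, so $x=0$. The main obstacle, in summary, is positivity; here one relies on the non-negativity of Li's form for tempered representations (Theorem \ref{tempered-reps}(1)), which is the substantive input packaging the results of Li and of Harris--Li--Sun. Everything else follows formally from $\omega$ being unitary.
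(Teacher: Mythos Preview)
Your proof is correct and follows essentially the same route as the paper: positivity is reduced to checking $\pi(\langle x,x\rangle_G)\geq 0$ for each irreducible tempered $\pi$, which is identified with Li's form and handled by Theorem~\ref{tempered-reps}(1); nondegeneracy follows by evaluating at the identity. The only cosmetic differences are that the paper packages the reduction to irreducibles via the faithful representation $\Pi_r=\bigoplus_{\pi\in\widehat{C^*_r(G)}}\pi$, and does not bother restricting to smooth vectors $w\in V_\pi^\infty$ (the integral already converges for all $w$ since $\langle x,x\rangle_G\in\mathcal{S}(G)$ pairs against arbitrary tempered matrix coefficients).
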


\begin{proof} 
We just need to prove that the form $\langle {\cdot}, {\cdot} \rangle\subrangle{G}$ is positive definite, that is, for any $x \in \s$, we have 
$\langle x,x \rangle\subrangle{G} \geq 0$ as an element of the $C^*$-algebra $C^{*}_{r}(G)$ and that $\langle x,x \rangle\subrangle{G} =0$ only when $x=0$. 

To show the former, it is enough to show that given an injective representation $\Pi$ of $C^{*}_{r}(G)$, the operator $\Pi(\langle \varphi, \varphi \rangle\subrangle{G})$ is positive for every $\varphi \in \s$. If we prove that $\pi(\langle \varphi, \varphi \rangle\subrangle{G})$ is positive for every $\pi$ in the spectrum of $C^{*}_{r}(G)$, then we will be done by considering representation 
$$\Pi_{r} = \bigoplus_{\pi \in \widehat{C^*_{r}(G)}} \pi$$
which is injective. Therefore it suffices to prove that
$$\pi(\langle x,x \rangle\subrangle{G}) \geq 0$$
as an operator on $V_\pi$ for every $\pi$ in the spectrum of $C^{*}_{r}(G)$. $$\langle v,\pi(\langle x,x \rangle\subrangle{G})v \rangle =  \int_{G} \langle x, \omega(s)x \rangle \langle v,  \pi(s)v \rangle \mathrm{d}s.$$

Let $x, x' \in \s$ and consider the operator $\pi(\langle x,x' \rangle\subrangle{G})$ on $V_\pi$. This operator is determined by the bilinear form 
$$\bigl \langle v, \pi(\langle x,x' \rangle\subrangle{G})(v') \bigr \rangle \geq 0$$
for $v,v' \in V_\pi$. We unfold the left hand side
\begin{align}\nonumber\left \langle v, \pi(\langle x, x' \rangle\subrangle{G})(v') \right \rangle &= 
\left \langle v, \int_{G} \langle x, x' \rangle\subrangle{G}(s) \pi(s)(v') \mathrm{d}s \right \rangle \\
\nonumber&=  \int_{G} \langle x, x' \rangle\subrangle{G}(h) \langle v, \pi(s)(v') \rangle \mathrm{d}s \\
\nonumber&=  \int_{G} \langle x, \omega(h)(x') \rangle \langle v, \pi(s)(v') \rangle \mathrm{d}s\\
\label{eq:innerprodequality} &=(x{\otimes}v, x' {\otimes}v')_\pi
\end{align}
where $({\cdot},{\cdot})_\pi$ is the Hermitian form on $\omega \otimes \pi$ (see \ref{Li-form}). The latter is non-negative by Theorem \ref{tempered-reps} (more precisely, thanks to  \cite[Theorem A.5]{Harris-Li-Sun}). Therefore, we conclude that for $x\in\s$ and $v\in V_{\pi}$ we have
\[\left\langle v, \pi(\langle x,x\rangle_{G})v\right\rangle= (x{\otimes}v, x {\otimes}v)_\pi\geq 0,
\]
which implies that $\langle x,x\rangle_{G}\geq 0$ in $C^{*}_{r}(G)$. Now suppose $\langle x,x\rangle_{G}=0$, so that 
\[\langle \omega(s)x,x\rangle=0,\quad \forall s\in G.\]
Then in particular, for $s=e$ we find that $\langle x,x\rangle=0$, so that $x=0$ in the oscillator representation. Since $\s$ injects into the oscillator representation, we conclude that $x=0$ in $\s$.\end{proof}

\begin{definition} 
\label{oscillator-bimodule-tempered} 
We denote by $\T^r$ the right Hilbert $C_r^*(G)$-module obtained by completing 
the pre-Hilbert module $\s$ over the dense subalgebra $\mathcal{S}(G)$.
\end{definition} 

The superscript $r$ on $\T^r$ stands for reduced. 
Recall the action of $G'$ on the vector space $\s$ via the oscillator representation.

\begin{proposition} \label{prop-left-action} The oscillator representation of $G'$ on $\s \subset \T^r$ extends to a representation $G'\to \mathcal{U}(\T^r)$ of $G'$ by unitary module operators. In particular it induces a $*$-homomorphism 
$$\Omega^r :C^{*}(G')\to \End_{C^*_r(G)} ^*(\T^r).$$ 
\end{proposition}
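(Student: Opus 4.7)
The plan is to establish strong continuity of $\omega : G' \to \mathcal{U}(\T^r)$ in three stages: first, show that each $\omega(g')$ preserves the $C^\ast_r(G)$-valued inner product and commutes with the right $\mathcal{S}(G)$-action on $\s$; second, extend by density to an adjointable unitary on $\T^r$; third, upgrade pointwise definition to strong continuity of the resulting homomorphism, which then integrates to $\Omega^r$.

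For the first step, I rely on the fact that the Kudla splittings make $G'$ and $G$ commute as subgroups of $\Mp(W)$, so $\omega(g')\omega(s) = \omega(s)\omega(g')$ for all $g' \in G'$ and $s \in G$. Combined with unitarity of $\omega(g')$ on the ambient Hilbert space this yields
$$\langle \omega(g')x, \omega(g')y\rangle_G(s) = \langle \omega(g')x, \omega(g')\omega(s)y\rangle = \langle x, \omega(s)y\rangle = \langle x, y\rangle_G(s),$$
and the same commutation pulled through the integral \eqref{eqright-module} gives $\omega(g')(x\cdot b) = (\omega(g')x)\cdot b$ for $b \in \mathcal{S}(G)$. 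Thus each $\omega(g')$ is an isometric $\mathcal{S}(G)$-module map on the pre-Hilbert module $\s$, extending uniquely to an isometric $C^\ast_r(G)$-module map on $\T^r$. The identity $\langle \omega(g')x, y\rangle_G = \langle x, \omega(g'^{-1})y\rangle_G$ on $\s$ extends to adjointability on $\T^r$ with $\omega(g')^\ast = \omega(g'^{-1})$, and since these extensions are mutual inverses they lie in $\mathcal{U}(\T^r)$. The map $g' \mapsto \omega(g')$ is then visibly a group homomorphism.

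The remaining point is strong continuity of this homomorphism. Because the action is by isometries it suffices to check continuity on the dense subspace $\s$ at the identity. For $x \in \s$, using the universal bound $\|\cdot\|_{C^\ast_r(G)} \leq \|\cdot\|_{L^1(G)}$ I estimate
$$\|\omega(g')x - x\|_{\T^r}^2 \leq \|\langle \omega(g')x - x, \omega(g')x - x\rangle_G\|_{L^1(G)}.$$
At each $s \in G$ the integrand tends to zero as $g' \to e$ by strong continuity of $\omega$ on the ambient Hilbert space. For an $L^1$-dominating function on a compact neighbourhood $K$ of the identity in $G'$, I apply Li's Schwartz-type growth estimates for matrix coefficients of $\omega_{\mid G}$ (the same ingredient that feeds Theorem \ref{tempered-reps}) to the bounded family $\{\omega(g')x - x : g' \in K\} \subset \s$. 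Dominated convergence then yields $\T^r$-norm continuity at $e$, hence at every $g'_0 \in G'$ by the isometry property. With strong continuity in hand, \cite[Proposition C.17]{Raeburn-Williams} integrates the unitary representation to the desired $\ast$-homomorphism $\Omega^r : C^\ast(G') \to \End^\ast_{C^\ast_r(G)}(\T^r)$.

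The main obstacle lies in the third step. Continuity in the Hilbert space norm on the oscillator representation is too weak to control the $C^\ast_r(G)$-valued inner product, so one cannot read strong continuity of the $G'$-action directly off the classical strong continuity of $\omega$. What saves the day is a genuine uniform $L^1$-bound on a neighbourhood of the identity in $G'$, which is precisely what Li's quantitative growth estimates on matrix coefficients of the oscillator representation provide.
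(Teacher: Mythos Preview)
Your first two steps are correct and are exactly what the paper does: commutation of the $G'$- and $G$-actions shows that $G'$ preserves the $C^*_r(G)$-valued inner product on $\s$, whence each $\omega(g')$ extends to a unitary in $\End^*_{C^*_r(G)}(\T^r)$. The paper's own proof is considerably more terse here and simply invokes \cite[Proposition C.17]{Raeburn-Williams} for the integration step, without writing out a verification of strong continuity.

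Your third step, however, contains a genuine gap. You bound the module norm by
\[
\|\omega(g')x-x\|_{\T^r}^2 \leq \|\langle \omega(g')x-x,\,\omega(g')x-x\rangle_G\|_{L^1(G)}
\]
and then appeal to an $L^1$-dominating function coming from Li's growth estimates. But in the general Type~I setting of this proposition, the matrix coefficients of $\omega_{|G}$ are only known to lie in the Harish--Chandra Schwartz algebra $\mathcal{S}(G)$, not in $L^1(G)$; the inclusion $\mathcal{S}(G)\subset L^1(G)$ fails in general (indeed $L^1$-integrability is precisely the additional input in the stable range case, see Lemma~\ref{L1-exception} and Section~\ref{right-module}). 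So the inequality above is vacuous, and there is no $L^1$-majorant available for dominated convergence. Your argument would be fine for Proposition~\ref{left-action-stable}, but not here.

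A repair is available: the inclusion $\mathcal{S}(G)\hookrightarrow C^*_r(G)$ is continuous for the Fr\'echet topology on $\mathcal{S}(G)$, so it suffices to show that $g'\mapsto \langle \omega(g')x,\omega(g')x\rangle_G$ is continuous into $\mathcal{S}(G)$. In the non-archimedean case this is immediate since each $x\in\s$ has a compact open stabilizer in $G'$; in the archimedean case one uses that $\omega$ is smooth on $\s$ and that Li's Schwartz bounds are locally uniform in the smooth parameters, controlling a fixed Schwartz seminorm rather than an $L^1$-norm.
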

\begin{proof} 
Since the action of $G'$ and $G$ on $\s$ are unitary and commute with one another, $G'$ preserves the $C_r^*(G)$-valued inner product on $\s$.
By standard $C^{*}$-module techniques, it follows that $\|\omega(g)\|_{\End_{C^*_r(G)} ^*(\T^r)}=1$, so each $\omega(g)$ extends to a unitary module operator $\Omega^r(g)\in\End_{C^*_r(G)}^*(\T^r)$.
 
As discussed in Section \ref{module-integrated-form}, for $a\in L^{1}(G')$ the integral
\begin{equation}\label{left-action} \Omega^r(a):= \int_{G'} a(g) \omega(g)\mathrm{d}g
\end{equation}
converges strongly to an operator in $\End_{C^*_r(G)} ^*(\T^r)$, and defines a $*$-homomorphism $\Omega^r :C^{*}(G')\to \End_{C^*_r(G)} ^*(\T^r).$
\end{proof}

\begin{theorem}
\label{main-result-one} 
Let $(G',G)$ be a Type I  dual pair with $G$ the smaller member. The induction functor ${\rm Ind}_{G}^{G'}(\T^r)$ implemented by the oscillator bimodule $\T^{r}$ captures the theta lifting of tempered irreducible representations of $G$. More precisely, if $\pi$ is an irreducible tempered representation of $G$ then we have an isomorphism
$$\theta(\pi^*) \simeq {\rm Ind}_{G}^{G'}(\T^r, \pi)$$
of $G'$-representations. 
\end{theorem}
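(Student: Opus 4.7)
The plan is to unwind both sides of the claimed isomorphism and identify them with Li's model $L(\pi)$. By the definition recalled in Section \ref{C*-correspondence}, the Hilbert space ${\rm Ind}_{G}^{G'}(\T^r,\pi)=\T^r\otimes_{C^*_r(G)}V_\pi$ is the completion of $\T^r\otimes^{\mathrm{alg}} V_\pi$ modulo the radical of the inner product \eqref{localisation-inner-product}. Since $\s$ is dense in $\T^r$ and the smooth vectors $V_\pi^\infty$ are dense in $V_\pi$, Lemma \ref{lem: dense subspace} allows us to replace $\T^r\otimes^{\mathrm{alg}} V_\pi$ by $\s\otimes^{\mathrm{alg}} V_\pi^\infty$ and still obtain a dense subspace of the internal tensor product after quotienting by the smooth radical $\mathcal{N}_\pi$.

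The first key step is to identify the smooth radical $\mathcal{N}_\pi$ with Li's radical. This is essentially done already: the computation \eqref{eq:innerprodequality} inside the proof of Proposition \ref{tempered-hilbert-module} yields
\[
\langle x\otimes v, x'\otimes v'\rangle_{\T^r\otimes_{C^*_r(G)}V_\pi}
=\langle v,\pi(\langle x,x'\rangle_G)v'\rangle_{V_\pi}
=(x\otimes v,x'\otimes v')_\pi
\]
for $x,x'\in\s$ and $v,v'\in V_\pi^\infty$, where the right-hand side is precisely Li's sesquilinear form \eqref{Li-form-2}. Hence $\mathcal{N}_\pi$ coincides with the radical of $({\cdot},{\cdot})_\pi$, and Lemma \ref{lem: dense subspace} gives an isometric, dense inclusion
\[
\bigl(\s\otimes^{\mathrm{alg}} V_\pi^\infty\bigr)\big/\mathcal{N}_\pi \hookrightarrow \T^r\otimes_{C^*_r(G)}V_\pi.
\]
Since Theorem \ref{tempered-reps} asserts that $L(\pi)$ is, by definition, the Hilbert completion of the left-hand side with respect to the Li form, we obtain an isometric isomorphism of Hilbert spaces $L(\pi)\simeq \T^r\otimes_{C^*_r(G)}V_\pi$.

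Next, I would check $G'$-equivariance. The action of $G'$ on the inducing side is $\Omega^r(g)\otimes 1$, which on the dense subspace $\s\otimes^{\mathrm{alg}} V_\pi^\infty$ is $\omega(g)(x)\otimes v$; this is exactly the $G'$-action on $L(\pi)$ used in Li's construction. Passing the intertwiner from $L(\pi)$ obtained above through this identification, one sees that integration of the $G'$-action recovers the integrated $C^*(G')$-action $\Omega^r$ of Proposition \ref{prop-left-action}, so the map is $G'$-equivariant. Finally, since $\pi$ is tempered and irreducible, Theorem \ref{tempered-reps} gives $L(\pi)\simeq\theta(\pi^*)$ (interpreting $\theta(\pi^*)=0$ when $\pi$ does not enter the correspondence, in which case $L(\pi)=0$ as well), and the composition $\theta(\pi^*)\simeq L(\pi)\simeq {\rm Ind}_{G}^{G'}(\T^r,\pi)$ yields the desired isomorphism.

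The main obstacle, and the reason the present $C^*$-theoretic formulation is not formal, is the identification of the smooth radical $\mathcal{N}_\pi$ with Li's radical \emph{at the level of Hilbert modules}: one must know that the $C^*_r(G)$-valued positivity encoded in $\langle x,x\rangle_G\geq 0$ translates to non-negativity of $({\cdot},{\cdot})_\pi$ for all tempered $\pi$ simultaneously. This is exactly the content used in Proposition \ref{tempered-hilbert-module} via \cite[Theorem A.5]{Harris-Li-Sun}, so no new analytic input is needed; everything else is a matter of matching definitions, invoking Lemma \ref{lem: dense subspace}, and citing Theorem \ref{tempered-reps}.
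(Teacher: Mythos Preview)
Your proof is correct and follows essentially the same route as the paper: identify Li's form with the localisation inner product via the computation \eqref{eq:innerprodequality}, apply Lemma \ref{lem: dense subspace} with the dense subspaces $\s\subset\T^r$ and $V_\pi^\infty\subset V_\pi$ to match the completions, note $G'$-equivariance, and invoke Theorem \ref{tempered-reps} for the identification $L(\pi)\simeq\theta(\pi^*)$. Your write-up is more explicit about the zero case and about the role of positivity, but the argument is the same.
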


\begin{proof}
By Li's construction, $\theta(\pi^{*})$ is realised on the Hilbert space completion of $\mathbb{S}\otimes^{\textnormal{alg}}V^{\infty}_{\pi}/\mathcal{N}_{\pi}$, using the bilinear form $(\cdot,\cdot)_{\pi}$. By Equation \eqref{eq:innerprodequality} we have that \[(\phi\otimes v,\psi\otimes w)_{\pi}=\langle v, \pi(\langle\phi,\psi\rangle_{G})w\rangle_{V_{\pi}},\] so that by Lemma \ref{lem: dense subspace}, the Hilbert space closure of $\mathbb{S}\otimes^{\textnormal{alg}}V^{\infty}_{\pi}/\mathcal{N}_{\pi}$ coincides with $\Theta\otimes_{C^{*}_{r}(G)}V_{\pi}$. It is immediate that this identification is compatible with the $G'$-representation.
\end{proof}

As an immediate corollary we obtain the following.
\begin{corollary} \label{main-result-one-cor} Let $(G',G)$ be a type I dual pair with $G$ the smaller member. Local theta lifting of tempered irreducible representations of $G$ arises from an induction functor associated to a $(C^*(G'),C_r^*(G))$-correspondence (see \ref{C*-correspondence}). Hence it is functorial and continuous. 
\end{corollary}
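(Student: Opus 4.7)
The plan is to collect the ingredients already assembled in Propositions \ref{tempered-hilbert-module} and \ref{prop-left-action} and in Theorem \ref{main-result-one}, and then invoke the general properties of Rieffel induction recalled in Section \ref{module-integrated-form}. The corollary is an assemblage, not a new analytic assertion: all the substantive work (positivity of the inner product, convergence of the matrix-coefficient integrals, extension of the $G'$-action to the Hilbert module) has already been discharged in the propositions above.

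First I would verify the $C^*$-correspondence structure. Proposition \ref{tempered-hilbert-module} gives $\T^r$ the structure of a right Hilbert $C^*_r(G)$-module, and Proposition \ref{prop-left-action} produces the $*$-homomorphism $\Omega^r : C^*(G') \to \End^*_{C^*_r(G)}(\T^r)$ integrating the oscillator action of $G'$. Together these exhibit $\T^r$ as a $(C^*(G'),C^*_r(G))$-correspondence in the sense of Section \ref{C*-correspondence}. The internal tensor product construction therefore yields a Rieffel induction functor
$${\rm Ind}_G^{G'}(\T^r) : {\rm Rep}(C^*_r(G)) \longrightarrow {\rm Rep}(C^*(G')),$$
which, under the equivalences ${\rm TRep}(G) \simeq {\rm Rep}(C^*_r(G))$ and ${\rm URep}(G') \simeq {\rm Rep}(C^*(G'))$ recorded in Section \ref{Cstar}, becomes a functor ${\rm TRep}(G)\to {\rm URep}(G')$ as in \eqref{induction-group-temp}.

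Next I would invoke Theorem \ref{main-result-one}: for every tempered irreducible $\pi$ of $G$ the induced representation ${\rm Ind}_G^{G'}(\T^r,\pi)$ is unitarily equivalent to the theta lift $\theta(\pi^*)$. Thus, on the level of objects, local theta lifting of tempered irreducibles coincides with the action of the induction functor ${\rm Ind}_G^{G'}(\T^r)$, which is precisely the first assertion of the corollary.

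The functoriality and continuity claim is then a formal consequence of the general theory of Rieffel induction recalled in Section \ref{module-integrated-form}: for any $(A,B)$-correspondence, the induction functor respects intertwiners, unitary equivalence and direct sums, and is continuous with respect to the Fell topology on both categories of representations. Transported across the homeomorphisms $\widehat{G} \leftrightarrow \widehat{C^*(G)}$ and $\widehat{G}_t \leftrightarrow \widehat{C^*_r(G)}$, Fell continuity becomes continuity with respect to weak containment on ${\rm TRep}(G)$ and ${\rm URep}(G')$, which is the asserted continuity. I expect no genuine obstacle in this step; the only thing worth being careful about is that the continuity is inherited from the $C^*$-algebraic side via the (topological) bijections between unitary duals and spectra, so the statement should be phrased at the $C^*$-level and then pushed back to groups.
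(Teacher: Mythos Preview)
Your proposal is correct and matches the paper's treatment: the paper gives no explicit proof, presenting the corollary as immediate from Theorem \ref{main-result-one} together with the general functoriality and Fell-continuity of Rieffel induction recalled in Section \ref{module-integrated-form}. Your write-up simply spells out those implicit steps, and there is nothing to add or correct.
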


\subsection{Stable range}
\label{right-module} 
Let $(G',G)$ be a Type I dual pair over a local field that is in the stable range with $G$ the smaller member.  Assume that $(G',G)$ is not the pair $(O_{2n,2n},Sp_{2n})$. 

We again consider the smooth oscillator representation $\omega$ of $G'\times G$ realized on the space of smooth vectors $\s$. 

By Lemma \ref{L1-exception}
the matrix coefficients of $\omega$ when viewed as a $G$-representation lie in not only in $\mathcal{S}(G)$ but also in $L^1(G)$. Thus, the inner product (\ref{right-inner-product}) takes its values in $\mathcal{S}(G) \cap L^1(G)$. The intersection $\mathcal{S}(G) \cap L^1(G)$ is a $*$-algebra that admits dense $*$-preserving inclusions
\[C^{*}_{r}(G)\leftarrow \mathcal{S}(G) \cap L^1(G)\rightarrow C^{*}(G),\]
into both the full and reduced $C^{*}$-algebras of $G$. Since $\mathbb{S}$ is a right  module over $\mathcal{S}(G)$ via (\ref{right-inner-product}), it also a right module over $\mathcal{S}(G) \cap L^1(G)$. Thus, by Lemma \ref{lem: pre-inner-product}, $\mathbb{S}$ admits a completion $\T$ as a right Hilbert $C^{*}$-module over $C^{*}(G)$. 

\begin{proposition} 
\label{construct-1} 
Equipped with the right module structure (\ref{eqright-module}) and the form (\ref{right-inner-product}), the space $\s$ becomes a nondegenerate right pre-Hilbert module over $C^*(G)$.
\end{proposition}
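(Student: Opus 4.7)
The proof will closely parallel that of Proposition \ref{tempered-hilbert-module}, with the key upgrade being the replacement of the tempered positivity input \cite[Theorem A.5]{Harris-Li-Sun} by Li's stable-range positivity, Theorem \ref{Li-unitary}. The Hermitian property of \eqref{right-inner-product} and its compatibility with the right action \eqref{eqright-module} are formal and identical to the tempered case, so by Lemma \ref{lem: pre-inner-product} only positivity in $C^{*}(G)$ and nondegeneracy remain to be checked.

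For positivity I would use the standard device: the representation $\bigoplus_{\pi\in\widehat{G}} \pi$ is a faithful $*$-representation of $C^{*}(G)$, so it suffices to show that $\pi(\langle x,x\rangle_{G})\geq 0$ for every irreducible unitary representation $\pi$ of $G$. Unfolding as in the derivation of \eqref{eq:innerprodequality} gives
\[
\langle v,\pi(\langle x,x\rangle_{G})v\rangle = (x\otimes v, x\otimes v)_{\pi},
\]
where $(\cdot,\cdot)_{\pi}$ is Li's form \eqref{Li-form}. Theorem \ref{Li-unitary} supplies the required non-negativity for every irreducible unitary $\pi$, subject to the standing exclusion (when $G$ is the metaplectic group) of representations factoring through the symplectic quotient. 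In that excluded case I would run a quick central character argument: the nontrivial element $z$ in the kernel of $G\twoheadrightarrow \Sp$ acts on the oscillator representation by the nontrivial scalar, while $\pi(z)=1$, so the integrand $\langle x,\omega(s)x\rangle\langle v,\pi(s)v\rangle$ is anti-invariant under the translation $s\mapsto zs$. Pairing points of the double cover in the Haar integral forces the integral, and hence $(\cdot,\cdot)_{\pi}$, to vanish identically; in particular it is non-negative. Thus $\langle x,x\rangle_{G}\geq 0$ in $C^{*}(G)$ for every $x\in\s$.

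Nondegeneracy is inherited from the reduced setting: the canonical surjection $C^{*}(G)\to C^{*}_{r}(G)$ is norm-decreasing on the common dense subalgebra $\mathcal{S}(G)\cap L^{1}(G)$, so $\|\langle x,x\rangle_{G}\|_{C^{*}(G)}=0$ implies $\|\langle x,x\rangle_{G}\|_{C^{*}_{r}(G)}=0$, and Proposition \ref{tempered-hilbert-module} then gives $x=0$. Alternatively, one may appeal directly to the continuity of the matrix coefficient function $s\mapsto \langle x,\omega(s)x\rangle$ and evaluate at $s=e$.

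The real obstacle is entirely representation-theoretic rather than $C^{*}$-algebraic: positivity of Li's form $(\cdot,\cdot)_{\pi}$ for \emph{every} irreducible unitary $\pi$ of $G$, not merely the tempered ones, is the analytic core of Li's stable-range theory. Our task reduces to repackaging that input together with the elementary central character trick that disposes of the metaplectic exclusion.
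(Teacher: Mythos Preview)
Your proof is correct and follows essentially the same route as the paper: reduce positivity in $C^*(G)$ to positivity of $\pi(\langle x,x\rangle_G)$ for every $\pi\in\widehat{G}$ via the faithful representation $\bigoplus_{\pi\in\widehat{G}}\pi$, unfold to Li's form, and invoke Theorem~\ref{Li-unitary}, with nondegeneracy handled by evaluation at $s=e$. Your central character argument disposing of the metaplectic exclusion is a detail the paper glosses over, so your write-up is in fact slightly more complete.
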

\begin{proof}
The proof is formally identical to the proof of Proposition \ref{tempered-hilbert-module}, but now in order to prove non-negativity of the inner product, we use the injective unitary $G$-representation
$$\Pi = \bigoplus_{\pi \in \widehat{C^*(G)}} \pi.$$
For any $\pi\in \widehat{C^*(G)}$, the equality $\langle v, \pi(\langle x, x' \rangle\subrangle{G})(v') \rangle=(x\otimes v, x'\otimes v')_{\pi}$ is proved as in \eqref{eq:innerprodequality}, and non-negativity in this case follows from Li's result Theorem \ref{Li-unitary}.
\end{proof}
\begin{definition} \label{oscillator-bimodule-stable} We denote by $\T$ the right Hilbert $C^*(G)$-module obtained by completing 
the pre-Hilbert module $\mathbb{S}$ over the subalgebra $\mathcal{S}(G) \cap L^1(G)$.
\end{definition}

The same arguments as in the proof of Proposition \ref{prop-left-action} give us the analogous result in the stable range case.
\begin{proposition} \label{left-action-stable} The oscillator representation of $G'$ on $\s \subset \T$ extends to a representation $G'\to \mathcal{U}(\T)$ of $G'$ by unitary module operators. In particular it induces a $*$-homomorphism 
$$\Omega :C^{*}(G')\to \End_{C^*(G)} ^*(\T).$$ 
\end{proposition}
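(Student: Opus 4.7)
The plan is to imitate the proof of Proposition \ref{prop-left-action} verbatim, replacing $C^*_r(G)$ with $C^*(G)$ throughout. The groups $G'$ and $G$ act on $\s$ by commuting unitary operators through the oscillator representation $\omega$. In particular, for every $g\in G'$ the operator $\omega(g)$ commutes with the right $\mathcal{S}(G)\cap L^1(G)$-action on $\s$ defined in \eqref{eqright-module}, and preserves the hermitian form in \eqref{right-inner-product} in the sense that $\langle \omega(g)x,\omega(g)y\rangle_G=\langle x,y\rangle_G$ for all $x,y\in\s$. Hence $\omega(g)$ is isometric for the $C^*(G)$-valued inner product built in Proposition \ref{construct-1}, so by the standard Hilbert $C^*$-module estimate $\|{\cdot}\|_{\T}=\|\langle{\cdot},{\cdot}\rangle_G\|_{C^*(G)}^{1/2}$ it has operator norm $1$. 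It therefore extends by density to a bounded $C^*(G)$-linear operator $\Omega(g)\in\End_{C^*(G)}^*(\T)$, whose inverse is provided by $\Omega(g^{-1})$, so that $\Omega(g)$ is in fact unitary and adjointable with $\Omega(g)^*=\Omega(g^{-1})$.

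Next, strong continuity of $g\mapsto \Omega(g)\in\mathcal{U}(\T)$ follows from the strong continuity of $\omega$ on $\s$, the density of $\s$ in $\T$, and the uniform bound $\|\Omega(g)\|=1$ via the usual $\varepsilon/3$ argument. Then, exactly as in Section \ref{module-integrated-form}, for every $a\in L^1(G')$ the Bochner integral
\[
\Omega(a):=\int_{G'} a(g)\,\Omega(g)\,\mathrm{d}g
\]
converges strongly in $\End_{C^*(G)}^*(\T)$ and defines a contractive $*$-representation of the Banach $*$-algebra $L^1(G')$ by adjointable $C^*(G)$-module operators. By the universal property of $C^*(G')$ as the enveloping $C^*$-algebra of $L^1(G')$, this representation extends uniquely to a $*$-homomorphism $\Omega:C^*(G')\to\End_{C^*(G)}^*(\T)$, as required.

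I do not anticipate any genuine obstacle in this argument: all the hard work has been absorbed into Proposition \ref{construct-1}, where Li's non-negativity result (Theorem \ref{Li-unitary}) was invoked to build the $C^*(G)$-valued inner product on $\s$. Once $\T$ exists as a Hilbert $C^*(G)$-module, the extension of the $G'$-action and the passage to $C^*(G')$ are purely formal, and the proof transports unchanged from the tempered to the stable range setting.
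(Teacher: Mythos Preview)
Your proposal is correct and follows exactly the approach the paper itself takes: the paper simply states that the same arguments as in the proof of Proposition \ref{prop-left-action} apply, and your write-up is a faithful (and slightly more detailed) transcription of that proof with $C^*_r(G)$ replaced by $C^*(G)$ throughout.
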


The proof of Theorem \ref{main-result-one} now goes through verbatim to obtain our second main result.
\begin{theorem}\label{main-result-two} Let $(G',G)$ be a dual pair in the stable range with $G$ the smaller member. Assume that $(G',G)$ is not the pair $(O_{2n,2n},Sp_{2n})$.  The induction functor ${\rm Ind}_{G}^{G'}(\T)$ implemented by the oscillator bimodule $\T$ captures the theta correspondence. More precisely, if $\pi$ is an irreducible unitary representation of $G$ then we have an isomorphism
$$\theta(\pi^*) \simeq {\rm Ind}_{G}^{G'}(\T, \pi)$$
of $G'$-representations. 
\end{theorem}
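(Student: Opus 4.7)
The plan is to rerun the proof of Theorem \ref{main-result-one} verbatim in the stable range setting, substituting $C^*(G)$ for $C^*_r(G)$ throughout and substituting Li's stable range Theorem \ref{Li-unitary} for Theorem \ref{tempered-reps}. That this substitution is legitimate is exactly the content of Lemma \ref{L1-exception}, which supplies $L^1$-integrability of the matrix coefficients of $\omega|_G$ under the exclusion of the pair $(O_{2n,2n}, Sp_{2n})$, together with Proposition \ref{construct-1}, which upgrades the $G$-valued form $\langle \cdot, \cdot\rangle_G$ to a bona fide positive inner product valued in the full group $C^*$-algebra.

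Concretely, fix an irreducible unitary representation $\pi$ of $G$. By Theorem \ref{Li-unitary}, the sesquilinear form $(\cdot,\cdot)_\pi$ on $\mathbb{S} \otimes^{\mathrm{alg}} V_\pi^\infty$ is convergent and non-negative, and its Hilbert space quotient $L(\pi)$ is nonzero, irreducible, and isomorphic to $\theta(\pi^*)$ as a $G'$-representation. The same unfolding as in the computation \eqref{eq:innerprodequality} yields
\[
(\phi \otimes v,\; \psi \otimes w)_\pi \;=\; \bigl\langle v,\; \pi(\langle \phi,\psi\rangle_G)\, w\bigr\rangle_{V_\pi}
\]
for all $\phi,\psi\in\mathbb{S}$ and $v,w\in V_\pi^\infty$, and this identity now makes sense with $\langle \phi,\psi\rangle_G \in C^*(G)$ thanks to Proposition \ref{construct-1}.

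Applying Lemma \ref{lem: dense subspace} with $\mathcal{X}=\mathbb{S}\subset \T$ and $\mathcal{V}_\pi = V_\pi^\infty \subset V_\pi$ then identifies the Hilbert completion of $(\mathbb{S}\otimes^{\mathrm{alg}} V_\pi^\infty)/\mathcal{N}_\pi$ with the internal tensor product $\T\otimes_{C^*(G)} V_\pi$, which by definition carries the representation ${\rm Ind}_{G}^{G'}(\T,\pi)$ via the left action $\Omega : C^*(G')\to \End^*_{C^*(G)}(\T)$ provided by Proposition \ref{left-action-stable}. Since both sides are built from the same action $g\cdot(\phi\otimes v) = \omega(g)\phi\otimes v$ of $G'$ on elementary tensors, the identification is automatically $G'$-equivariant, delivering $\theta(\pi^*)\simeq {\rm Ind}_G^{G'}(\T,\pi)$.

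There is essentially no genuine obstacle beyond what was already handled upstream: the one place where the stable range hypothesis does real work is in establishing positivity of the inner product at the level of the full algebra $C^*(G)$, which was done in Proposition \ref{construct-1} by summing over all $\pi\in\widehat{C^*(G)}$ and invoking the non-negativity clause of Theorem \ref{Li-unitary}(1). After that, everything reduces to a formal transcription of the tempered argument, with Lemma \ref{lem: dense subspace} doing the job of matching Li's quotient space with the Rieffel internal tensor product.
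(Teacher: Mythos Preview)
Your proposal is correct and follows exactly the paper's approach: the paper's proof of Theorem~\ref{main-result-two} is literally the single sentence ``The proof of Theorem~\ref{main-result-one} now goes through verbatim to obtain our second main result,'' and you have spelled out precisely that verbatim argument with the substitutions $C^*_r(G)\rightsquigarrow C^*(G)$, Theorem~\ref{tempered-reps}~$\rightsquigarrow$~Theorem~\ref{Li-unitary}, and Proposition~\ref{tempered-hilbert-module}~$\rightsquigarrow$~Proposition~\ref{construct-1}. The key ingredients you cite---equation~\eqref{eq:innerprodequality}, Lemma~\ref{lem: dense subspace}, and the $G'$-equivariance observation---are exactly those used in the proof of Theorem~\ref{main-result-one}.
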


As an immediate corollary we derive the following.
\begin{corollary} \label{main-result-two-cor} Let $(G',G)$ be a dual pair in the stable range with $G$ the smaller member. Assume that $(G',G)$ is not the pair $(O_{2n,2n},Sp_{2n})$. Local theta correspondence for $(G',G)$ arises from an induction functor associated to a $(C^*(G'),C^*(G))$-correspondence (see \ref{C*-correspondence}). Hence it is functorial and continuous. 
\end{corollary}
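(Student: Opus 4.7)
My plan is to observe that essentially all the work has been done by Proposition \ref{construct-1}, Proposition \ref{left-action-stable}, and Theorem \ref{main-result-two}; the corollary is a matter of assembling these ingredients into the language of correspondences and invoking the general properties of Rieffel induction recalled in Sections \ref{C*-correspondence} and \ref{module-integrated-form}.

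First I would recall that Proposition \ref{construct-1} and Definition \ref{oscillator-bimodule-stable} produce a right Hilbert $C^{*}(G)$-module $\T$ obtained by completing $\mathbb{S}$ with respect to the $C^{*}(G)$-valued inner product \eqref{right-inner-product}; here the stable range assumption and the exclusion of $(O_{2n,2n},Sp_{2n})$ enter through Lemma \ref{L1-exception}, which ensures that the matrix coefficients of $\omega_{|G}$ lie in $\mathcal{S}(G)\cap L^{1}(G)$ and therefore land in a common dense $*$-subalgebra of $C^{*}(G)$. Proposition \ref{left-action-stable} then upgrades the left $G'$-action on $\mathbb{S}$ to a $*$-homomorphism $\Omega\colon C^{*}(G')\to \End^{*}_{C^{*}(G)}(\T)$, which is exactly the datum required by the definition in Section \ref{C*-correspondence} to exhibit $\T$ as a $(C^{*}(G'),C^{*}(G))$-correspondence.

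Given the correspondence, Section \ref{module-integrated-form} supplies the induction functor
\[
\mathrm{Ind}_{G}^{G'}(\T)\colon {\rm URep}(G)\longrightarrow {\rm URep}(G'),
\]
as in \eqref{induction-group}, together with its compatibility with unitary equivalences, direct sums, and the Fell topology (weak containment). These are precisely the assertions of functoriality and continuity in the corollary. Hence no additional verification is needed beyond invoking the general results about Rieffel induction.

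Finally, to see that this functor captures the theta correspondence, I would cite Theorem \ref{main-result-two}, which provides the isomorphism $\theta(\pi^{*})\simeq \mathrm{Ind}_{G}^{G'}(\T,\pi)$ on irreducible unitary $\pi$; since the theta lift for stable range dual pairs is defined on all irreducible unitary representations of $G$ (excluding those factoring through the symplectic group in the metaplectic case, which are mapped to zero), this shows that the induction functor restricted to irreducibles reproduces the theta correspondence. The only subtle point, and the one I would flag as the conceptual heart rather than an obstacle, is the identification step already performed in Theorem \ref{main-result-two} via Lemma \ref{lem: dense subspace}; the corollary itself requires nothing new beyond packaging.
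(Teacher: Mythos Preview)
Your proposal is correct and matches the paper's approach: the paper states the corollary as an immediate consequence of Theorem \ref{main-result-two} without giving a separate proof, and your write-up simply unpacks what ``immediate'' means by assembling Propositions \ref{construct-1} and \ref{left-action-stable} into a $(C^*(G'),C^*(G))$-correspondence and invoking the general functoriality and Fell-continuity of Rieffel induction from Section \ref{module-integrated-form}.
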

To conclude we note that the bimodules $\T$ and $\T^{r}$ are related via the internal tensor product. More precisely, the $*$-homomorphism $C^{*}(G)\to C^{*}_{r}(G)$ can be used to form the tensor product $\T\otimes_{C^{*}(G)}C^{*}_{r}(G)$. The densely defined map
\begin{equation}
\label{akjnakjdndajn}
\mathbb{S}\otimes_{\mathcal{S}(G)}^{\textnormal{alg}}C^{*}_{r}(G)\to \T^{r},\quad  x\otimes f\mapsto x\cdot f,
\end{equation}
preserves the inner products and has dense range. As such, the map \eqref{akjnakjdndajn} induces an isomorphism of $(C^{*}(G'),C^{*}_{r}(G))$-correspondences $\T\otimes_{C^{*}(G)}C^{*}_{r}(G)\xrightarrow{\sim} \T^{r}$.

\subsection{Irreducibility of big theta} 
\label{bigh-theta}

We pause to make a small observation regarding the so-called \emph{big theta lift} and recent work of Loke and Przebinda \cite{Loke-Przebinda}. Let $(G',G)$ be a Type I dual pair over a local field $F$ with $G$ the smaller member. We assume that $F$ is nonarchimedean. As usual, we let $\omega$ be the smooth oscillator representation of $G'\times G$ realized on the space of smooth vectors $\s$.  Given an irreducible smooth\footnote{Recall that $\pi$ is smooth if every $v \in V_\pi$ has a compact open stabilizer in $G$. It is known that irreducible smooth representations are admissible.} representation $(\pi,V_\pi)$ of $G$ that enters the theta correspondence, put 
$$J_\pi := \bigcap_{\phi \in {\rm Hom}_{G}(\s,V_\pi)} {\rm Ker} \ \phi.$$
The space $J_\pi$ is $G' \times G$-invariant and it is known that we have the isomorphism of $G' \times G$-representations
$$\s / J_\pi \simeq V_{\Theta(\pi)}  \otimes  V_{\pi}$$
for some smooth $G'$-representation $(\Theta(\pi),V_{\Theta(\pi)})$. This $G'$-representation $\Theta(\pi)$ is called the {\em big theta lift} of $\pi$.
It serves as a proxy to the theta lift $\theta(\pi)$ of $\pi$ in that $\theta(\pi)$ is the unique irreducible quotient of $\Theta(\pi)$.

A natural question that has importance in certain applications is when $\Theta(\pi)$ equals $\theta(\pi)$, in other words, when $\Theta(\pi)$ is irreducible. We will show that our results in the previous section, together with a recent result of Loke and Przebinda \cite{Loke-Przebinda}, allow us to say something in this direction.

The convolution algebra $C_c^\infty(G)$ of locally constant, compactly supported functions on $G$ is called the {\em Hecke algebra} of $G$ and is denoted $\mathcal{H}(G)$. Every smooth representation of $G$ is a module over $\mathcal{H}(G)$. The recent preprint \cite{Loke-Przebinda} contains the result below.

\begin{proposition} \label{Loke-Przebinda} Let $(G',G)$ be a Type I dual pair over a nonarchimedean local field with $G$ the smaller group. For an irreducible smooth representation $(\pi,V_\pi)$ of $G$ that enters the theta correspondence, we have
$$\Theta(\pi) \simeq \s  \otimes^{\textnormal{alg}}_{\mathcal{H}(G)} V_\pi^\vee$$
where $\otimes^{\textnormal{alg}}_{\mathcal{H}(G)}$ denotes algebraic balanced tensor product over the Hecke algebra $\mathcal{H}(G)$ and $\pi^\vee$ is the smooth dual of $\pi$.
\end{proposition}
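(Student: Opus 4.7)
My plan is to construct an explicit natural $G'$-equivariant surjection from the algebraic Hecke tensor product onto $V_{\Theta(\pi)}$ and then establish injectivity via a Schur-type computation combined with an analysis of the kernel $J_\pi$.

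First, using the defining quotient $q : \s \twoheadrightarrow \s/J_\pi \simeq V_{\Theta(\pi)} \otimes V_\pi$, I would define
$$F : \s \otimes V_\pi^\vee \longrightarrow V_{\Theta(\pi)}, \qquad \phi \otimes \lambda \longmapsto (\mathrm{id}_{V_{\Theta(\pi)}} \otimes \lambda)(q(\phi)).$$
Since $q$ is $G'\times G$-equivariant with $G$ acting trivially on $V_{\Theta(\pi)}$ and as $\pi$ on $V_\pi$, while $G'$ acts as $\Theta(\pi)$ on $V_{\Theta(\pi)}$ and trivially on $V_\pi$, the map $F$ is automatically $G'$-equivariant. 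To verify that $F$ factors through $\s \otimes^{\textnormal{alg}}_{\mathcal{H}(G)} V_\pi^\vee$, I would equip $\s$ with the right $\mathcal{H}(G)$-action $\phi \cdot f := \int_G f(s)\omega(s^{-1})\phi \, \mathrm{d}s$ and combine the identity $q(\phi \cdot f) = (1 \otimes \pi(\check f))q(\phi)$, where $\check f(s) := f(s^{-1})$, with the standard relation $\pi^\vee(f)\lambda = \lambda \circ \pi(\check f)$ for the contragredient. This yields the balancing condition $F(\phi \cdot f \otimes \lambda) = F(\phi \otimes \pi^\vee(f)\lambda)$, so $F$ descends to a $G'$-equivariant map $\bar F : \s \otimes^{\textnormal{alg}}_{\mathcal{H}(G)} V_\pi^\vee \to V_{\Theta(\pi)}$.

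Surjectivity of $\bar F$ is immediate from the surjectivity of $q$ and the separating property of the smooth dual: given $v \in V_{\Theta(\pi)}$, pick a nonzero $w \in V_\pi$, choose $\phi \in \s$ with $q(\phi) = v \otimes w$, and select $\lambda \in V_\pi^\vee$ with $\lambda(w) = 1$, so that $\bar F(\phi \otimes \lambda) = v$.

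Injectivity is the main obstacle. Applying the right-exact functor $(-) \otimes^{\textnormal{alg}}_{\mathcal{H}(G)} V_\pi^\vee$ to the short exact sequence $0 \to J_\pi \to \s \to V_{\Theta(\pi)} \otimes V_\pi \to 0$ produces
$$J_\pi \otimes^{\textnormal{alg}}_{\mathcal{H}(G)} V_\pi^\vee \longrightarrow \s \otimes^{\textnormal{alg}}_{\mathcal{H}(G)} V_\pi^\vee \longrightarrow V_{\Theta(\pi)} \otimes \bigl( V_\pi \otimes^{\textnormal{alg}}_{\mathcal{H}(G)} V_\pi^\vee \bigr) \to 0.$$
A Schur-type computation identifies $V_\pi \otimes^{\textnormal{alg}}_{\mathcal{H}(G)} V_\pi^\vee \cong \C$ via the evaluation pairing, using that $V_\pi$ is a simple $\mathcal{H}(G)$-module with $\End_{\mathcal{H}(G)}(V_\pi) = \C$ thanks to admissibility and irreducibility of $\pi$. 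It thus suffices to prove the vanishing $J_\pi \otimes^{\textnormal{alg}}_{\mathcal{H}(G)} V_\pi^\vee = 0$. This is the hardest step, because the definition of $J_\pi$ only precludes $\pi$-quotients that extend from $\s$, not $\pi$-quotients of $J_\pi$ itself. I would attack it by fixing a compact open subgroup $K \leq G$ with $V_\pi^K \neq 0$, exploiting admissibility to reduce to the simple finite-dimensional $e_K\mathcal{H}(G)e_K$-module $V_\pi^K$, and using idempotent projection onto the $\pi^K$-isotypic component to show that every class in $J_\pi^K \otimes_{e_K\mathcal{H}(G)e_K} (V_\pi^\vee)^K$ must vanish, since by construction no $G$-equivariant map $\s \to V_\pi$ is nontrivial on $J_\pi$.
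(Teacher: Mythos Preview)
The paper does not supply its own proof of this proposition; it is quoted as a result of the preprint \cite{Loke-Przebinda}, so there is no in-paper argument to compare your proposal against.

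Your outline is structurally sound up to the point you yourself flag. The construction of $\bar F$, the balancing verification, surjectivity, and the identification $V_\pi \otimes^{\textnormal{alg}}_{\mathcal{H}(G)} V_\pi^\vee \cong \C$ are all correct. The gap is exactly where you say it is: you need the image of $J_\pi \otimes^{\textnormal{alg}}_{\mathcal{H}(G)} V_\pi^\vee$ in $\s \otimes^{\textnormal{alg}}_{\mathcal{H}(G)} V_\pi^\vee$ to vanish, and your sketch does not establish this. The closing clause ``since by construction no $G$-equivariant map $\s \to V_\pi$ is nontrivial on $J_\pi$'' is simply the definition of $J_\pi$; it controls maps out of $\s$, not maps out of $J_\pi$, and so is precisely the circularity you warned yourself about two sentences earlier. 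Passing to $K$-invariants does not rescue the argument: the Hecke algebra $e_K\mathcal{H}(G)e_K$ is not semisimple in general, so there is no idempotent in it that cuts out the $V_\pi^K$-isotypic part of an arbitrary module, and the short exact sequence $0 \to J_\pi^K \to \s^K \to V_{\Theta(\pi)} \otimes V_\pi^K \to 0$ need not split. To close the gap you would need a further input---for example, flatness of $V_\pi^\vee$ over $\mathcal{H}(G)$ on an appropriate subcategory, or a direct argument that $J_\pi$ admits no $\pi$-quotient---neither of which you provide.
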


Recall that the algebraic balanced tensor product $\s  \otimes^{\textnormal{alg}}_{\mathcal{H}(G)} V_\pi^\vee$ is the quotient
$$\left (\s  \otimes^{\textnormal{alg}}  V_\pi^\vee \right ) / \mathcal{I}_{\mathcal{H}(G)}$$
where $\mathcal{I}_{\mathcal{H}(G)}$ is the subspace
$$ \mathcal{I}_{\mathcal{H}(G)}=\mathrm{span}\left\{ \phi h \otimes v - \phi \otimes hv \ : \ \phi \in \s, v \in V_\pi^\vee, h \in \mathcal{H}(G) \right\}.
$$

The Hecke algebra $\mathcal{H}(G)$ acts on $L^2(G)$ via convolution and this action leads to an injective $*$-homomorphism 
$$\mathcal{H}(G) \hookrightarrow \mathbb{B}(L^2(G)).$$
The closure of the image of this map in operator norm coincides with the reduced group $C^*$-algebra $C^*_r(G)$. The Hecke algebra $\mathcal{H}(G)$ is a dense subalgebra of both the reduced $C^{*}$-algebra $C^*_r(G)$ and the maximal group $C^*$-algebra $C^*(G)$.

 Let $(G',G)$ be a Type I dual pair over a nonarchimedean local field $F$ with $G$ the smaller member. Let $\pi$ be a tempered irreducible representation of $G$. Our Theorem \ref{main-result-one} gives us that 
$$\theta(\pi) \simeq \T^r \otimes_{C^*_r(G)} V_{\pi*}$$
with the right hand side being the Hilbert space completion of the quotient
\begin{equation} \label{pre-completion} \left (\T^r  \otimes^{\mathrm{alg}}  V_{\pi*} \right ) / \mathcal{I}_{C^*_r(G)}
\end{equation}
where $\mathcal{I}_{C^*_r(G)}$ is the subspace
$$ \mathcal{I}_{C^*_r(G)}=\mathrm{span}\left\{ \phi b \otimes v - \phi \otimes bv \ : \ \phi \in \T^r, v \in V_{\pi*}, b \in C^*_r(G) \right\}.
$$
Recall that $ \mathcal{I}_{C^*_r(G)}$ equals the radical of the form $\langle {\cdot}, {\cdot} \rangle$ on $\T^r \otimes^{\mathrm{alg}}  V_{\pi^*}$ defined in (\ref{localisation-inner-product}), which in our setup equals Li's form (\ref{Li-form}) as shown in the proof of Proposition \ref{tempered-hilbert-module}, and the completion is done with respect to this form.

As the Hecke algebra $\mathcal{H}(G)$ is a subalgebra of $C^*_r(G)$, $\s$ is a subspace of $\T^{r}$ and $V_{\pi}^{\infty}$ is a subspace of $V_{\pi}$, it follows that $\mathcal{I}_{\mathcal{H}(G)}$ lies inside $ \mathcal{I}_{C^*_r(G)}$. Together with the fact that $\s\subset \T^{r}$ and $V_{\pi}^{\infty}\subset V_{\pi}$ are dense subspaces, it immediately follows from Corollary \ref{bigtheta-apply} that $\T^r \otimes_{C^*_r(G)} V_{\pi*}$ equals the completion of
$$ \left ( \s  \otimes^{\textnormal{alg}}_{\mathcal{H}(G)} V_{\pi^*}^\infty \right  ) / \left ( \mathcal{N}_{\pi^*} /  \mathcal{I}_{\mathcal{H}(G)} \right),$$
in the norm induced by the inner product $\langle {\cdot}, {\cdot} \rangle$,
where $\mathcal{N}_{\pi^*}$ is the radical of this form restricted to $\s  \otimes^{\textnormal{alg}}V_{\pi^*}^\infty$.
At this point, the above result of Loke and Przebinda and our Theorem \ref{main-result-one} lead to the following conclusion.

\begin{corollary} \label{bigtheta-tempered} Let $(G',G)$ be a Type I dual pair over a nonarchimedean local field with $G$ the smaller group. Let $\pi$ be a tempered irreducible representation of $G$. Then $\Theta(\pi)$ is irreducible
if and only if the quotient 
$$\mathcal{N}_{\pi^*} \big /  \mathcal{I}_{\mathcal{H}(G)}$$
where $\mathcal{N}_{\pi^*}$ is the radical of Li's form (\ref{Li-form}) on $\s  \otimes^{\textnormal{alg}} V_{\pi^*}^\infty$, is trivial.
\end{corollary}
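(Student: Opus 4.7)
My plan is to combine the two explicit models for the theta lift that the preceding text has set up. On one hand, Proposition \ref{Loke-Przebinda} realises the big theta lift as
\[\Theta(\pi)\;\cong\;\s\otimes^{\mathrm{alg}}_{\mathcal{H}(G)}V_{\pi^*}^{\infty},\]
after identifying the smooth dual $V_{\pi}^{\vee}$ with $V_{\pi^*}^{\infty}$. On the other hand, the discussion directly above the corollary, combined with Theorem \ref{main-result-one} and Corollary \ref{bigtheta-apply}, identifies the small theta lift $\theta(\pi)\cong \T^{r}\otimes_{C^*_r(G)}V_{\pi^*}$ with the Hilbert space completion in Li's form of the pre-Hilbert quotient
\[\Theta(\pi)\bigl/\bigl(\mathcal{N}_{\pi^*}/\mathcal{I}_{\mathcal{H}(G)}\bigr).\]
Composing the algebraic quotient with the canonical (injective) embedding of a pre-Hilbert space into its completion produces a $G'$-equivariant linear map $\Phi:\Theta(\pi)\to\theta(\pi)$ whose kernel, viewed inside $\Theta(\pi)$, is exactly $\mathcal{N}_{\pi^*}/\mathcal{I}_{\mathcal{H}(G)}$.

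Next I would pin down the image of $\Phi$. Since $\s$ and $V_{\pi^*}^{\infty}$ both carry smooth $G'$-actions, so does $\Theta(\pi)$, and hence $\Phi$ factors through the subspace $\theta(\pi)^{\infty}\subset\theta(\pi)$ of smooth vectors. For $\pi$ tempered the smooth vectors of the unitary $\theta(\pi)$ form an irreducible admissible smooth $G'$-representation which, by Howe duality, is precisely the smooth small theta lift, i.e.\ the unique irreducible smooth quotient of $\Theta(\pi)$. Because $\Phi$ has dense image in $\theta(\pi)$ it is nonzero, and so its image in the irreducible smooth representation $\theta(\pi)^{\infty}$ must be all of $\theta(\pi)^{\infty}$. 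This gives a $G'$-equivariant surjection $\Theta(\pi)\twoheadrightarrow\theta(\pi)^{\infty}$ with kernel $\mathcal{N}_{\pi^*}/\mathcal{I}_{\mathcal{H}(G)}$; equivalently, $\mathcal{N}_{\pi^*}/\mathcal{I}_{\mathcal{H}(G)}$ is the unique maximal proper $G'$-subrepresentation of $\Theta(\pi)$.

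The corollary then follows, since $\Theta(\pi)$ is irreducible if and only if its maximal proper $G'$-subrepresentation is zero, which by the identification above is exactly the condition that $\mathcal{N}_{\pi^*}/\mathcal{I}_{\mathcal{H}(G)}=0$. The step requiring the most care is the identification of the smooth vectors of the Rieffel-induced unitary $\theta(\pi)$ with the smooth small theta lift of $\pi$; this uses temperedness of $\pi$ to ensure Li's form is well defined, the admissibility and irreducibility of $\theta(\pi)$, and the standard fact that for an irreducible admissible unitary representation the smooth vectors recover the underlying irreducible smooth representation.
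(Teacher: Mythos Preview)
Your argument is correct and follows the same route the paper takes: the paper sets up exactly the identification of $\theta(\pi)$ with the Hilbert completion of $\Theta(\pi)/(\mathcal{N}_{\pi^*}/\mathcal{I}_{\mathcal{H}(G)})$ via Corollary~\ref{bigtheta-apply} and Proposition~\ref{Loke-Przebinda}, and then simply states the corollary as an immediate consequence. You have written out the details the paper leaves implicit, namely that the resulting $G'$-map $\Theta(\pi)\to\theta(\pi)$ lands in and surjects onto the smooth vectors $\theta(\pi)^\infty$, so that its kernel $\mathcal{N}_{\pi^*}/\mathcal{I}_{\mathcal{H}(G)}$ is the unique maximal proper subrepresentation of $\Theta(\pi)$.
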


In the same manner, we obtain the below as a consequence of our Theorem \ref{main-result-two}.
\begin{corollary} \label{bigtheta-stable} Let $(G',G)$ be a stable range Type I dual pair over a nonarchimedean local field with $G$ the smaller group. Assume that $(G',G)$ is not the pair $(O_{2n,2n},Sp_{2n})$. Let $\pi$ be a unitary irreducible representation of $G$. Then $\Theta(\pi)$ is irreducible
if and only if the quotient 
$$\mathcal{N}_{\pi^{*}} \big /  \mathcal{I}_{\mathcal{H}(G)}$$
where $\mathcal{N}_{\pi^{*}}$ is the radical of Li's form (\ref{Li-form}) on $\s  \otimes^{\mathrm{alg}} V_{\pi^*}^\infty$, is trivial.
\end{corollary}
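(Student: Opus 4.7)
The plan is to mirror the argument just given for the tempered case (Corollary \ref{bigtheta-tempered}) with the stable range ingredients substituted in. In place of Theorem \ref{main-result-one} we use Theorem \ref{main-result-two}, and the full $C^*$-algebra $C^*(G)$ takes the role of $C^*_r(G)$; the other input, Proposition \ref{Loke-Przebinda}, applies verbatim since it concerns any irreducible smooth representation of $G$ entering the theta correspondence, and unitary irreducible representations certainly qualify.

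First I would apply Theorem \ref{main-result-two} to the unitary irreducible representation $\pi^*$ to obtain a $G'$-equivariant isomorphism
$$\theta(\pi) \simeq \T \otimes_{C^*(G)} V_{\pi^*}.$$
The inner product \eqref{localisation-inner-product} defining the right-hand side restricts on $\s \otimes^{\mathrm{alg}} V_{\pi^*}^\infty$ to Li's form \eqref{Li-form}, by the same computation used in the proof of Proposition \ref{construct-1}. Next, since $\mathcal{H}(G)$ is a dense $*$-subalgebra of $C^*(G)$ and both $\s \subset \T$ and $V_{\pi^*}^\infty \subset V_{\pi^*}$ are dense subspaces preserved by the relevant Hecke actions, Corollary \ref{bigtheta-apply} applies and identifies $\T \otimes_{C^*(G)} V_{\pi^*}$ with the Hilbert space completion of
$$\left( \s \otimes^{\mathrm{alg}}_{\mathcal{H}(G)} V_{\pi^*}^\infty \right) \big/ \left( \mathcal{N}_{\pi^*} / \mathcal{I}_{\mathcal{H}(G)} \right).$$
Finally, Proposition \ref{Loke-Przebinda} identifies $\Theta(\pi) \simeq \s \otimes^{\mathrm{alg}}_{\mathcal{H}(G)} V_\pi^\vee$, and for unitary irreducible $\pi$ the smooth dual $V_\pi^\vee$ is canonically identified with $V_{\pi^*}^\infty$.

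Combining these three identifications, we obtain a canonical $G'$-equivariant map $\Theta(\pi) \to \theta(\pi)$ with dense image whose algebraic kernel is exactly $\mathcal{N}_{\pi^*} / \mathcal{I}_{\mathcal{H}(G)}$. Since $\theta(\pi)$ is by definition the unique irreducible quotient of $\Theta(\pi)$, irreducibility of $\Theta(\pi)$ is equivalent to this quotient being trivial, which is the claim. The only real bookkeeping point requiring care is to verify that the canonical pairing used implicitly in Loke--Przebinda's identification of $\Theta(\pi)$ with $\s \otimes^{\mathrm{alg}}_{\mathcal{H}(G)} V_\pi^\vee$ matches the one used in Li's form; once that matching is confirmed the argument is a formal parallel of Corollary \ref{bigtheta-tempered}, and I would expect to be able to simply write "the proof is identical, \emph{mutatis mutandis}, to that of Corollary \ref{bigtheta-tempered}" after setting up the substitutions.
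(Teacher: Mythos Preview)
Your proposal is correct and matches the paper's approach exactly: the paper's own ``proof'' is the single sentence ``In the same manner, we obtain the below as a consequence of our Theorem \ref{main-result-two}'', i.e., precisely the substitution of stable-range ingredients (Theorem \ref{main-result-two}, $C^*(G)$, Proposition \ref{construct-1}) into the argument preceding Corollary \ref{bigtheta-tempered}. Your write-up in fact supplies more detail than the paper does; the caveat you flag about matching Loke--Przebinda's pairing with Li's form is implicit in the paper's treatment of the tempered case as well.
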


We should mention that there are some results in the literature concerning the irreducibility of big theta. In the non-archimedean case, irreducibility of big theta is known in the (almost) equal rank case by \cite[Proposition 5.4]{Gan-Ichino-14} and for those tempered representations of the big group satisfying certain hypothesis on their $L$-parameters by  \cite[Proposition 5.4]{Atobe-Gan-17}. In the stable range setting, irreducibility of big theta is proven in \cite[Theorem A]{Loke-Ma} in the archimedean case and in \cite[Corollary 7.3]{Chen-Zou} in the non-archimedean case. 


\section{The image of $\Omega$} 
\label{local-Rallis}

In this section, we will prove that for two of the three families of Type I dual pairs, the images of the Hilbert $C^{*}$-module representations 
$$\Omega^r :C^{*}(G')\to \End_{C^*_r(G)} ^*(\T^r)$$
and 
$$\Omega:C^{*}(G')\to \End_{C^*(G)} ^*(\T)$$
of $C^*(G')$ defined earlier in Prop. \ref{prop-left-action} and Prop. \ref{left-action-stable} contain the sub-$C^*$-algebra of compact operators (see Sect. \ref{Hilbert_modules}). Our main tool will be the so-called local Siegel-Weil formula of Gan and Ichino \cite{Gan-Ichino-14}.

The classification of dual pairs by Howe tells us that Type I pairs fall into three groups: ortho-symplectic pairs, unitary pairs and quaternionic pairs. In order to be able to utilize the result of Gan and Ichino \cite{Gan-Ichino-14} in this Section, we will assume their hypotheses that
\begin{itemize}
\item $F$ is non-archimedean, and 
\item $(G',G)$ is either an ortho-symplectic pair or a unitary pair.
\end{itemize} 

This means that we take the division algebra $D$ of Section (\ref{dual-pairs}) to be either $F$ itself or a quadratic extension of $F$. In the former case, one of the groups of the dual pair is an orthogonal group and the other one is either a symplectic group or a metaplectic group depending on the parity of the dimension of the orthogonal space. Such a pair is often called an ortho-symplectic pair. In the latter case, both groups are unitary and hence the pair is called a unitary pair. 

We put 
$$\ell = \begin{cases} m' - m, \qquad \ \ \textrm{when $G',G$ are unitary}, \\
 m'-m-1, \quad \textrm{when $G'$ is orthogonal,} \\
 m'-m+1, \quad \textrm{when $G$ is orthogonal.} 
  \\ 
 \end{cases}$$ 
Here $m'$ and $m$ denote the dimension of $V'$ and $V$ over $D$ respectively. We assume that $\ell$ is always positive, which corresponds to assuming that $G$ is the smaller group (see Section \ref{dual groups}).

We let $\chi_{_V}, \chi_{_{V'}}$ denote the auxilllary splitting characters of $D^\times$ that we fix for the oscillator representation $\omega=\omega_{V',V}$ of $G' \times G$. These are described precisely in \cite[3.2]{Gan-Ichino-14}. 

\subsection{Doubling and the degenerate principal series} 

For the main result of this section, we will need to the basics of the doubling method and the degenerate principal series which we quickly summarize. For complete detail, we refer the reader to \cite{Gan-Ichino-14} which we follow.

Consider the doubled space
$$\V'=V'\oplus (-V')$$ 
where $-V'$ denotes the space $V'$ with the form $-({\cdot},{\cdot})'$. Let $\G'=G'(\V')$ denote the isometry group of $\V'$. Identifying $G'(-V')=G'(V')=G'$, we embed 
$$i: G' \times G' \hookrightarrow \G'.$$ 
We have a complete polarization
$$\V' = V'_d \oplus V'_{ad}$$
with $V'_d=\{ (v',v') : v' \in V'\}$ and $V'_{ad}=\{ (v',-v') : v' \in V'\}$.

The stabilizer $\mathbb{P}$ of  $V'_d$ in $\G'$ is a Siegel parabolic subgroup with Levi subgroup ${\rm GL}(V'_{d})$. Given $s \in \C$ and a character $\psi$ of $D^\times$, we let 
$$I_{\mathbb{P}}^{\G'}(s,\psi) := {\rm Ind}_{\mathbb{P}}^{\G'}(\psi |{\cdot}|^s_D)$$
denote the normalized induced representation of $\G'$. Here we regard $\psi |{\cdot}|^s_D$ as a character of $\mathbb{P}$ via the composition
$$\mathbb{P} \longrightarrow {\rm GL}(V'_{d}) \xrightarrow{{\rm det}} D^\times.$$

As $(G'(\V'),G)$ is a dual pair, we have an oscillator representation 
$\bm{\omega}$ of $G'(\V') \times G$, realized on $\mathcal{S}(V'_{ad} \otimes V)$, which satisfies
\begin{equation}\label{double-identity} \omega \otimes \left ( \overline{\omega} \otimes \chi_V \right ) \simeq \bm{\omega}
\end{equation}
as $G'{\times}G'$-representations. For $\phi \in \mathcal{S}(V'_{ad} \otimes V)$ and $g \in \G'$, put 
$$\mathcal{F}_\phi(g):= \bm{\omega}(g)(\phi)(0).$$
Then the rule $\phi \mapsto \mathcal{F}_\phi$ gives a $\G'$-equivariant map 
$$\mathcal{F} : \bm{\omega} \longrightarrow I_{\mathbb{P}}^{\G'}(-\tfrac{\ell}{2},\chi_{_{V}})$$
whose image we denote by 
$$R(V).$$
A key property of the map $\mathcal{F}$ is that for $\phi_1 \in \omega$ and $\overline{\phi}_2 \in \overline{\omega}$ and $g \in G'$, we have 
$$\mathcal{F}_{\phi_1 \otimes \overline{\phi}_2}(i(g,1))= \langle \omega(g)(\phi_1), \phi_2 \rangle.$$
Here we used the isomorphism (\ref{double-identity}) but suppressed it notationally for convenience.

\subsection{The intertwiner} Let $\mathbb{H}$ be the quadratic space over $F$ of dimension $2$ with quadratic form given by the symmetric matrix 
$\left ( \begin{smallmatrix} 0 & 1 \\ 1 & 0 \end{smallmatrix} \right )$. The space $\mathbb{H}$ is called the hyperbolic plane. We introduce the following enlargement of the space $V$: 
$$V^\dagger := V \oplus \mathbb{H}^{\ell}.$$

Let $\omega_{\dagger}=\omega_{V',V^\dagger}$ denote the oscillator representation of $G' \times G(V^\dagger)$ for which one can use the same auxillary characters $\chi_{_{V'}}, \chi_{_{V}}$. We set $\bm{\omega}_\dagger$ denote the oscillator representation of $\G' \times G(V^\dagger)$ and proceeding as before, we obtain a $\G'$-equivariant map 
$$\mathcal{F} : \bm{\omega}_\dagger \longrightarrow I_{\mathbb{P}}^{\G'}(\tfrac{\ell}{2},\chi_{_{V}})$$
whose image we denote by $R(V^\dagger)$. Notice that the dimension of $V^\dagger$ is greater than that of $V'$. This leads to the following observation which we record for later use.

\begin{lemma} 
\label{L1-dagger} 
The oscillator representation $\omega_\dagger$, viewed as a $G'$-representation, is integrable. 
\end{lemma}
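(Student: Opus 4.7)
The natural approach is to apply Lemma \ref{L1-exception} to the dual pair $(G(V^\dagger), G')$, in which $G'$ is now the smaller member: this smallness follows from the observation noted immediately above the lemma that $\dim V^\dagger > \dim V'$, so $G(V^\dagger)$ sits as the ``big'' member and $G'$ as the ``small'' one. Since $V^\dagger$ is of the same structural type as $V$ (quadratic or hermitian, as appropriate), the pair $(G(V^\dagger), G')$ falls within the ortho-symplectic/unitary Type I framework to which Lemma \ref{L1-exception} applies.

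To invoke the lemma, the plan is to verify the stable range hypothesis for $(G(V^\dagger), G')$, namely that $V^\dagger$ admits an isotropic subspace of dimension at least $\dim V'$. The decomposition $V^\dagger = V \oplus \mathbb{H}^\ell$ automatically contributes an $\ell$-dimensional isotropic summand coming from $\mathbb{H}^\ell$, and the specific values of $\ell$ prescribed in the three subcases ($\ell = m'-m$ in the unitary case, $\ell = m'-m-1$ when $G'$ is orthogonal, $\ell = m'-m+1$ when $G$ is orthogonal) are designed precisely so that, combined with isotropic structure already present in $V$, the total isotropy of $V^\dagger$ exceeds $\dim V'$. After this case-by-case verification, one also needs to exclude the exceptional pair $(O_{2n,2n}, \mathrm{Sp}_{2n})$ of Lemma \ref{L1-exception}; the presence of the non-hyperbolic summand $V$ inside $V^\dagger$ makes this exclusion straightforward.

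The main obstacle is the explicit dimension arithmetic in the stable range check, which must be handled individually in each subcase, together with an isolation of the exceptional configuration. Should the direct route via Lemma \ref{L1-exception} encounter difficulties at the boundary, a fallback is available through the doubling realization of matrix coefficients: the analog of the identity $\mathcal{F}_{\phi_1 \otimes \overline{\phi}_2}(i(g,1)) = \langle \omega(g)\phi_1, \phi_2\rangle$ stated in the paper, applied to $\omega_\dagger$ and $\bm{\omega}_\dagger$, realizes $g \mapsto \langle \omega_\dagger(g)\phi_1,\phi_2\rangle$ on $G'$ as the pullback along $g \mapsto i(g,1)$ of a section of $I_{\mathbb{P}}^{\G'}(\ell/2, \chi_V)$, and $L^1$-integrability then reduces to the convergence of the standard intertwining integral at the positive parameter $\ell/2$.
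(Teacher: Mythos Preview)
Your primary approach has a genuine gap: the dual pair $(G(V^\dagger),G')$ is \emph{not} in the stable range, so Lemma~\ref{L1-exception} does not apply. The Witt index of $V^\dagger = V \oplus \mathbb{H}^\ell$ equals $q + \ell$, where $q$ is the Witt index of $V$, and the stable range condition with $G'$ as the smaller member requires $q + \ell \geq m' = \dim V'$. Checking the three cases: in the unitary case $\ell = m'-m$ forces $q \geq m$, impossible since $q \leq m/2$; when $G'$ is orthogonal (so $V$ is symplectic with $q = m/2$) one gets $m' - m/2 - 1 \geq m'$, which never holds; when $G$ is orthogonal, $\ell = m'-m+1$ requires $q \geq m-1$, which already fails for $m \geq 3$. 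Your assertion that the values of $\ell$ are ``designed precisely'' so that isotropy in $V^\dagger$ exceeds $\dim V'$ is simply false---they are designed so that $(G(V^\dagger),G')$ sits at the \emph{boundary} of the convergent range (the principal series parameter flips from $-\ell/2$ to $+\ell/2$), not in the stable range.

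The paper does not go through Lemma~\ref{L1-exception} (which is Corollary~3.3 of \cite{Li-89}) but instead invokes the sharper Theorem~3.2 of \cite{Li-89}, which gives $L^1$-integrability of the matrix coefficients of $\omega_\dagger|_{G'}$ directly from the dimension count $\dim V^\dagger = m + 2\ell$; no stable range hypothesis is needed. Your fallback via the doubling realization and convergence of the intertwining integral at parameter $\ell/2$ is in the right spirit and could be made to work, but as written it is a sketch rather than a proof, and it is not the route the paper takes.
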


\begin{proof} Noting that the dimension of $V^\dagger$ is $m+2\ell$, it follows directly from Theorem 3.2 of \cite{Li-89} that the matrix coefficients of the $G'$-representation $\omega_\dagger$ are in $L^1(G')$. \end{proof}

Crucial to us is the fact there is a $\G'$-intertwiner 
$$\mathcal{M} : I_{\mathbb{P}}^{\G'}(\tfrac{\ell}{2},\chi_{_{V}}) \longrightarrow I_{\mathbb{P}}^{\G'}(-\tfrac{\ell}{2},\chi_{_{V}})$$ 
which restricts to a surjection
$$\mathcal{M} : R(V^\dagger) \twoheadrightarrow R(V).$$ 

\subsection{A local Siegel-Weil formula} 
Let $\phi_1,\phi_2,\phi_3,\phi_4 \in \omega$. Following \cite[Sections 17 and 18]{Gan-Ichino-14}, let us define 
\begin{align*} \mathcal{E}(\phi_1,\phi_2,\phi_3,\phi_4) &:= \int_G' \mathcal{F}^\dagger_{\phi_1 \otimes \overline{\phi}_2}(i(g,1)) \overline{\mathcal{F}_{\phi_3 \otimes \overline{\phi}_4}(i(g,1))}\mathrm{d}g \\
&= \int_G' \mathcal{F}^\dagger_{\phi_1 \otimes \overline{\phi}_2}(i(g,1)) \overline{\langle \omega(g)(\phi_3), \phi_4 \rangle}\mathrm{d}g
\end{align*}
where $\mathcal{F}^\dagger_{\phi_1 \otimes \overline{\phi}_2} \in R(V^\dagger)$ is a preimage of $ \mathcal{F}_{\phi_1 \otimes \overline{\phi}_2} \in R(V)$ under the intertwiner $\mathcal{M}$ above. Observe that by Lemma \ref{L1-dagger}, the function $\mathcal{F}^\dagger_{\phi_1 \otimes \overline{\phi}_2}(i(g,1))$ belongs to $L^1(G')$ and hence the integral converges. The definition can be shown to be independent of the choice of the preimage. 

We also define 
$$\mathcal{I}(\phi_1,\phi_2,\phi_3,\phi_4):= \int_G \langle \omega(s)(\phi_1), \phi_3 \rangle  \overline{\langle \omega(s)(\phi_2), \phi_4 \rangle} \mathrm{d}s.$$
As $\omega$ is integrable as a $G$-representation, this integral converges. 

In \cite[Theorem 17.2]{Gan-Ichino-14}, we find a proof of the local Siegel-Weil formula whose content is that $\mathcal{E}$ and $\mathcal{I}$ both belong to a certain space of intertwiners which turns out to be one dimensional, and hence they are proportional. We let $\lambda$ be the constant of proportionality (which depends on the chosen Haar measures for $G'$ and $G$) so that we have 
\begin{equation} \label{LRIPF} \mathcal{E}(\phi_1,\phi_2,\phi_3,\phi_4) = \lambda {\cdot }\mathcal{I}(\phi_1,\phi_2,\phi_3,\phi_4)
\end{equation}
for all $\phi_1,\phi_2,\phi_3,\phi_4 \in \omega$.  

\subsection{The image of $\Omega$}
We are ready to prove the main result of this section. 

\begin{theorem} 
\label{left-surject} 
Let $(G',G)$ be a type I dual pair with $G$ the smaller group. Assume that $F$ is a non-archimedean and that $(G',G)$ is either ortho-symplectic or unitary. Then the image of the homomorphism 
$$\Omega^r: C^*(G') \to \End_{C^*_r(G)} ^*(\T^r)$$ 
contains $\mathbb{K}_{C^*(G)}(\T)$. 
 
 Moreover, if $(G',G)$ is in the stable range and $(G',G)$ is not the pair $(O_{2n,2n},Sp_{2n})$, then the image of the homomorphism 
$$\Omega: C^*(G') \to \End_{C^*(G)} ^*(\T)$$ 
again contains $\mathbb{K}_{C^*(G)}(\T)$. 
\end{theorem}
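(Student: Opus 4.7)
The plan is to use the local Siegel--Weil formula \eqref{LRIPF} to realize every rank-one operator on $\T^r$ as $\lambda^{-1}\Omega^r(a)$ for an explicit $a \in C^*(G')$. Since $\Omega^r$ is a $*$-homomorphism of $C^*$-algebras, its image is automatically norm-closed, and since the rank-one operators $T_{\phi_2,\phi_1}$ with $\phi_1,\phi_2 \in \s$ norm-densely span $\mathbb{K}_{C^*_r(G)}(\T^r)$ (by density of $\s$ in $\T^r$ and continuity of $(x,y) \mapsto T_{x,y}$), this will give the desired containment.

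The key construction takes
\[
a_{\phi_1,\phi_2}(g) := \mathcal{F}^\dagger_{\phi_1 \otimes \overline{\phi}_2}(i(g,1)),
\]
a smooth function on $G'$ that lies in $L^1(G') \subset C^*(G')$ via Lemma \ref{L1-dagger} and the analysis of \cite{Gan-Ichino-14}. The next step is to apply \eqref{LRIPF} with $\phi_3$ replaced by $\omega(s)\phi_3$, for arbitrary $s \in G$ and $\phi_3,\phi_4 \in \s$. Using that $\omega(s)$ (from $G$) commutes with $\Omega^r(a_{\phi_1,\phi_2})$ (from $G'$) in the oscillator action, the left-hand side becomes $\langle \phi_4, \omega(s)\Omega^r(a_{\phi_1,\phi_2})\phi_3\rangle$. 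A short computation, unwinding the module structure \eqref{eqright-module} and the rank-one formula $T_{\phi_2,\phi_1}(\phi_3) = \phi_2 \cdot \langle \phi_1,\phi_3\rangle_G$, identifies the right-hand side with $\lambda \langle \phi_4, \omega(s) T_{\phi_2,\phi_1} \phi_3\rangle$. Reading this as equality of functions in $s$ then gives
\[
\langle \phi_4, \Omega^r(a_{\phi_1,\phi_2}) \phi_3\rangle_G = \lambda \langle \phi_4, T_{\phi_2,\phi_1}\phi_3\rangle_G
\]
as elements of $\mathcal{S}(G) \subset C^*_r(G)$, for every $\phi_3,\phi_4 \in \s$. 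Non-degeneracy of the module inner product (Proposition \ref{tempered-hilbert-module}) together with density of $\s$ in $\T^r$ then force $\Omega^r(a_{\phi_1,\phi_2}) = \lambda T_{\phi_2,\phi_1}$ in $\End^*_{C^*_r(G)}(\T^r)$. The constant $\lambda$ is nonzero because both $\mathcal{E}$ and $\mathcal{I}$ are nonzero bilinear forms spanning the one-dimensional space of such invariants identified in \cite{Gan-Ichino-14}. The stable range statement follows by the same argument with $(\T, C^*(G))$ in place of $(\T^r, C^*_r(G))$ throughout, invoking Proposition \ref{construct-1} for non-degeneracy.

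The main obstacle I anticipate is carefully organizing the upgrade from the scalar Siegel--Weil formula to the $C^*$-module identity; this hinges on the fact that $\langle \phi_4, \Omega^r(a_{\phi_1,\phi_2}) \phi_3\rangle_G$ is represented by the pointwise matrix coefficient $s \mapsto \langle \phi_4, \omega(s)\Omega^r(a_{\phi_1,\phi_2})\phi_3\rangle$, which in turn requires $\Omega^r(a_{\phi_1,\phi_2})(\phi_3) \in \s$. I expect this to follow from smoothness of $a_{\phi_1,\phi_2}$, smoothness of $\phi_3$, and the commutativity of $G$ and $G'$ on $\omega$: any compact open subgroup $K \subset G$ stabilizing $\phi_3$ also stabilizes the strong integral $\int_{G'} a_{\phi_1,\phi_2}(g)\omega(g)\phi_3 \, dg$.
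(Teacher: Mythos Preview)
Your proposal is correct and follows the same strategy as the paper: use the local Siegel--Weil identity \eqref{LRIPF} to exhibit each rank-one operator $T_{\phi_2,\phi_1}$ as $\lambda^{-1}\Omega^r$ applied to the $L^1(G')$-function $g\mapsto \mathcal{F}^\dagger_{\phi_1\otimes\overline{\phi}_2}(i(g,1))$, then invoke density of $\s$ and closedness of the image. The paper's execution is marginally more direct than yours: rather than introducing the parameter $s\in G$ and working with the $C^*_r(G)$-valued inner product, it tests the operator identity against the \emph{scalar} inner product of the oscillator Hilbert space, computing $\langle u,\Omega^r(a)z\rangle=\mathcal{E}(\phi_1,\phi_2,z,u)$ and $\langle u,\lambda T_{\phi_2,\phi_1}z\rangle=\lambda\,\mathcal{I}(\phi_1,\phi_2,z,u)$ directly for $u,z\in\s$; since for $z\in\s$ both $\Omega^r(a)z$ and $T_{\phi_2,\phi_1}z$ already live in $L^2(X)$ (as $a\in L^1(G')$ and $\langle\phi_1,z\rangle_G\in\mathcal{S}(G)$ acts on the tempered $G$-module), this completely sidesteps the smoothness obstacle you flagged.
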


\begin{proof} Let $x,y \in \mathcal{S}(X,\mathcal{F}) \subset \T$ and let $T_{x,y}$ denote the associated rank one $C^*(G)$-module operator on $\T$: 
$$T_{x,y}(z)= x{\cdot}\langle y,z \rangle\subrangle{G}$$
for all $z \in \mathcal{S}(X,\mathcal{F})$. 

Let $\mathcal{F}^\dagger_{y\otimes \overline{x}} \in R(V^\dagger)$ be any preimage under the intertwiner $\mathcal{M}$ of $\mathcal{F}_{y\otimes \overline{x}} \in R(V)$. By Lemma \ref{L1-dagger}, $\mathcal{F}^\dagger_{y\otimes \overline{x}}$ belongs to $L^1(G')$. 
We will show that $\Omega(\mathcal{F}^\dagger_{y\otimes \overline{x}})$ equals a scalar multiple of $T_{x,y}$. This is sufficient since rank one operators are dense in $\mathbb{K}_{C^*(G)}(\T)$ by its definition and since $\mathcal{S}(X,\mathcal{F})$ is dense in $\T$. 

Recall that Hilbert spaces are taken to be right linear in this paper. Let $\lambda$ be the constant of proportionality in (\ref{LRIPF}). Given $z,u \in \mathcal{S}(X,\mathcal{F})$, we have
\begin{align*} \langle u, \lambda {\cdot }T_{x,y}(z) \rangle &= \lambda \langle u, T_{x,y}(z) \rangle \\ 
&= \lambda {\cdot }\langle u, x{\cdot}\langle y,z \rangle\subrangle{G} \rangle \\
&= \lambda {\cdot }\langle u, \int_G \langle y,z \rangle\subrangle{G}(s^{-1}) \omega(s)(x) ds \rangle \\
&=\lambda {\cdot } \int_G \langle y,z \rangle\subrangle{G}(s^{-1}) \langle u, \omega(s)(x) \rangle ds \\
&= \lambda {\cdot }\int_G \langle \omega(s)(y),z \rangle \overline{\langle \omega(s)(x), u \rangle} ds \\
&= \lambda {\cdot } \mathcal{I}(y,x,z,u)
\end{align*}
On the other hand, we have
\begin{align*} \langle u, \Omega(\mathcal{F}^\dagger_{y\otimes \overline{x}})(z) \rangle &= \langle u, \int_{G'} \mathcal{F}^\dagger_{y\otimes \overline{x}}(i(g,1)) \omega(g)(z) dg \rangle \\
&= \int_{G'} \mathcal{F}^\dagger_{y\otimes \overline{x}}(i(g,1)) \langle u, \omega(g)(z)\rangle dg \\
&= \int_{G'} \mathcal{F}^\dagger_{y\otimes \overline{x}}(i(g,1)) \overline{\langle \omega(g)(z), u \rangle} dg \\
&= \mathcal{E}(y,x,z,u)
\end{align*}
We conclude from identity (\ref{LRIPF}) that $\langle u, \lambda {\cdot } T_{x,y}(z) \rangle$ equals $\langle u, \Omega(\mathcal{F}^\dagger_{y\otimes \overline{x}})(z) \rangle$, and hence that $\lambda {\cdot } T_{x,y}$ equals $\Omega(\mathcal{F}^\dagger_{y\otimes \overline{x}})$  as desired.
\end{proof}


\begin{corollary} 
With the above hypotheses, the induction functors ${\rm Ind}_{C_r^*(G)}^{C^*(G')}(\T^r)$  and ${\rm Ind}_{C^*(G)}^{C^*(G')}(\T)$ take irreducible representations to irreducible ones.  
\end{corollary}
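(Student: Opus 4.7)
My plan is to reduce the claim to a standard fact about Rieffel induction via compact operators: for any right Hilbert $B$-module $X$ and any irreducible $*$-representation $\pi: B \to \mathbb{B}(V_\pi)$, the associated representation of $\mathbb{K}_B(X)$ on $X \otimes_B V_\pi$ is either zero or irreducible. Granted this, the corollary follows at once. Writing $(B,X)$ for $(C^*_r(G), \T^r)$ or $(C^*(G), \T)$ as appropriate, Theorem \ref{left-surject} tells us that the image of $\Omega^{r}$ (respectively $\Omega$) contains $\mathbb{K}_B(X)$. Consequently any closed $C^*(G')$-invariant subspace of $X \otimes_B V_\pi$ is in particular $\mathbb{K}_B(X)$-invariant, and so is either zero or all of $X \otimes_B V_\pi$.

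For the standard lemma, the rank-one operators $T_{y,z} \in \mathbb{K}_B(X)$ act on simple tensors by $T_{y,z}(x \otimes v) = y \otimes \pi(\langle z, x \rangle) v$, combining $T_{y,z}(x) = y \langle z, x \rangle$ with the balancing identity $w b \otimes v = w \otimes \pi(b) v$ inherent to the internal tensor product. Together with density of $\pi(B) u$ in $V_\pi$ for every non-zero $u$ (which follows from irreducibility of $\pi$ via the Jacobson density theorem), this shows that the $\mathbb{K}_B(X)$-orbit of any non-zero simple tensor is dense in $X \otimes_B V_\pi$. An alternative route is to compute the commutant of $\mathbb{K}_B(X) \otimes 1$ inside $\mathbb{B}(X \otimes_B V_\pi)$, identify it via a Morita-type argument with $\pi(B)' = \mathbb{C}$, and conclude using the double commutant theorem.

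The only subtlety concerns those $\pi$ for which $X \otimes_B V_\pi = 0$. This does not arise in the stable range setting, since by Theorem \ref{Li-unitary} every unitary irreducible representation of $G$ enters the theta correspondence; in the tempered setting it occurs exactly for those $\pi$ that do not enter the theta correspondence (Theorem \ref{tempered-reps}), and the statement is interpreted either by admitting the zero representation as a degenerate case of irreducibility or by restricting attention to $\pi$ with non-zero theta lift. I expect this bookkeeping to be the only non-routine aspect; the reduction to compact-operator irreducibility itself is standard.
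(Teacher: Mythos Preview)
Your proposal is correct and follows essentially the same route as the paper: both arguments reduce immediately to Theorem \ref{left-surject} and then invoke the standard fact that Rieffel induction through $\mathbb{K}_B(X)$ preserves irreducibility. The only cosmetic difference is that the paper phrases the endgame via the type~I property of $G$ (so that $\pi(B)\supseteq \mathbb{K}(\mathcal{H})$, whence the image of $\mathbb{K}_B(X)$ in $\mathbb{B}(\mathcal{H}')$ contains $\mathbb{K}(\mathcal{H}')$), whereas you appeal directly to the Rieffel correspondence; your commutant route is the cleaner of your two sketches, since the simple-tensor orbit argument as written only treats simple tensors and would need an extra line to handle general non-zero vectors.
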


\begin{proof}
We take a tempered irreducible representation $(\mathcal{H},\pi)$ of $G$ and write $(\mathcal{H}',\pi'):={\rm Ind}_{C^*_r(G)}^{C^*(G')}(\T,\pi)$. Since $G$ is type $I$, $\pi(C^*_r(G))\supseteq \mathbb{K}(\mathcal{H})$ contains all compact operators. Therefore, $\mathbb{K}(\mathcal{H}')\subseteq \mathrm{im}(\mathbb{K}_{C^*_r(G)}(\T)\to \mathbb{B}(\mathcal{H}'))$ and as an immediate consequence of Theorem \ref{left-surject} we conclude that $\pi'(C^*_r(G'))\supseteq \mathbb{K}(\mathcal{H}')$. Since $\pi'(C^*_r(G'))\supseteq \mathbb{K}(\mathcal{H}')$, $\pi'$ is irreducible. The argument for unitary representations in the stable range case is the same.
\end{proof}

Note that we already knew this since these functors agree with the local theta correspondence on the irreducible unitary representations as proven in Theorems \ref{main-result-one} and \ref{main-result-two}.

Building on Theorem \ref{left-surject}, when $(G',G)$ is a type I dual pair over a non-archimedean field that is either ortho-symplectic or unitary, we form the subquotient
$$C^*_\theta(G'):=(\Omega^r)^{-1}(\mathbb{K}_{C^*_r(G)}(\T^r))/\ker\Omega^r,$$
which is an ideal in $C^*(G')/\ker\Omega^r$. As such, the spectrum $\widehat{C^*_\theta(G')}$ is a locally closed set in $\widehat{G}'$ as it forms an open set in the closed set $\widehat{G}'\setminus \widehat{\ker\Omega^r}$. We also write $C^*_\theta(G)$ for the closed ideal in $C^*_r(G)$ spanned by $\langle \T^r,\T^r\rangle\subrangle{G}$. The spectrum $\widehat{C^*_\theta(G)}$ is an open set in $\hat{G}_t$. By standard arguments, we deduce the following corollary to Theorem \ref{left-surject}.

\begin{corollary}
\label{thetamorita}
Let $(G',G)$ be a type I dual pair with $G$ the smaller group. Assume that $F$ is a non-archimedean and that $(G',G)$ is either ortho-symplectic or unitary. Then the $(C^*(G'),C^*_r(G))$-correspondence $\T^r$ restricts to a Morita equivalence of $C^*_\theta(G')$ and $C^*_\theta(G)$ that implements a homeomorphism from the unitary dual of $C^*_\theta(G)$ to the unitary dual of $C^*_\theta(G')$ via Rieffel induction. In particular, the domain of the theta correspondence in $\hat{G}_t$ is the open set $\widehat{C^*_\theta(G)}$ and its range is the locally closed set $\widehat{C^*_\theta(G')}\subseteq \hat{G}'$
\end{corollary}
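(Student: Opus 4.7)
The plan is to realize the restricted $\T^r$ as a Morita equivalence bimodule in the sense of Rieffel, from which both the homeomorphism of spectra and the topological placements of the domain and range follow by standard $C^*$-algebra duality. I would verify in turn: fullness of $\T^r$ as a right Hilbert $C^*_\theta(G)$-module, identification of the left action with the compact operators, and the passage to the unitary duals.

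First, $\T^r$ is a full right Hilbert $C^*_\theta(G)$-module by construction, since $C^*_\theta(G)$ was defined as the closed ideal of $C^*_r(G)$ generated by $\langle \T^r,\T^r\rangle\subrangle{G}$. A standard approximate-unit argument, using that $\langle x,x\rangle\subrangle{G}\in C^*_\theta(G)$ for every $x\in\T^r$, shows $\T^r \cdot C^*_\theta(G) = \T^r$, and the algebras of compact operators agree, $\mathbb{K}_{C^*_\theta(G)}(\T^r) = \mathbb{K}_{C^*_r(G)}(\T^r)$, since the rank-one operators $T_{x,y}$ depend only on $x$, $y$ and the inner product. By its very definition, $\Omega^r$ descends to an injective $*$-homomorphism
$$\bar{\Omega}^r : C^*_\theta(G') \hookrightarrow \mathbb{K}_{C^*_r(G)}(\T^r),$$
whose surjectivity is precisely Theorem \ref{left-surject}. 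Hence $\bar\Omega^r$ is a $*$-isomorphism, exhibiting $\T^r$ as a $(C^*_\theta(G'),C^*_\theta(G))$-equivalence bimodule; Rieffel's theorem then delivers the homeomorphism $\widehat{C^*_\theta(G)} \xrightarrow{\sim} \widehat{C^*_\theta(G')}$ implemented by internal tensor product with $\T^r$.

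For the topological placements of the two spectra, I would invoke the standard duality between closed ideals and quotients: $\widehat{C^*_\theta(G)}$ is open in $\widehat{C^*_r(G)} = \hat G_t$ because $C^*_\theta(G)$ is a closed ideal in $C^*_r(G)$; likewise $\widehat{C^*_\theta(G')}$ is open in $\widehat{C^*(G')/\ker \Omega^r}$, which in turn is the closed subset $\hat G' \setminus \widehat{\ker \Omega^r}$ of $\hat G'$, yielding the locally closed placement. The identification of $\widehat{C^*_\theta(G)}$ with the domain of the theta correspondence is read off from Theorem \ref{main-result-one}: for irreducible tempered $\pi$, the induced representation ${\rm Ind}_{G}^{G'}(\T^r,\pi) \simeq \theta(\pi^*)$ is nonzero iff $\pi$ does not annihilate $C^*_\theta(G)$, i.e.\ iff $\pi \in \widehat{C^*_\theta(G)}$; its image under Rieffel induction is then $\widehat{C^*_\theta(G')}$.

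The real content lies not in this corollary --- which is a formal consequence of Theorem \ref{left-surject} together with the general theory of imprimitivity bimodules --- but in Theorem \ref{left-surject} itself, whose proof is powered by the local Siegel--Weil formula. The only genuinely nontrivial point in the present argument is that $\T^r$ is non-degenerate as a $C^*_\theta(G)$-module, but this is a routine approximate-unit calculation in the ambient Hilbert module $\T^r$ over $C^*_r(G)$.
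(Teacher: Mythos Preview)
Your proposal is correct and is precisely the ``standard arguments'' the paper invokes without spelling out: the paper offers no proof beyond the sentence ``By standard arguments, we deduce the following corollary to Theorem \ref{left-surject},'' and your argument unpacks exactly that --- fullness over $C^*_\theta(G)$ by definition, the isomorphism $\bar\Omega^r:C^*_\theta(G')\xrightarrow{\sim}\mathbb{K}_{C^*_r(G)}(\T^r)$ from the definition of $C^*_\theta(G')$ together with Theorem \ref{left-surject}, and the ideal/quotient spectral dictionary. Your observation that the substance lies entirely in Theorem \ref{left-surject} (via the local Siegel--Weil formula) is also on the mark.
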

An analogue of this corollary also exists for the stable range case of course, we just need to replace $C^*_r(G)$ with $C^*(G)$.  In fact, recall that in the stable rang case $C^*_\theta(G)=C^*(G)$ as all of the unitary dual of the small group enters the theta correspondence.

\begin{remark} In \cite{Kakuhama-22}, Kakuhama pursued analogues of the results of Gan and Ichino in \cite{Gan-Ichino-14} (some of which we used above) in the remaining quaternionic case ($F$ still taken to be non-archimedean). Unfortunately, Kakuhama only considers the case $\ell=1$ when it comes to the construction of the intertwiners $\mathcal{E},\mathcal{I}$. Concerning the archimedean case, it seems plausible that the ingredients coming from \cite{Gan-Ichino-14} have their archimedean analogues but we have not found such results in the literature.
\end{remark}

\section{The global oscillator bimodule}
\label{globalmodule}

The main results of the recent paper \cite{GMS-24} allow us to bundle together the above local $C^*$-correspondences to obtain a $C^*$-correspondence that captures the global theta correspondence. Let $F$ be a number field with its set of places denoted $S(F)$ and $S_{\rm fin}(F)\subseteq S(F)$ the subset of finite places. As above we write $\A:=\sideset{}{'}{\prod}_{v\in S(F)} (F_v : \mathcal{O}_v)$ and we also write 
$$\A_{\rm fin}:=\sideset{}{'}{\prod}_{v\in S_{\rm fin}(F)} (F_v : \mathcal{O}_v).$$
Let $(G',G)$ be a Type I stable range dual pair over $F$ with $G$ the small group. Assume that $(G',G)$ is not the pair $(O_{2n,2n},Sp_{2n})$. Fix a global character $\chi : \A \to \C^1$ that is trivial on the subgroup $F$ and consider the global oscillator representation $\omega_\chi$ of $G'(\A) \times G(\A)$.  

\subsection{Globalizing the stable range case} For every place $v$ of $F$, Theorem \ref{main-result-two} gives us a $\left (C^*(G'_v), C^*(G_v \right))$-correspondence $\T_v$ associated to the dual pair $(G'_v,G_v)$. For finite places $v$, let $p'_v\in C^*(K_v')\subseteq C^*(G'_v)$ and $p_v\in C^*(K_v)\subseteq C^*(G_v)$ be the projections corresponding to the characteristic functions of the compact open subgroups $K'_v$ and $K_v$ of $G'_v$ and $G_v$. These are the support projections in the corresponding $C^*$-algebras of the trivial representation, projecting onto fixed vectors of the respective compact open subgroup. We let  $\phi_v \in \T_v$ denote the vector in Proposition \ref{findingfixed}. It follows from Proposition \ref{findingfixed} and (\ref{compact-inclusion}) that for all but finitely many places $v$, we have
$$\langle \phi_v, \phi_v \rangle\subrangle{G_v} = p_v,$$
where $\langle {\cdot}, {\cdot}\rangle\subrangle{G_v}$ is the $C^*(G_v)$-valued inner product defined in (\ref{right-inner-product}). This means that the collection of pairs $(\T_v, \phi_v)_v$ is compatible with 
$(C^*(G_v),p_v)_v$ in the sense of \cite[Definition 4.1]{GMS-24}. 

It again follows from Proposition \ref{findingfixed} and (\ref{compact-inclusion}) that for all but finitely many places $v$, we have
$$p'_v{\cdot}\phi_v = \phi_v$$
with respect to the left action of $C^*(G'_v)$ on $\T_v$. This means that the left actions of $(C^*(G'_v),p'_v)_v$ on $(\T_v,\phi_v)_v$ are compatible with respect to right module structures over $(C^*(G_v),p_v)_v$ in the sense of \cite[Definition 5.1]{GMS-24}.

It follows from  \cite[Theorem 4.2 and Proposition 5.2]{GMS-24} that we can bundle our local bimodules together to get a global one: 
$$\T_\A:=\sideset{}{'}{\bigotimes}_{v \in S(F)} (\T_v, \phi_v).$$
This is a $C^*$-correspondence for the pair
$$\left ( \sideset{}{'}{\bigotimes}_{v \in S(F)} (C^*(G'_v), p'_v),  \sideset{}{'}{\bigotimes}_{v \in S(F)} (C^*(G_v), p_v)  \right).$$

Let us assume for now that $(G',G)$ is not an exceptional dual pair. Then both of the global groups $G'(\A)$ and $G(\A)$ are restricted products of the local groups and using \cite[Corollary 3.5]{GMS-24}, we conclude that $\T_\A$ is a  $(C^*(G'(\A)), C^*(G(\A)))$-correspondence as claimed. 

Let $(\pi,\mathcal{H}_\pi)$ be a unitary irreducible representation of $G(\A)$ with factorization $\sideset{}{'}{\bigotimes} \pi_v$. By  \cite[Proposition 4.5]{GMS-24}, we have
$$\T_\A \otimes_{C^*(G(\A))} \mathcal{H}_\pi \simeq  \sideset{}{'}{\bigotimes}_{v \in S(F)} \T_v \otimes_{C^*(G_v)} \mathcal{H}_{\pi_v}.
$$
By Theorem \ref{main-result-two}, we know that at each place, the representation  $\omega_{\chi_v} \otimes  {\bf 1}$ afforded on the Hilbert space $ \T_v \otimes_{C^*(G_v)} \mathcal{H}_{\pi_v}$ is the theta lifts of the contragradient of $\pi_v$. It follows now from the local-global compatibility of the theta correspondence (see Subsection \ref{loca-global-compatibility-2}) that the representation  $\omega_\chi \otimes {\bf 1}$ afforded on the Hilbert space $\T_\A \otimes_{C^*(G(\A))} \mathcal{H}_\pi$ is the contragradient of the (global) theta lift of $\pi$ as claimed.

Let us consider the case of an exceptional dual pair $(G',G)$. In this case, one of the global groups is not the restricted product of the local groups but is a quotient of one. This fact requires us to modify the above arguments slightly. Recall that in this case, one group is odd orthogonal group and the other is symplectic. Assume first that the symplectic group is $G$. As explained in Section \ref{exceptional}, the local groups $G_v$ are taken to be ${\rm Mp}(V_v)$ and the global group $G(\A)$ is a {\em quotient} of the restricted product $\mathbb{G}:=\sideset{}{'}{\prod}_{v \in S(F)} (G_v, K_v)$. As such, there is a natural $*$-homomorphism (see e.g. \cite[Proposition 8.C.8]{Bekka-delaHarpe})
\begin{equation} 
\label{quotient} 
C^*(\mathbb{G}) \to C^*(G(\A)).
\end{equation}
It follows from Corollary 3.5 of \cite{GMS-24} that 
$$C^*(\mathbb{G}) \simeq \sideset{}{'}{\bigotimes}_{v \in S(F)} (C^*(G_v), p_v).$$
On the orthogonal side, there are no issues and the global group $G'(\A)$ is just the restricted product of the local orthogonal groups.
Therefore $\T_A$ is a $(C^*(G'(\A)),C^*(\mathbb{G}))$-correspondence. 
To obtain a $(C^*(G'(\A)),C^*(G(\A)))$-correspondence, we consider 
$$\T_A' := \T_\A \otimes_{C^*(\mathbb{G})} C^*(G(\A))$$
where we view $C^*(G(\A))$ as a right Hilbert $C^*$-module over itself in the standard way and utilize the $*$-homomorphism (\ref{quotient}) to turn it into a $(C^*(\mathbb{G}), C^*(G(\A)))$-correspondence.

Given a unitary representation $(\pi,\mathcal{H}_\pi)$ of $G(\A)$, we observe that 
$$\T'_\A \otimes_{C^*(G(\A))} \mathcal{H}_\pi \simeq \T_\A \otimes_{C^*(\mathbb{G})}  C^*(G(\A)) \otimes_{ C^*(G(\A))} \mathcal{H}_\pi \simeq \T_\A \otimes_{C^*(\mathbb{G})} \mathcal{H}_\pi$$
so that we are really pulling back $\pi$ to $C^*(\mathbb{G})$ via the map (\ref{quotient}) and then inducing it to $C^*(G'(\A))$ via $\T_\A$. This is consistent with the local-global compatibility of the theta correspondence in this exceptional dual pair case.

Finally, let us now swap the roles and consider the easier case where the symplectic group is $G'$. In this case $\T_\A$ is a right Hilbert $C^*$-module over $C^*(G(\A))$ and the group $\mathbb{G'}$ acts on the Hilbert $C^*$-module $\T_\A$ unitarily. This unitary action factors through $G'(\A)$ and integrates to a left action of $C^*(G'(\A))$. Thus  $\T_\A$ is a  $(C^*(G'(\A)), C^*(G(\A)))$-correspondence as claimed. Let us summarize our discussion.

\begin{theorem} 
\label{main-result-three}  
Let $F$ be a number field with set of places $S(F)$. Let $(G',G)$ be a Type I stable range dual pair over $F$ with $G$ the small group. Assume that $(G',G)$ is not the pair $(O_{2n,2n},Sp_{2n})$. Fix a global character $\chi : \A \to \C^1$ that is trivial on the subgroup $F$ and consider the global oscillator representation $\omega_\chi$ of $G'(\A) \times G(\A)$. 
The $(C^*(G'(\A)), C^*(G(\A)))$-correspondence
$$\T_\A:=\sideset{}{'}{\bigotimes}_{v \in S(F)} (\T_v, \phi_v)$$
where $\T_v$ is the $(C^*(G'_v),C^*(G_v))$-correspondence associated via Theorem \ref{main-result-two} to the local dual pair $(G'_v,G_v)$ and $\phi_v$ is the vector in Proposition \ref{findingfixed}, satisfies the following: 
if $(\pi,\mathcal{H}_\pi)$ is a unitary irreducible representation of $G(\A)$, then the unitary representation $\omega_\chi \otimes {\bf 1}$ of $G'(\A)$ realized on the Hilbert space $\T_\A \otimes_{C^*(G(\A))} \mathcal{H}_\pi$ is the theta lift of the contragradient of $\pi$.
\end{theorem}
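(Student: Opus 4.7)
The plan is to construct $\T_\A$ as a restricted infinite tensor product of the local correspondences $\T_v$ using the machinery from \cite{GMS-24}, verify the compatibility hypotheses at unramified places using Proposition \ref{findingfixed}, and then deduce the theta-lifting statement from local-global compatibility. My first step would be to check, for each finite place $v$ outside a sufficiently large finite exceptional set, that the spherical vector $\phi_v \in \T_v$ of Proposition \ref{findingfixed} satisfies the normalization $\langle \phi_v,\phi_v\rangle\subrangle{G_v} = p_v$ and the covariance $p_v'\cdot\phi_v = \phi_v$. For the normalization, the local inner product \eqref{right-inner-product} evaluated at a fixed spherical vector $\phi_v$ (taken to be the characteristic function of the lattice $\mathcal{X}_v$) yields the matrix coefficient $s\mapsto \langle\phi_v,\omega_{\chi_v}(s)\phi_v\rangle$, which is $K_v$-bi-invariant and, because $\phi_v$ is a unit vector fixed by $K_v$, restricts to the characteristic function of $K_v$ up to normalization and hence to $p_v$. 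The covariance $p_v'\cdot\phi_v=\phi_v$ is immediate from \eqref{compact-inclusion} and the fact that the covariance \eqref{covariance} forces $J_v$-fixed vectors for $\rho_{\chi_v}$ to also be $J_v$-fixed for $\omega_{\chi_v}$.

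Once these compatibility conditions are verified, I would invoke \cite[Theorem 4.2]{GMS-24} to assemble $\T_\A := \sideset{}{'}{\bigotimes}_v(\T_v,\phi_v)$ as a Hilbert $C^*$-module over $\sideset{}{'}{\bigotimes}_v(C^*(G_v),p_v)$, and then \cite[Proposition 5.2]{GMS-24} to equip it with a compatible left action of $\sideset{}{'}{\bigotimes}_v(C^*(G_v'),p_v')$. By \cite[Corollary 3.5]{GMS-24}, for non-exceptional dual pairs these two restricted tensor products of $C^*$-algebras are canonically isomorphic to $C^*(G(\A))$ and $C^*(G'(\A))$ respectively, so $\T_\A$ becomes a $(C^*(G'(\A)),C^*(G(\A)))$-correspondence directly. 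For the theta-lift assertion, given an irreducible unitary $(\pi,\mathcal{H}_\pi) = \sideset{}{'}{\bigotimes}\pi_v$, I would apply \cite[Proposition 4.5]{GMS-24} to identify $\T_\A\otimes_{C^*(G(\A))}\mathcal{H}_\pi$ with the restricted tensor product $\sideset{}{'}{\bigotimes}_v \T_v\otimes_{C^*(G_v)}\mathcal{H}_{\pi_v}$, and then Theorem \ref{main-result-two} identifies each local factor with $\theta(\pi_v^*)$. The local-global compatibility of theta correspondence recalled in Subsection \ref{loca-global-compatibility-2} then identifies the restricted tensor product with $\theta(\pi^*)$.

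The step I expect to require the most care is the exceptional case in which one of $G', G$ is the metaplectic cover of a symplectic group (see Section \ref{exceptional}), so that the corresponding adelic group is not literally the restricted product of local groups but a central quotient of it. To handle the case where $G$ is metaplectic, I would first produce $\T_\A$ as a $(C^*(G'(\A)),C^*(\mathbb{G}))$-correspondence, where $\mathbb{G}:=\sideset{}{'}{\prod}_v(G_v,K_v)$, and then tensor on the right with the bimodule $C^*(G(\A))$ viewed as a $(C^*(\mathbb{G}),C^*(G(\A)))$-correspondence via the canonical surjection \eqref{quotient}, setting $\T_\A' := \T_\A\otimes_{C^*(\mathbb{G})} C^*(G(\A))$. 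The key point is that internal tensor product with $C^*(G(\A))$ is transparent on Hilbert-space localization, $\T_\A'\otimes_{C^*(G(\A))}\mathcal{H}_\pi \cong \T_\A\otimes_{C^*(\mathbb{G})}\mathcal{H}_\pi$, so that the identification of the lift reduces to the previous case after pulling $\pi$ back along \eqref{quotient}. The dual situation (when $G'$ is metaplectic) is easier, since the unitary action of $\mathbb{G}'$ on the already-constructed Hilbert module $\T_\A$ factors through $G'(\A)$ by the covariance property \eqref{covariance} and integrates directly to a $C^*(G'(\A))$-action, so no quotient tensor product is needed on the left.

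Finally, I would collect the pieces: the construction plus the three local-global identifications (algebra, module, representation) yield the stated isomorphism of $G'(\A)$-representations, and the compatibility argument shows that $\omega_\chi\otimes\mathbf{1}$ on $\T_\A\otimes_{C^*(G(\A))}\mathcal{H}_\pi$ matches the global theta lift of $\pi^*$. One minor bookkeeping point I would check along the way is that the splittings $\iota_v$ of Subsection \ref{dual groups} combine into the global splitting \eqref{global-splitting} compatibly with the chosen spherical vectors $\phi_v$, so that the integrated global action of $C^*(G'(\A))$ on $\T_\A$ really does agree with the one induced place by place.
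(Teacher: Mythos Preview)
Your proposal is correct and follows essentially the same approach as the paper: verify the compatibility conditions $\langle \phi_v,\phi_v\rangle\subrangle{G_v}=p_v$ and $p_v'\cdot\phi_v=\phi_v$ at almost all places via Proposition \ref{findingfixed} and \eqref{compact-inclusion}, assemble $\T_\A$ using \cite[Theorem 4.2, Proposition 5.2, Corollary 3.5]{GMS-24}, factorize the localization via \cite[Proposition 4.5]{GMS-24}, apply Theorem \ref{main-result-two} placewise, and conclude by local-global compatibility, with the exceptional metaplectic case handled exactly as you describe. The only minor caveat is that your justification of $\langle \phi_v,\phi_v\rangle\subrangle{G_v}=p_v$ is slightly incomplete as written (bi-$K_v$-invariance alone does not force vanishing outside $K_v$), but the paper itself simply cites Proposition \ref{findingfixed} and \eqref{compact-inclusion} at this step without further detail.
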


\subsection{Globalizing the general type I case} 
Let now $(G',G)$ be a Type I dual pair over $F$ with $G$ the small group. Everything we have done above can be adapted to this case. Indeed, in this case, Theorem \ref{main-result-one} gives us $\left ( C^*(G'_v),C^*_r(G_v) \right )$-correspondences $\T_v^r$. Our arguments above apply verbatim and we can bundle these local correspondences together to obtain 
$$\T_\A^r:=\sideset{}{'}{\bigotimes}_{v \in S(F)} (\T_v^r, \phi_v).$$
which is a $C^*$-correspondence for the pair
$$\left (C^*(G'(\A)),C^*_r(G(\A))\right)=\left ( \sideset{}{'}{\bigotimes}_{v \in S(F)} (C^*(G'_v), p'_v),  \sideset{}{'}{\bigotimes}_{v \in S(F)} (C^*_r(G_v), p_v)  \right).$$
Arguing as above, we obtain the following.

\begin{theorem} 
\label{main-result-four}  
Let $F$ be a number field with set of places $S(F)$. Let $(G',G)$ be a Type I dual pair over $F$ with $G$ the small group. Fix a global character $\chi : \A \to \C^1$ that is trivial on the subgroup $F$ and consider the global oscillator representation $\omega_\chi$ of $G'(\A) \times G(\A)$. 
The $\left (C^*(G'(\A)), C^*_r(G(\A)) \right )$-correspondence
$$\T_\A^r:=\sideset{}{'}{\bigotimes}_{v \in S(F)} (\T_v, \phi_v)$$
where $\T_v^r$ is the $\left (C^*(G'_v),C^*_r(G_v) \right )$-correspondence associated via Theorem \ref{main-result-one} to the local dual pair $(G'_v,G_v)$ and $\phi_v$ is the vector in Proposition \ref{findingfixed}, satisfies the following: 
if $(\pi,\mathcal{H}_\pi)$ is a tempered irreducible representation of $G(\A)$, then the unitary representation $\omega_\chi \otimes {\bf 1}$ of $G'(\A)$ realized on the Hilbert space $\T_\A^r \otimes_{C^*_r(G(\A))} \mathcal{H}_\pi$ is the theta lift of the contragradient of $\pi$.
\end{theorem}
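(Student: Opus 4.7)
The plan is to follow the stable range argument for Theorem \ref{main-result-three} essentially verbatim, replacing the role of the maximal group $C^*$-algebra of the small group with its reduced version. First I would invoke Theorem \ref{main-result-one} at every place $v \in S(F)$ to produce $(C^*(G'_v), C^*_r(G_v))$-correspondences $\T_v^r$ with their unitary left $G'_v$-actions. To set up the restricted tensor product, I would then need to verify the two compatibility conditions of \cite{GMS-24}: at almost all places $v$ the distinguished smooth vector $\phi_v$ from Proposition \ref{findingfixed} satisfies $\langle \phi_v,\phi_v\rangle\subrangle{G_v} = p_v$ as an element of $C^*_r(G_v)$, and $p'_v\cdot \phi_v=\phi_v$ under the left action. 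Both identities are genuinely identities on the level of the smooth representation $\s_v$ (they reduce to $\omega_{\chi_v}(K_v)\phi_v=\phi_v$ and $\omega_{\chi_v}(K'_v)\phi_v=\phi_v$, which follow from Proposition \ref{findingfixed} together with \eqref{compact-inclusion}), so they are insensitive to whether one completes into $C^*(G_v)$ or $C^*_r(G_v)$.

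With compatibility in hand, \cite[Theorem 4.2 and Proposition 5.2]{GMS-24} produces the bundled correspondence $\T_\A^r$ over the pair $(\sideset{}{'}{\bigotimes}_v(C^*(G'_v),p'_v), \sideset{}{'}{\bigotimes}_v(C^*_r(G_v),p_v))$. By \cite[Corollary 3.5]{GMS-24}, the second of these restricted tensor products is canonically isomorphic to $C^*_r(G(\A))$ (and the first to $C^*(G'(\A))$) whenever $G(\A)$ and $G'(\A)$ really are restricted products of their local factors. This gives the $C^*$-correspondence in the non-exceptional setting. In the exceptional case (odd orthogonal $\times$ symplectic) one of the adelic groups is a central quotient of its restricted product, and I would handle it exactly as in the paragraphs preceding Theorem \ref{main-result-three}: compose with the quotient $*$-homomorphism $C^*(\mathbb{G})\to C^*(G(\A))$ (or its $C^*_r$ analogue, if the small group is the metaplectic one), or in the other sub-case factor the left $\mathbb{G}'$-action through $G'(\A)$. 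The argument that internal tensor product is unchanged under tensoring by the quotient is the same as in the stable range discussion.

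Next, given a tempered irreducible $(\pi,\mathcal{H}_\pi)=\sideset{}{'}{\bigotimes}_v(\pi_v,\mathcal{H}_{\pi_v})$ of $G(\A)$, tempered-ness passes to each local factor $\pi_v$ (and for almost all $v$, $\pi_v$ is unramified, so the vector $\phi_v \otimes \xi_v^\circ$ represents a unit norm class in $\T_v^r\otimes_{C^*_r(G_v)}\mathcal{H}_{\pi_v}$ with the spherical unit vector $\xi_v^\circ$). Applying \cite[Proposition 4.5]{GMS-24} yields a canonical unitary identification
\[
\T_\A^r \otimes_{C^*_r(G(\A))} \mathcal{H}_\pi \;\simeq\; \sideset{}{'}{\bigotimes}_{v\in S(F)} \bigl(\T_v^r \otimes_{C^*_r(G_v)} \mathcal{H}_{\pi_v},\ \phi_v\otimes\xi_v^\circ\bigr),
\]
and the left $G'(\A)$-action (via $\omega_\chi\otimes\mathbf{1}$) factorizes accordingly. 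At each place Theorem \ref{main-result-one} identifies the factor with $\theta(\pi_v^*)$, and the restricted tensor product of local theta lifts is the global theta lift of the contragradient by the local-global compatibility recalled in Subsection \ref{loca-global-compatibility-2}.

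The main obstacle I anticipate is not algebraic but bookkeeping: making sure the spherical vectors $\phi_v$ continue to generate the spherical vector of the local theta lift after the internal tensor product, i.e.\ that the isomorphism $\theta(\pi_v^*)\simeq \T_v^r\otimes_{C^*_r(G_v)}\mathcal{H}_{\pi_v}$ really sends $\phi_v \otimes \xi_v^\circ$ to a spherical unit vector in $\theta(\pi_v^*)$. This is needed to define the restricted tensor product on the right-hand side consistently with the global theta lift, and it follows from the $J_v$-invariance of $\phi_v$ together with the $\iota_v(K_v') \subseteq J_v$ inclusion from \eqref{compact-inclusion}. Once that is checked, the passage from local to global is formal.
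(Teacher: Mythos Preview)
Your proposal is correct and follows essentially the same approach as the paper, which proves Theorem \ref{main-result-four} by simply saying ``Arguing as above, we obtain the following'' after the detailed argument for Theorem \ref{main-result-three}. You have spelled out precisely the adaptations needed (reduced in place of maximal on the $G$-side, the same compatibility checks via Proposition \ref{findingfixed} and \eqref{compact-inclusion}, the same appeals to \cite[Theorem 4.2, Propositions 4.5 and 5.2, Corollary 3.5]{GMS-24}, and the same treatment of the exceptional case), and your final paragraph correctly identifies and resolves the only bookkeeping point the paper leaves implicit.
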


\subsection{Domain and range of the theta correspondence} In the same spirit as Corollary \ref{thetamorita}, we can describe the precise domain of definition and range for theta correspondence over the finite adeles. We assume that $(G',G)$ be a type I dual pair over the number field $F$ that is either orthosymplectic or unitary. For each $v\in S_{\rm fin}(F)$, the discussion preceding Corollary \ref{thetamorita} produces ideals 
$$C^*_\theta(G(F_v))\subseteq C^*_r(G(F_v))\quad \mbox{and}\quad C^*_\theta(G'(F_v))\subseteq C^*(G'(F_v))/\ker\Omega^{r},$$
and a the $(C^*(G'(F_v)),C^*_r(G(F_v))$-correspondence $\T_v^r$ constructed as above is in fact a Morita equivalence from $C^*_\theta(G(F_v))$ to $C^*_\theta(G'(F_v))$. In the considerations above, we note that $p_v:=\langle\phi_v,\phi_v\rangle_{C^*_r(G(F_v))}\in C^*_\theta (G(F_v))$ and $p_v'$ acts as a $C^*_r(G(F_v))$-compact operator so by an abuse of notation we identify $p_v'$ with its image $p_v'\in C^*_\theta (G'(F_v))$. We can therefore form
$$C^*_\theta(G(\A_{\rm fin}))=\sideset{}{'}{\bigotimes}_{v \in S_{\rm fin}(F)} (C^*_\theta(G_v), p_v)\quad\mbox{and}\quad C^*_\theta(G'(\A_{\rm fin}))=\sideset{}{'}{\bigotimes}_{v \in S_{\rm fin}(F)} (C^*_\theta(G'_v), p_v'),$$
sp that by \cite[Proposition 5.5]{GMS-24} form ideals in $C^*_r(G(\A_{\rm fin}))$ and $C^*(G'(\A_{\rm fin}))/\ker\Omega_{\A_{\rm fin}}^{r}$, respectively. In particular, the unitary duals of $C^*_\theta(G(\A_{\rm fin}))$ and $C^*_\theta(G'(\A_{\rm fin}))$ forms an open subset in the tempered unitary dual $\widehat{G(\A_{\rm fin})}_{\rm t}$ and the unitary dual of $C^*_\theta(G'(\A_{\rm fin}))$ forms a locally closed subset of $\widehat{G'(\A_{\rm fin})}$, respectively. The following theorem is concluced as above, using \cite[Proposition 5.5]{GMS-24}.

\begin{corollary}
\label{thetamoritaglobal}
Let $(G',G)$ be a type I dual pair over a number field $F$ with $G$ the smaller group. Assume that $(G',G)$ is either ortho-symplectic or unitary. Fix a global character $\chi : \A \to \C^1$ that is trivial on the subgroup $F$ and consider the global oscillator representation $\omega_\chi$ of $G'_{\A_{\rm fin}} \times G_{\A_{\rm fin}}$. The $\left (C^*_\theta(G'(\A_{\rm fin})), C^*_\theta(G(\A_{\rm fin})) \right )$-correspondence
$$\T_{\A_{\rm fin}}^r:=\sideset{}{'}{\bigotimes}_{v \in S_{\rm fin}(F)} (\T_v^r, \phi_v)$$
is a Morita equivalence implementing a homeomorphism from the unitary dual of $C^*_\theta(G(\A_{\rm fin}))$ to the unitary dual of $C^*_\theta(G'(\A_{\rm fin}))$ via Rieffel induction. In particular, the domain of the (finite) global theta correspondence in $\widehat{G(\A_{\rm fin})}_t$ is the open set $\widehat{C^*_\theta(G(\A_{\rm fin}))}$ and its range is the locally closed set $\widehat{C^*_\theta(G'(\A_{\rm fin}))}\subseteq \widehat{G'(\A_{\rm fin})}$.
\end{corollary}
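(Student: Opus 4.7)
The plan is to derive the corollary by globalizing the local Morita equivalences of Corollary \ref{thetamorita} using the restricted-tensor-product machinery of \cite[Proposition 5.5]{GMS-24}, in close parallel with the construction of $\T_{\A}^r$ in Theorem \ref{main-result-four}. The first step is to verify compatibility of the local Morita data. At each finite place $v$, Corollary \ref{thetamorita} provides a Morita equivalence $\T_v^r$ between the ideals $C^*_\theta(G(F_v))\subseteq C^*_r(G(F_v))$ and $C^*_\theta(G'(F_v))\subseteq C^*(G'(F_v))/\ker\Omega^r_v$. By Proposition \ref{findingfixed} combined with \eqref{compact-inclusion}, at almost all $v$ the anchor vector $\phi_v$ satisfies $\langle\phi_v,\phi_v\rangle\subrangle{G_v}=p_v\in C^*_\theta(G(F_v))$ and $p_v'\cdot\phi_v=\phi_v$, with $p_v'$ acting as a $C^*_r(G(F_v))$-compact operator (by Theorem \ref{left-surject}) and thereby identified with an element of $C^*_\theta(G'(F_v))$. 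This puts the family $(\T_v^r,\phi_v)_v$ in the compatibility setting of \cite[Definitions 4.1 and 5.1]{GMS-24}, now taken with respect to the theta-ideals rather than the full group $C^*$-algebras.

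The second step is to apply \cite[Proposition 5.5]{GMS-24} to form
\[
\T_{\A_{\rm fin}}^r \;=\; \sideset{}{'}{\bigotimes}_{v\in S_{\rm fin}(F)}(\T_v^r,\phi_v)
\]
as a $(C^*_\theta(G'(\A_{\rm fin})),C^*_\theta(G(\A_{\rm fin})))$-correspondence. Here the fact that each anchor $\phi_v$ produces inner products equal to the defining projection $p_v$, and that its left stabiliser is the projection $p_v'$ acting as a compact operator on the right module, ensures that the restricted tensor product stays within the theta-ideals on both sides, that it is full, and that the $C^*_\theta(G(\A_{\rm fin}))$-compact operators on it are exactly the image of the restricted tensor product of the local compacts. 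This is precisely the statement that $\T_{\A_{\rm fin}}^r$ is a global Morita equivalence.

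Once global Morita equivalence is established, Rieffel induction along $\T_{\A_{\rm fin}}^r$ implements a categorical equivalence between the representation categories of $C^*_\theta(G(\A_{\rm fin}))$ and $C^*_\theta(G'(\A_{\rm fin}))$, hence a homeomorphism of their spectra. The identifications of these spectra as an open subset of $\widehat{G(\A_{\rm fin})}_t$ and a locally closed subset of $\widehat{G'(\A_{\rm fin})}$ follow from the standard ideal-to-open-subset correspondence for $C^*$-spectra, applied to a closed two-sided ideal and to an ideal inside a quotient, respectively. The identification of the resulting bijection with the (finite) global theta correspondence then follows from the local-global compatibility recalled in Subsection \ref{loca-global-compatibility-2}. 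The principal technical obstacle is the second step: verifying that restricted tensor products of Morita equivalences taken with respect to ideal subquotients are again Morita equivalences. The key mechanism is that anchoring by the projections $p_v$ and $p_v'$ forces the tensor construction to be carried out inside the theta-ideals at almost every place, which reduces the question to the machinery of \cite[Proposition 5.5]{GMS-24}; some care is needed to handle the quotient structure on the $G'$-side when checking that the left-action algebra is identified with the global compacts.
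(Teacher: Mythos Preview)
Your proposal is correct and follows essentially the same approach as the paper: the paper's argument is exactly to invoke the local Morita equivalences from Corollary \ref{thetamorita}, verify that the anchors $\phi_v$ land in the theta-ideals on both sides (with $p_v'$ identified via its action as a compact operator), and then apply \cite[Proposition 5.5]{GMS-24} to conclude. Your write-up is in fact more explicit than the paper's, which simply refers back to the preceding discussion and cites \cite[Proposition 5.5]{GMS-24}.
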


\begin{remark}
The corresponding statement to Corollary \ref{thetamoritaglobal} for stable range dual pairs can be phrased as the fact that the (finite) global theta correspondence in $\widehat{G(\A_{\rm fin})}$ is the open set $\widehat{C^*_\theta(G(\A_{\rm fin}))}$ and its range is the locally closed set $\widehat{C^*_\theta(G'(\A_{\rm fin}))}\subseteq \widehat{G'(\A_{\rm fin})}$, where $C^*_\theta(G(\A_{\rm fin}))$ is now formed from the local, closed ideals $C^*_\theta(G(F_v))\subseteq C^*(G(F_v))$ spanned by the inner products $\langle \T_v,\T_v\rangle_{C^*(G(F_v))}$. 

\end{remark}


\section{Equal rank case revisited}
Here we revisit some of the results of \cite{mesland-sengun-ER} from the perspective of our current paper. Let $(G',G)$ be a Type I dual pair of equal rank over a non-archimedean local field. Then both the oscillator representations $\omega|_{\mid G'}$ and $\omega|_{\mid G}$ are tempered, and $\mathbb{S}$ is an $(\mathcal{S}(G'),\mathcal{S}(G))$ bimodule via the formulas \eqref{right-module} and \eqref{left-action}. The left $C^{*}(G')$ module structure of the bimodule $\T^{r}$ in fact factors through the quotient map $C^{*}(G')\to C^{*}_r(G')$. Thus, for equal rank pairs the bimodule $\T^{r}$ is in fact a $(C^{*}_{r}(G'),C^{*}_{r}(G))$-correspondence.

We now take a look at the global situation. We assume that $(G',G)$ is an equal rank dual pair over the number field $F$. For each $v\in S_{\rm fin}(F)$, \cite{mesland-sengun-ER} produces ideals 
$$C^*_\theta(G(F_v))\subseteq C^*_r(G(F_v))\quad \mbox{and}\quad C^*_\theta(G'(F_v))\subseteq C^*_r(G'(F_v)),$$
and the $(C^*_r(G'(F_v)),C^*_r(G(F_v))$-correspondence $\T_v^r$ constructed as above is in fact a Morita equivalence from $C^*_\theta(G(F_v))$ to $C^*_\theta(G'(F_v))$. In other words, $\T_v^r$ carries the structure of a right $C^*_r(G'(F_v))$-Hilbert $C^*$-module and a left $C^*_r(G(F_v))$-Hilbert $C^*$-module that are compatible as well as the right inner products $\langle \T_v^r,\T_v^r\rangle_{C^*_r(G'(F_v))}$ densely spanning $C^*_\theta(G'(F_v))$ and the left inner products ${}_{C^*(G_r(F_v))}\langle \T_v^r,\T_v^r\rangle$ densely spanning $C^*_\theta(G(F_v))$. This implies that the left action defines an isomorphism $C^*_\theta(G'(F_v))\simeq \mathbb{K}_{C^*_\theta(G(F_v))}(\T_v^r)$. In the considerations above, we note that \[p_v:=\langle\phi_v,\phi_v\rangle_{C^*_r(G(F_v))}\in C^*_\theta (G(F_v)),\quad\textnormal{and}\quad p_v':={}_{C^*_r(G(F_v))}\langle\phi_v,\phi_v\rangle\in C^*_\theta (G'(F_v)),\] so we can form
$$C^*_\theta(G(\A_{\rm fin}))=\sideset{}{'}{\bigotimes}_{v \in S_{\rm fin}(F)} (C^*_\theta(G_v), p_v)\quad\mbox{and}\quad C^*_\theta(G'(\A_{\rm fin}))=\sideset{}{'}{\bigotimes}_{v \in S_{\rm fin}(F)} (C^*_\theta(G'_v), p_v'),$$
that by \cite[Proposition 5.5]{GMS-24} form ideals in $C^*_r(G(\A_{\rm fin}))$ and $C^*_r(G'(\A_{\rm fin}))$, respectively. In particular, the unitary duals of $C^*_\theta(G(\A_{\rm fin}))$ and $C^*_\theta(G'(\A_{\rm fin}))$, respectively, form open subsets in the tempered unitary duals $\widehat{G(\A_{\rm fin})}_{\rm t}$ and $\widehat{G'(\A_{\rm fin})}_{\rm t}$, respectively. The following theorem is concluced as above, using \cite[Proposition 5.5]{GMS-24}.

\begin{theorem} 
\label{main-result-equal}  
Let $F$ be a number field set of finite places $S_{\rm fin}(F)$. Let $(G',G)$ be an equal rank dual pair over $F$. Fix a global character $\chi : \A \to \C^1$ that is trivial on the subgroup $F$ and consider the global oscillator representation $\omega_\chi$ of $G'_{\A_{\rm fin}} \times G_{\A_{\rm fin}}$. 
The $\left (C^*_\theta(G'(\A_{\rm fin})), C^*_\theta(G(\A_{\rm fin})) \right )$-correspondence
$$\T_{\A_{\rm fin}}^r:=\sideset{}{'}{\bigotimes}_{v \in S_{\rm fin}(F)} (\T_v^r, \phi_v)$$
is a Morita equivalence implementing a homeomorphism from the unitary dual of $C^*_\theta(G(\A_{\rm fin}))$ to the unitary dual of $C^*_\theta(G'(\A_{\rm fin}))$ via Rieffel induction. In particular, the domain of the (finite) global theta correspondence in $\widehat{G(\A_{\rm fin})}_t$ is the open set $\widehat{C^*_\theta(G(\A_{\rm fin}))}$ and its range is the locally closed set $\widehat{C^*_\theta(G'(\A_{\rm fin}))}\subseteq \widehat{G'(\A_{\rm fin})}_t$.
\end{theorem}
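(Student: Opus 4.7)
The plan is to globalise the local Morita equivalences recalled in the paragraph immediately preceding the theorem by invoking the restricted tensor product machinery of \cite{GMS-24}. The essential local input is that for every finite place $v$, the bimodule $\T_v^r$ realises a Morita equivalence between $C^*_\theta(G(F_v))$ and $C^*_\theta(G'(F_v))$ (as established in \cite{mesland-sengun-ER}), and carries a distinguished vector $\phi_v$ whose inner products on both sides yield the canonical projections $p_v$ and $p_v'$, as recorded in Proposition \ref{findingfixed} and the text preceding the statement of the theorem.

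First I would check that the collection $(\T_v^r,\phi_v)_v$ satisfies the compatibility hypotheses of \cite[Definitions 4.1 and 5.1]{GMS-24} simultaneously on both sides. On the right, the identity $\langle \phi_v,\phi_v\rangle\subrangle{G_v}=p_v$ holds at almost every place by Proposition \ref{findingfixed} together with the inclusions \eqref{compact-inclusion}; by the symmetry afforded by equal rank, the same argument applied to the left Hilbert module structure gives ${}_{G'_v}\!\langle \phi_v,\phi_v\rangle=p_v'$ at almost every place. Having verified this two-sided compatibility, I would form the restricted tensor product $\T_{\A_{\rm fin}}^r:=\sideset{}{'}{\bigotimes}_v(\T_v^r,\phi_v)$ and apply \cite[Proposition 5.5]{GMS-24} twice: once to see that $\T_{\A_{\rm fin}}^r$ is a full right Hilbert module over $C^*_\theta(G(\A_{\rm fin}))$, and once to see that it is a full left Hilbert module over $C^*_\theta(G'(\A_{\rm fin}))$.

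The main obstacle, and the step on which the whole argument turns, is showing that the left action of $C^*_\theta(G'(\A_{\rm fin}))$ identifies with the algebra $\mathbb{K}_{C^*_\theta(G(\A_{\rm fin}))}(\T_{\A_{\rm fin}}^r)$ of compact module operators, i.e.\ that the two sided Hilbert module structures so obtained really assemble into a Morita equivalence bimodule. Locally we have $C^*_\theta(G'(F_v))\simeq \mathbb{K}_{C^*_\theta(G(F_v))}(\T_v^r)$, and the globalisation amounts to the statement that the restricted tensor product of the local compacts, formed with respect to the projections $p_v'$, agrees with the compacts on the restricted tensor product of the modules. This is precisely the content of \cite[Proposition 5.5]{GMS-24} in the Morita equivalence setting, applied via the identification of $p_v'$ with a rank-one projection in $\mathbb{K}_{C^*_\theta(G(F_v))}(\T_v^r)$ coming from $\phi_v$.

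Once $\T_{\A_{\rm fin}}^r$ is established as a $(C^*_\theta(G'(\A_{\rm fin})),C^*_\theta(G(\A_{\rm fin})))$-equivalence bimodule, the topological conclusion is formal: Rieffel induction through an equivalence bimodule is an equivalence of representation categories and induces a homeomorphism of spectra. Since $C^*_\theta(G(\A_{\rm fin}))$ and $C^*_\theta(G'(\A_{\rm fin}))$ are closed two-sided ideals in $C^*_r(G(\A_{\rm fin}))$ and $C^*_r(G'(\A_{\rm fin}))$ respectively, their spectra embed as open subsets of the tempered duals $\widehat{G(\A_{\rm fin})}_t$ and $\widehat{G'(\A_{\rm fin})}_t$. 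The identification of these open sets with the domain and range of the finite global theta correspondence then follows from Theorem \ref{main-result-four} combined with the local-global compatibility of the theta correspondence recalled in Subsection \ref{loca-global-compatibility-2}, which ensures that factorising a tempered adelic representation and applying the correspondence place-by-place agrees with the global correspondence implemented by $\T_{\A_{\rm fin}}^r$.
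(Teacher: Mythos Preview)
Your proposal is correct and follows essentially the same approach as the paper: the paper's proof consists only of the sentence ``concluded as above, using \cite[Proposition 5.5]{GMS-24}'', referring back to the preceding paragraph where the local Morita equivalences from \cite{mesland-sengun-ER}, the two-sided inner product identities $p_v=\langle\phi_v,\phi_v\rangle$ and $p_v'={}_{}\langle\phi_v,\phi_v\rangle$, and the restricted tensor product construction are assembled exactly as you describe. Your write-up simply makes explicit the steps the paper leaves implicit, and your appeal to Theorem \ref{main-result-four} together with local--global compatibility for the identification with the theta correspondence is consistent with how the paper argues in the parallel cases.
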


\section{Rallis inner product formula}
\label{sec:ralla}

In this section, we present a reformulation of the celebrated Rallis inner product formula in the framework of our $C^*$-algebraic approach to the global theta correspondence. Let $(G',G)$ be a Type I irreducible dual pair over a number field $F$. Let $\A$ denote the ring of adeles of $F$. Let $(\pi,V_\pi)$ cuspidal automorphic representation of $G(\A)$, that is, $\pi$ is an irreducible subrepresentation of the regular representation of $G(\A)$ on $L^2_0(G_F \backslash G(\A))$, where $L^2_0(G_F \backslash G(\A))$ is the cuspidal subspace of $L^2(G_F \backslash G(\A))$.

We fix a character $\chi : \A \to \C^1$ that is trivial on $F$. Let $\omega=\omega_\chi$ denote the associated global oscillator representation $\omega$ of $G'(\A) \times G(\A)$ realized on some $L^2(X_\A)$. Given cuspidal automorphic form $f \in V_\pi$  and a Schwartz function $\phi \in S(X_\A)$ on $X_\A$, function $\theta_\phi(f)$ on $G'(\A)$ given by the rule 
\begin{equation} 
\label{theta-kernel} 
g \mapsto \int_{G_F \backslash G(\A)} \left( \sum_{x \in X_F} \omega(gh)(\phi)(x) \right )  f(h) \mathrm{d}h 
\end{equation}
is well-defined and is an automorphic form on $G'(\A)$. The automorphic forms produced by the theta kernel method (\ref{theta-kernel}) are closely related to global theta correspondence. Indeed, let us denote by $\Theta(\pi)$ the representation of $G'(\A)$ afforded on the submodule of the space of automorphic forms on $G'(\A)$ spanned by
$$V(\pi):=\{ \theta_{\phi}(f) \mid f \in V_\pi, \ \ \phi \in S(X_\A) \}.$$
Let $\theta^{\mathrm{abs}}(\pi)$ denote the global theta lift of $\pi$, as discussed in the previous sections.

\begin{proposition} \label{abstract-equals-kernel}
Assume that $V(\pi)$ is non-trivial and consists of square-integrable automorphic forms on $G'(\A)$. Then $V(\pi)$ is irreducible and moreover, it is isomorphic to $\theta^{\mathrm{abs}}(\pi)$.
\end{proposition}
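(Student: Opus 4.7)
The plan is to construct an explicit $G'(\A)$-equivariant linear map $\mathcal{Z}$ from the Hilbert space realizing the abstract theta lift of Theorem~\ref{main-result-three} (resp.\ Theorem~\ref{main-result-four}) into $L^2_0(G'_F \backslash G'(\A))$ via the theta kernel, and then to transfer irreducibility of the source to $V(\pi)$.

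First, I would set $\mathcal{Z}(\phi \otimes f) := \theta_\phi(f)$ on the algebraic tensor product of Schwartz functions and smooth vectors in $V_\pi$. The $G'(\A)$-equivariance (with $G'(\A)$ acting on the source via $\omega_\chi \otimes 1$ and on the target by right translation) is immediate from the defining formula (\ref{theta-kernel}). Next, using the right module structure $\phi \cdot b = \int_{G(\A)} b(s) \omega(s^{-1}) \phi \, \mathrm{d}s$ on $\mathcal{S}(X_\A)$ and the right regular $G(\A)$-action on $V_\pi$, a Fubini and change-of-variables computation yields the balancing identity $\theta_{\phi \cdot b}(f) = \theta_\phi(\pi(b) f)$ for every $b$ in a dense subalgebra of the relevant group $C^*$-algebra. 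Hence $\mathcal{Z}$ descends to the balanced algebraic tensor product sitting densely (by Lemma~\ref{lem: dense subspace}) inside $\T_\A \otimes_{C^*(G(\A))} V_\pi$ in the stable range case, and inside $\T_\A^r \otimes_{C^*_r(G(\A))} V_\pi$ in the general Type~I case.

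Second, I would extend $\mathcal{Z}$ to a bounded $G'(\A)$-equivariant map between Hilbert space completions. The source is a unitary $G'(\A)$-module; the target is the unitary $G'(\A)$-module $L^2_0(G'_F \backslash G'(\A))$, which by hypothesis contains every $\theta_\phi(f)$. Because both actions are strongly continuous, one invokes automatic continuity: the equivariance of the densely defined $\mathcal{Z}$, combined with irreducibility of its source (established in the next step), forces the domain of any closed extension to be the whole space, hence $\mathcal{Z}$ is bounded.

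Third, I would identify the source of $\mathcal{Z}$ via Theorem~\ref{main-result-three} or \ref{main-result-four} with the global abstract theta lift, which factors as a restricted tensor product $\widehat{\bigotimes}'_v \theta(\pi_v^*)$ of local theta lifts. By Howe duality at every place, each local factor is an irreducible unitary $G'(F_v)$-representation, and consequently the restricted tensor product is an irreducible unitary $G'(\A)$-representation. Since $V(\pi) \neq 0$ by assumption, the map $\mathcal{Z}$ is nonzero, so by Schur's lemma it is injective with closed, irreducible image inside $L^2_0(G'_F \backslash G'(\A))$. This image contains $V(\pi)$ as a dense (smooth vector) subspace, and the standard identification of the smooth subrepresentation of an irreducible unitary automorphic subrepresentation then gives both the irreducibility of $V(\pi)$ and the isomorphism $V(\pi) \simeq \theta^{\mathrm{abs}}(\pi)$.

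The main obstacle will be the automatic continuity step: outside the stable range we do not yet have the Rallis inner product formula (which, per Corollary~\ref{cor3} in the stable range, produces an explicit isometric embedding), so there is no a~priori operator norm bound on $\mathcal{Z}$. The resolution is to exploit that the source is irreducible unitary as a $G'(\A)$-module, so that the closure of the graph of $\mathcal{Z}$ in source $\oplus$ target yields a closed invariant subspace which must project onto all of the source; this forces $\mathcal{Z}$ to be everywhere-defined and bounded.
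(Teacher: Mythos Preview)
The paper does not prove this proposition at all; it simply cites \cite[Proposition 1.2]{Gelbart-Rogawski-Soudry} and \cite[Proposition 3.1]{Gan-AWS}. The argument in those references is purely automorphic-representation-theoretic and makes no use of the $C^*$-bimodule machinery: one takes an irreducible summand $\sigma$ of the $L^2$-closure of $V(\pi)$, uses the theta kernel pairing to produce a nonzero element of ${\rm Hom}_{G'(\A)\times G(\A)}(\omega^\infty,\sigma\otimes\pi)$, and concludes $\sigma\simeq\theta^{\mathrm{abs}}(\pi)$ by global Howe duality; uniqueness of the theta lift then forces every summand to be isomorphic, hence $V(\pi)$ is irreducible. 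In particular that argument applies to an arbitrary Type~I dual pair and arbitrary cuspidal $\pi$.

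Your approach via Theorems~\ref{main-result-three}/\ref{main-result-four} is genuinely different, but it has two real gaps.

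\emph{Scope.} Theorem~\ref{main-result-three} requires the stable range (and excludes $(O_{2n,2n},Sp_{2n})$), while Theorem~\ref{main-result-four} requires $\pi$ tempered. The proposition is stated for a general Type~I pair and a general cuspidal $\pi$, and cuspidal automorphic representations of classical groups need not be tempered. So your argument does not cover the full statement. There is also a contragredient mismatch you do not address: those theorems identify the internal tensor product with $\theta^{\mathrm{abs}}(\pi^*)$, not $\theta^{\mathrm{abs}}(\pi)$.

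\emph{Automatic continuity.} Your graph-closure argument does not go through as written. Irreducibility of the source rules out proper \emph{closed} invariant subspaces; the first-coordinate projection of the graph closure $\Gamma$ is the domain of the closure of $\mathcal{Z}$ (assuming $\mathcal{Z}$ is even closable), which is a dense invariant subspace but not a priori closed, so irreducibility alone does not force it to be everything. You also never verify closability: if $\xi_n\to 0$ in $\T_\A\otimes V_\pi$ and $\mathcal{Z}(\xi_n)\to g$ in $L^2$, there is no argument given that $g=0$. The clean way to get boundedness is exactly the Rallis inner product formula (giving an isometry, as in Proposition~\ref{rallisasisom}), which you explicitly say you want to avoid; without it, or some substitute norm estimate, the extension step fails.
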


For proof, see \cite[Proposition 1.2]{Gelbart-Rogawski-Soudry} or \cite[Proposition 3.1]{Gan-AWS}. The following result of Rallis has been an important tool since its inception in the mid 1980's.

\begin{theorem}(Rallis Inner Product Formula) 
\label{RIPF} 
Let $\pi$ be a cuspidal automorphic representation of $G(\A)$. Fix 
$f_1,f_2 \in V_\pi$ and $\phi_1,\phi_2 \in S(X_\A)$. Assume that 
\begin{enumerate}
\item $\omega$ is integrable as a representation of $G(\A)$, 
\item the integral
\begin{equation} \label{norm-lift} \int_{G'_F \backslash G'(\A)} \theta_{\phi_1}(f_1)(g) \overline{\theta_{\phi_2}(f_2)(g)} dg
\end{equation}
converges.
\end{enumerate}
Then the integral (\ref{norm-lift}) equals
\begin{equation} \label{RIPF-1} \Big \langle \pi(\Psi_{\phi_1,\phi_2})(f_1), f_2 \Big \rangle 
\end{equation}
where $\Psi_{\phi_1,\phi_2} \in L^1(G(\A))$ is the matrix-coefficient function of $\omega$ associated to $\phi_1,\phi_2$.
\end{theorem}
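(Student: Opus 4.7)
The plan is to unfold the definition of the theta kernels, apply Fubini to interchange orders of integration (justified by the $L^1$ hypothesis on $\omega|_G$ together with the convergence of (\ref{norm-lift})), and then collapse the resulting double sum over $X_F$ using $G_F$-equivariance of the theta kernel together with cuspidality of $f_1, f_2$.

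First I would substitute the definition (\ref{theta-kernel}) of $\theta_{\phi_i}(f_i)$ into the left-hand side of the identity to obtain
\begin{equation*}
\int_{G'_F \backslash G'(\A)} \int_{G_F \backslash G(\A)} \int_{G_F \backslash G(\A)} \Theta(g,h_1; \phi_1) \overline{\Theta(g,h_2; \phi_2)} \, f_1(h_1) \overline{f_2(h_2)} \, \mathrm{d}h_1 \, \mathrm{d}h_2 \, \mathrm{d}g,
\end{equation*}
where $\Theta(g,h;\phi) := \sum_{x \in X_F} \omega(gh)(\phi)(x)$ denotes the theta kernel. The integrability and convergence assumptions justify Fubini, so I may swap the $g$-integration with the $h_i$-integrations as needed.

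Next, using the $G_F$-invariance of $\Theta$ in the $G$-argument and the cuspidality of $f_1, f_2$, I would collapse the double sum $\sum_{x_1, x_2 \in X_F}$ to a single generic $G_F$-orbit. Concretely, after changing variables $s := h_1 h_2^{-1}$ and using left $G_F$-invariance of $f_1$ to unfold the $h_1$-integration from $G_F \backslash G(\A)$ to $G(\A)$, all non-generic $G_F$-orbit contributions vanish by cuspidality, since their contributions can be rewritten as constant-term integrals of the $f_i$ along proper parabolic subgroups. This is the classical \emph{theta integral unfolding} in the integrable range. What remains is
\begin{equation*}
\int_{G(\A)} \langle \omega(s)\phi_1, \phi_2 \rangle \, \langle \pi(s) f_1, f_2 \rangle \, \mathrm{d}s = \int_{G(\A)} \Psi_{\phi_1,\phi_2}(s) \, \langle \pi(s) f_1, f_2 \rangle \, \mathrm{d}s,
\end{equation*}
which equals $\langle \pi(\Psi_{\phi_1,\phi_2})(f_1), f_2 \rangle$ by the integrated form of $\pi$ applied to an $L^1$-function on $G(\A)$, as in (\ref{eq: integrated-form}).

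The hard part is the collapse step: one must justify the orbit decomposition of $\sum_{x \in X_F}$ rigorously, verify absolute convergence of every intermediate rearrangement (delivered by the $L^1$-hypothesis on $\omega|_G$ together with square-integrability of the cusp forms $f_i$), and show that cuspidality annihilates every non-generic orbit contribution. Once these analytic and orbit-theoretic points are settled, the rest of the argument is essentially bookkeeping, and the formula (\ref{RIPF-1}) drops out of the matrix-coefficient recognition in the final step.
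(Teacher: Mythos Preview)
The paper does not give its own proof of this theorem: immediately after the statement it simply cites Rallis \cite[Prop.~1.1 and Remark~1.2]{Rallis-84} for ortho-symplectic pairs and Li \cite[Theorem~2.1]{Li-92} in general. The result is being used as known input for the reformulation in Proposition~\ref{rallisasisom}, so there is no in-paper argument to compare your sketch against.

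That said, your outline has a genuine gap relative to the Rallis--Li argument. After you substitute the theta kernels you carry a triple integral over $[G']\times[G]\times[G]$, and everything you describe thereafter (the change of variable $s=h_1h_2^{-1}$, the $G_F$-orbit decomposition of the $x$-sums, the appeal to cuspidality) concerns the $G$-variables only. You never say what happens to the integral over $G'_F\backslash G'(\A)$; it silently disappears between your second displayed expression and your third. In the actual proof that step is the whole point: one recognises $\Theta(g,h_1;\phi_1)\overline{\Theta(g,h_2;\phi_2)}$ as the theta kernel attached to $\phi_1\otimes\overline{\phi_2}$ for the doubled pair, and then the integral over $[G']$ is evaluated directly in the convergent range (this is where assumption~(1) enters), yielding an expression in the matrix coefficient $\Psi_{\phi_1,\phi_2}$ of $\omega$. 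Only after that does one unfold on the $G$-side. Your proposed mechanism, in which cuspidality of the $f_i$ kills ``non-generic'' $G_F$-orbit contributions via constant-term integrals, is not how the argument runs; cuspidality guarantees that the $f_i$ lie in $L^2_0([G])$ but is not used to annihilate orbit terms in the unfolding.
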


This is proven by Rallis in \cite[Prop 1.1 and Remark 1.2]{Rallis-84} for the ortho-symplectic dual pairs. For general dual pairs, see Li \cite[Theorem 2.1]{Li-92}.  

We remark that (\ref{RIPF-1}) equals 
$$
\int_{G(\A)} \langle \omega(g)(\phi_1), \phi_2 \rangle \langle \pi(g)(f_1),f_2 \rangle dg.$$
If  $f_1,f_2,\phi_1,\phi_2$ are factorizable then we can express the above as a product
\begin{equation} \label{RIPF-2} \prod_{v} \int_{G_v} \langle \omega_v(s)(\phi_1),\phi_2 \rangle  \langle \pi_v(s)(f_1),f_2 \rangle ds
\end{equation}
where the product runs over the places $v$ of $F$ and $\omega_v$, $\pi_v$ are the local factors of $\omega$, $\pi$ respectively.
These local integrals, which lie at the heart of Li's method and our paper, can be expressed in terms of $L$-functions of the local representations (see e.g. \cite[Sec. 3]{Li-92}) and some authors reserve the name ``Rallis inner product formula'' for this ultimate expression in terms of $L$-functions.

\subsection{Reformulation of Rallis inner product formula} 
Assume that $(G',G)$ is in the stable range with $G$ the smaller group. Assume that $(G',G)$ is not the pair $(O_{2n,2n},Sp_{2n})$. Let  $\T_\A$ be the $(C^*(G'(\A)), C^*(G(\A)))$-correspondence described in our Theorem \ref{main-result-three}. Note that the space $S(X_\A)$ is densely contained in $\T_\A$. 

Let $\pi$ be a cuspidal automorphic representation of $G(\A)$. Assume that for any cuspidal automorphic form $f \in V_{\pi}$, the theta-kernel lift $\theta_{\phi}(f)$ is {\em square-integrable} for all $\phi \in S(X_\A)$. Then we can define the map 
\begin{equation} \label{intertwinerZ} \mathcal{Z} : \T_\A \otimes_{C^*(G(\A))} V_{\pi} \longrightarrow L^2(G'_F \backslash G'(\A)), \qquad  \mathcal{Z} (\phi \otimes f ) =  \theta_{\phi}(f).
\end{equation}
It is easy to see that this map is well-defined and is $G'(\A)$-equivariant. 

Given $\phi_1, \phi_2 \in S(X_\A)$ and $f_1,f_2 \in V_{\pi}$, we have, by construction, 
\begin{equation} \label{LHS} \big \langle \phi_1 \otimes f_1, \phi_2 \otimes f_2 \big \rangle := \big \langle \pi(\Psi_{\phi_1,\phi_2})(f_1), f_2 \big \rangle
\end{equation}
where $\Psi_{\phi_1,\phi_2}$ is the matrix coefficient of the oscillator representation. Therefore, asking for the intertwiner 
$\mathcal{Z}$ to be an isometry amounts to asking for the equality
\begin{equation} \label{RIPF_alt}
\big \langle \pi(\Psi_{\phi_1,\phi_2})(f_1), f_2 \big  \rangle = 
\big \langle \theta_{\phi_2}(f_1), \theta_{\phi_2}(f_2)  \big \rangle\subrangle{L^2(G'_F \backslash G'(\A))}, 
\end{equation}
which is precisely the Rallis inner product formula.

\begin{proposition} 
\label{rallisasisom}
Let $(G',G)$ is in the stable range with $G$ the smaller group. Assume that $(G',G)$ is not the pair $(O_{2n,2n},Sp_{2n})$. Let  $\T_\A$ be the $(C^*(G'(\A)), C^*(G(\A)))$-correspondence described in our Theorem \ref{main-result-three}. The Rallis inner product formula stated in Theorem \ref{RIPF} is equivalent to the intertwiner $\mathcal{Z}$ introduced in (\ref{intertwinerZ}) being an isometry.
\end{proposition}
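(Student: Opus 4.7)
The plan is to observe that, once the two inner products at play are written out on elementary tensors, Rallis' formula is literally the identity expressing that $\mathcal{Z}$ preserves inner products on a dense subspace; the rest of the equivalence is then routine sesquilinear/continuity bookkeeping.

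First I would unwind the $C^*(G(\A))$-valued inner product on $\T_\A$. By \eqref{right-inner-product} globalised via Theorem \ref{main-result-three}, and using that in the stable range the relevant matrix coefficients are $L^1$ on $G(\A)$ (Lemma \ref{L1-exception} applied at each place together with the restricted tensor product construction), one has
\[
\langle \phi_1, \phi_2\rangle_{G(\A)}(g) \;=\; \langle \phi_1, \omega(g)\phi_2\rangle \;=\; \Psi_{\phi_1,\phi_2}(g)
\]
for $\phi_1,\phi_2 \in S(X_\A)$. Applying the internal tensor product formula \eqref{localisation-inner-product} then gives, for $f_1,f_2 \in V_\pi$,
\[
\langle \phi_1 \otimes f_1, \phi_2 \otimes f_2\rangle_{\T_\A \otimes V_\pi} \;=\; \langle f_1, \pi(\Psi_{\phi_1,\phi_2}) f_2\rangle_{V_\pi},
\]
which, via the adjoint identity $\pi(\Psi_{\phi_1,\phi_2})^* = \pi(\Psi_{\phi_2,\phi_1})$, is the right-hand side \eqref{RIPF-1} of Theorem \ref{RIPF}. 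On the target side, by construction
\[
\langle \mathcal{Z}(\phi_1 \otimes f_1), \mathcal{Z}(\phi_2 \otimes f_2)\rangle_{L^2(G'_F \backslash G'(\A))} \;=\; \langle \theta_{\phi_1}(f_1), \theta_{\phi_2}(f_2)\rangle_{L^2},
\]
which is the left-hand side \eqref{norm-lift}. Equality of these two expressions for all Schwartz-vector tensors is simultaneously Rallis' formula and the inner-product-preservation condition for $\mathcal{Z}$ on a dense subspace. The forward implication then follows by sesquilinear extension to finite sums of elementary tensors, using the density statement of Lemma \ref{lem: dense subspace}; the reverse implication is immediate by restricting the isometry condition to pairs of such elementary tensors.

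The only preliminary point I would need to check is well-definedness of $\mathcal{Z}$ on the balanced tensor product over $C^*(G(\A))$: the prescription $\phi \otimes f \mapsto \theta_\phi(f)$ must descend modulo the balancing relation $\phi \cdot b \otimes f = \phi \otimes \pi(b) f$ for $b$ in a dense subalgebra of $C^*(G(\A))$. This is a short calculation: a change of variable $h \mapsto hs$ in the inner integral of \eqref{theta-kernel} combined with the left $G_F$-invariance of the quotient measure on $G_F \backslash G(\A)$ gives $\theta_{\omega(s^{-1})\phi}(f) = \theta_\phi(\pi(s) f)$, and integrating against $b(s)$ yields the required balancing identity. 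Beyond this, the proof has no substantial obstacle; the main care lies in keeping track of complex conjugates and adjoints so that the two sides of the equivalence match verbatim rather than up to conjugation, which is why the argument is only a few lines once the set-up of Theorem \ref{main-result-three} is in place.
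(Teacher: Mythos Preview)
Your proposal is correct and follows essentially the same route as the paper: the paper's argument (given in the paragraph preceding the proposition) simply observes that the internal-tensor-product inner product on elementary tensors is by construction $\langle \pi(\Psi_{\phi_1,\phi_2})f_1,f_2\rangle$, so that $\mathcal{Z}$ being an isometry on the dense span of such tensors is literally the Rallis identity \eqref{RIPF_alt}. Your write-up adds the well-definedness check for $\mathcal{Z}$ and the density/sesquilinear extension step, both of which the paper dismisses with ``it is easy to see'' and ``by construction''; otherwise the two arguments coincide.
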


Note that Theorem \ref{main-result-three} tells us that $\T_\A \otimes_{C^*(G(\A))} V_{\pi} \simeq \theta^{\mathrm{abs}}(\pi^*)$ where $\theta^{\mathrm{abs}}$ is the (abstract) global theta lifting as in Proposition \ref{abstract-equals-kernel}.


\end{document}